\newcommand{\px}[1][x]{\partial_{#1}}
\newcommand{\pt}[1][t]{\partial_{#1}}
\newcommand{\dx}[1][x]{\,{\rm d}#1}
\newcommand{\mfrac}[1][2]{\frac{1}{2}}
\newtheorem{example}{Example}[section]
\newtheorem{remark}{Remark}[section]
\title{Efficient multistep methods for tempered fractional calculus: Algorithms and simulations
\thanks{This work was supported by the National Natural Science Foundation of China
(No. 11671265), the science challenge project (No. TZ2018001),
 ARC Discovery Project DP150103675, and
 the MURI/ARO on ``Fractional PDEs for Conservation Laws and Beyond: Theory, Numerics and  Applications  (W911NF-15-1-0562)''.}
}
\author{Ling Guo\thanks{Department of Mathematics, Shanghai Normal University, Shanghai, China
(lguo@shnu.edu.cn)}
\and Fanhai Zeng\thanks{School  of  Mathematical   Sciences,
Queensland   University  of  Technology,   Brisbane,   QLD 4001, Australia
(fanhaiz@foxmail.com).}
\and Ian Turner$^{\ddag,}$\thanks{Australian Research Council Centre of Excellence for Mathematical and Statistical Frontiers, Queensland University of
Technology, Brisbane, QLD 4001, Australia (i.turner@qut.edu.au).}
\and Kevin Burrage$^{\ddag,}$\thanks{Visiting Professor, Department of  Computer  Science,     University  of
Oxford, OXI 3QD, UK (kevin.burrage@qut.edu.au).}
\and George Em Karniadakis\thanks{Division of Applied Mathematics, Brown University, Providence RI, 02912 (george\_karniadakis@brown.edu).}
}
\begin{document}

\maketitle

\begin{abstract}
In this work, we extend the fractional linear multistep methods in [C. Lubich, SIAM J. Math. Anal., 17 (1986), pp.704--719] to the tempered fractional integral and derivative operators in the sense that the tempered fractional derivative operator is interpreted in terms of the Hadamard finite-part integral. We develop two fast methods, Fast Method I and Fast Method II, with linear complexity to calculate the discrete convolution for the approximation  of the (tempered) fractional operator. Fast Method I is based on a local approximation for the contour integral that represents the convolution weight. Fast Method II is based on a globally uniform approximation of the trapezoidal rule for the integral on the real line. Both methods are efficient, but numerical experimentation reveals that Fast Method II outperforms Fast Method I in terms of accuracy, efficiency, and coding simplicity. The memory requirement and computational cost of Fast Method II are $O(Q)$ and $O(Qn_T)$, respectively, where $n_T$ is the number of the final time steps and $Q$ is the number of  quadrature points used in the trapezoidal rule.  The effectiveness of the fast methods is verified through a series of numerical examples for long-time integration, including a numerical study of a fractional reaction-diffusion model.
\end{abstract}

\begin{keywords}
fractional linear multistep method, fast convolution, (tempered) fractional integral and derivative, fractional activator-inhibitor system, fractional Brusselator model.
\end{keywords}

\begin{AMS}
26A33, 65M06, 65M12, 65M15, 35R11
\end{AMS}



\section{Introduction}\label{sec1}
Fractional calculus is emerging as a powerful tool to model various physical processes involving  anomalous diffusion. Under the framework of the
continuous time random walks (CTRWs) model, the fractional Fokker-Planck and Klein-Kramers equations \cite{MetKla00} are derived
with power law waiting time distribution, assuming the particles may exhibit long waiting time. However, for practical physical processes, it is necessary to make the waiting time finite,
for example, the biological particles moving in viscous cytoplasm and displaying trapped dynamical behavior must have finite lifetime. This leads to the tempered Fokker-Planck equation corresponding to the CTRWs model with a tempered power law waiting time distribution \cite{MGW07,Gajda10}. For more applications of tempered fractional calculus and differential equations in poroelasticity, ground water hydrology and geophysical flows,  see \cite{Hanyga01,Cartea07,Meerschaert08,Meerschaert12,Meerschaert14}.

The aim of this paper is to develop   fast and memory-saving methods for discretizing the (tempered) fractional integral of the following form
\begin{equation}\label{eq:fracint}
\frac{1}{\Gamma(\alpha)}\int_{0}^t(t-s)^{\alpha-1}e^{-\sigma(t-s)}{u(s)}\dx[s],
{\quad} \alpha<1,\sigma\geq 0.
\end{equation}
If $\alpha<0$, the above integral is interpreted in terms of the Hadamard finite-part integral,
which is
equivalent to the  (tempered) fractional derivative of order $-\alpha$, see Lemma \ref{lem3-1}.

When $\sigma=0$, \eqref{eq:fracint} reduces to the Riemann--Liouville (RL) fractional integral of
order $\alpha$ ($\alpha>0$) or the RL fractional derivative of order $-\alpha$ ($\alpha<0$).
Thus, the method developed in the present paper is a general framework for (tempered) fractional calculus.
Therefore, we will mainly focus on the fast computation of the tempered fractional integral \eqref{eq:fracint} for $\alpha<0$.
Recently, some numerical methods have been developed to solve the tempered fractional differential equations via finite difference methods, see \cite{Cartea07,Dengzhaowu,Lidengzhao15,Marom09}.
However, fast and memory-saving methods for tempered fractional differential equations
are limited.

In this paper, we extend Lubich's fractional linear multistep methods (FLMMs) (see \cite{Lub86}) to discretize the tempered fractional integral and derivative operators, which yields the discrete convolution as
\begin{equation}\label{eq:frac-convolution}
\tau^{-\alpha}\sum_{j=0}^n\omega^{(\alpha,\sigma)}_{n-k}u_k, \quad 0\leq n\leq n_T,
\end{equation}
where $\tau$ is the time step size, $n_T$ is a positive integer, $\alpha$ is real,
$\sigma\geq 0$, $\omega^{(\alpha,\sigma)}_{k}$ are the
convolution quadrature weights, and $u_k$ can be any number; see Section
\ref{sec-3} for details.

The  discrete convolution \eqref{eq:frac-convolution} requires $O(n_T)$ active memory and $O(n_T^2)$ arithmetic  operations by direct  computation. Thus, the direct calculation of \eqref{eq:frac-convolution} becomes computationally expensive when it is applied to discretize  time-fractional partial differential equations (PDEs). Recently, some progress has been made to reduce the memory requirement and computational cost of the discrete convolution for approximating
the RL fractional operators \cite{BanjaiLopez18,JiangZZZ16,JingLi10,LopLubSch08,McLean12,YuPK16,ZengTBK2018}.
For the fast methods based on piecewise polynomial interpolation, the kernel function in the
fractional operators is approximated by the sum-of-exponentials, that is to say,
the quadrature weights $\omega^{(\alpha,\sigma)}_{n}$ in \eqref{eq:frac-convolution} originate from interpolation; see \cite{JiangZZZ16,JingLi10,YuPK16,ZengTB2018}.

In this work, we develop two fast methods for calculating \eqref{eq:frac-convolution}
with the quadrature weights $\omega^{(\alpha,\sigma)}_{n}$ derived from generating functions,
where the methods in  \cite{JiangZZZ16,JingLi10,YuPK16,ZengTB2018} cannot apply here.
The basic idea is to
re-express the weight $\omega^{(\alpha,\sigma)}_{n}$ as an integral form.
In the \emph{first method}, we express $\omega^{(\alpha,\sigma)}_{n}$ as a contour integral
of the form
\begin{equation}\label{fast-contour-int}
\omega_n^{(\alpha,\sigma)} = \frac{\tau^{1+\alpha} e^{-n\sigma\tau}}{2\pi i}\int_{\mathcal{C}}\lambda^{\alpha}(1-\lambda\tau)^{-1-n}
F_{\omega}(\lambda)\dx[\lambda],
\end{equation}
then a suitable contour quadrature (such as  Talbot, hyperbolic, or parabolic contour quadrature)
is used to discretize \eqref{fast-contour-int}. The case of $\sigma=0$ has been
investigated in \cite{BanjaiLopez18,LopLubSch08,ZengTBK2018}.
The detailed derivation of \eqref{fast-contour-int} is illustrated in  \cite{LopLubSch08,ZengTBK2018}.
In this work, we extend the method in \cite{ZengTBK2018} to the tempered fractional calculus
($\sigma>0$) to obtain Fast Method I, in which the Talbot contour quadrature
used in  \cite{ZengTBK2018} is also applied here.

The \emph{second method} is inspired by   \cite{BanjaiLopez18}, where
a Hankel contour beginning and ending in the left half of the complex plane
is applied to transform the contour integral into an integral on the half line,
which is discretized by a multi-domain Gauss quadrature, yielding a uniform approximation.
We can also choose the same  Hankel contour as in  \cite{BanjaiLopez18} to
express the quadrature weight $\omega^{(\alpha,\sigma)}_{n}$  defined by \eqref{fast-contour-int}
as an integral on the half line
\begin{equation}\label{fast-real-int}
 \omega_n^{(\alpha,\sigma)}
=  {\tau^{1+\alpha} e^{-n\sigma\tau}} \frac{\sin(\alpha \pi)}{\pi}
\int_0^{\infty}\lambda^{\alpha}(1+\lambda\tau)^{-1-n}
F_{\omega}(-\lambda)\dx[\lambda].
\end{equation}
The above integral   is further transformed
into  an integral on the real line by letting
$\lambda=\exp(x)$; see \eqref{real-int-2}.
Finally, the exponentially convergent trapezoidal rule \cite{TrefethenWeidman14}
is applied to obtain a uniform  approximation for \eqref{fast-real-int},
which leads to Fast Method II.

%


We list  the main contributions of this work as follows.
\begin{itemize}
  \item We extend the fractional linear multistep methods (FLMMs) proposed in \cite{Lub86} to both the tempered fractional integral and derivative operators, where the tempered fractional operators are interpreted in terms of the Hadamard finite-part integral,
      which significantly simplifies the results in \cite{ChenDeng15}.

  \item  We develop two new fast methods, Fast Methods I and II, to calculate the discrete convolutions to the approximation of the (tempered) fractional integral and derivative operators. Fast Method II
      outperforms Fast Method I in terms of accuracy, efficiency, and coding simplicity,
      and has the following  advantages.
      \begin{itemize}
        \item[{(a)}] The time interval is not divided into  exponentially increasing subintervals, which makes the implementation of Fast Method II much easier than Fast Method I and
            the existing fast methods in \cite{BanLopSch17,SchLopLub06,ZengTBK2018}.
        \item[{(b)}] Only real operations are performed and the recurrence  relation \eqref{real-int-7} used in Fast Method II is stable.
        \item[{(c)}] Fast  Method II also works very well for fractional orders greater than one.
        \item[{(d)}] Using the same number of quadrature points, Fast Method II achieves higher accuracy than Fast Method I.
      \end{itemize}
\end{itemize}

We emphasize that Fast Method I still works well.
The obvious disadvantage of Fast Method I is that its implementation is
more complicated than Fast Method II. We compare the two fast methods to show
the superiority of Fast Method II over Fast Method I. We focus
on the use of Fast Method II to solve fractional models
through numerical simulations.

This paper is organized as follows.
In Section \ref{sec-2}, we prove that the
tempered fractional derivative can be interpreted in terms of the
Hadamard finite-part integral. This interpretation helps us to
extend Lubich's FLMMs
to both the tempered fractional integral and derivative operators
directly, see Section \ref{sec-3}. In Section \ref{sec-4}, we propose two
fast methods  for approximating the discrete convolution in the considered  FLMM for the tempered
fractional operator, and we also make a comparison between these two methods.
Fast Method II is applied to solve tempered fractional ordinary
differential equations and a coupled system of nonlinear time-fractional
activator-inhibitor equations in Section \ref{sec:numerical} before
the conclusion is given in the last section.


\section{Preliminaries} \label{sec-2}
In this section, we  introduce  definitions of fractional integrals
and derivatives, and the properties that will be used in this paper.

\begin{definition}[RL fractional integral]
The RL  fractional integral operator $I^{\alpha}_{0,t}u(t)$
of order $\alpha \,(\alpha\geq 0)$  is defined by
\begin{equation}\label{eq:fint}
I^{\alpha}_{0,t}u(t)
=\frac{1}{\Gamma(\alpha)}\int_{0}^t(t-s)^{\alpha-1}{u(s)}\dx[s].
\end{equation}
\end{definition}

\begin{definition}[RL fractional derivative]
The  RL  fractional derivative operator ${}_{RL}D^{\alpha}_{0,t}$
of order $\alpha$  is defined by
\begin{equation} \label{RL-L}
{}_{RL}D_{0,t}^{\alpha}u(t)=\frac{1}{\Gamma(m-\alpha)}
\left[\frac{{\dx[]}^m}{\dx[t]^m}\int_{0}^{t}(t-s)^{m-\alpha-1}u(s)\dx[s]\right],
\end{equation}
where $m-1<\alpha\leq m$, $m$ is a positive integer.
\end{definition}


\begin{definition}[Tempered factional   integral]
The tempered fractional integral operator $I^{\sigma,\alpha}_{0,t}$
of order $\alpha \,(\alpha,\sigma\geq 0)$  is defined by
\begin{equation}\label{eq:sfint}\begin{aligned}
I^{\sigma,\alpha}_{0,t}u(t)
=\frac{1}{\Gamma(\alpha)}\int_{0}^t(t-s)^{\alpha-1}e^{-\sigma(t-s)}{u(s)}\dx[s].
\end{aligned}\end{equation}
\end{definition}

\begin{definition}[Tempered fractional derivative]
The tempered fractional  \\ derivative operator $D^{\sigma,\alpha}_{0,t}$
of order $\alpha>0$  is defined by
\begin{equation} \label{sRL-L}\begin{aligned}
D^{\sigma,\alpha}_{0,t}u(t)=&(\pt + \sigma)^mI^{\sigma,m-\alpha}_{0,t}u(t) \\
=&\left(\pt+\sigma\right)^m\left[\frac{1}{\Gamma(m-\alpha)}\int_{0}^t(t-s)^{m-\alpha-1}
e^{-\sigma(t-s)}{u(s)}\dx[s]\right],
\end{aligned}\end{equation}
where $\sigma\geq 0$, $\left(\pt+\sigma\right)^m
=\sum_{k=0}^m  \binom{m}{k}\pt^k\sigma^{m-k}$, $m-1<\alpha\leq m$, $m$ is a positive integer.
\end{definition}

Next, we introduce the Hadamard finite-part integral, which plays a crucial role
in the numerical approximation of the (tempered) fractional derivative operator.

\subsection{Fractional derivatives in the Hadamard sense}
In \cite{IP1999,SamKM-B93},  the RL fractional derivative operator
is proved to be equivalent to a Hadamard finite-part integral.
In this section, we extend this proof to the tempered fractional calculus.
\begin{definition}[Hadamard finite-part integral, see \cite{IP1999,SamKM-B93}]\label{DefFPI}
Let a function $f(x)$ be integrated on an interval $(\epsilon, A)$ for any $A>0$ and $0<\epsilon<A$.
The  function $f(x)$ is said to possess the Hadamard property at the point $x=0$ if there exist constants $a_k,b_0$ and $\lambda_k>0$ such that
\begin{equation} \label{fpi}
 \begin{aligned}
\int_{\epsilon}^{A}f(x)\dx[x]=\sum_{k=1}^Na_k\epsilon^{-\lambda_k}
+ b_0\ln{\frac{1}{\epsilon}} + J_0(\epsilon),
 \end{aligned}
\end{equation}
where $\lim_{\epsilon\to0}J_0(\epsilon)$ exists and is finite, which is also denoted by
\begin{equation} \label{fpi-2}
\begin{aligned}
P.V.\int_{0}^{A}f(x)\dx=\lim_{\epsilon\to0}J_0(\epsilon).
\end{aligned}
\end{equation}
\end{definition}


\begin{lemma}[see {\cite[p. 112]{SamKM-B93}}]\label{lem:1-5}
The RL fractional derivative ${}_{RL}D_{0,t}^{\alpha}u(t)$,
$\alpha>0,\alpha\neq 1,2,...$, is equivalent to the following integral in the Hadamard sense,
that is
\begin{equation} \label{h1-2}
 \begin{aligned}
{}_{RL}D_{0,t}^{\alpha}u(t)=\frac{1}{\Gamma(-\alpha)}
P.V.\int_{0}^{t}(t-s)^{-\alpha-1}u(s)\dx[s].
 \end{aligned}
\end{equation}
\end{lemma}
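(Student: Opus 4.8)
The statement is classical (it is quoted here from \cite[p.~112]{SamKM-B93}), but here is how I would prove it from scratch. The plan is to regularize the singular integral on the right-hand side and to show that its Hadamard finite part reproduces exactly the $m$-fold classical derivative that defines ${}_{RL}D_{0,t}^{\alpha}u$ in \eqref{RL-L}. Throughout I fix the integer $m$ with $m-1<\alpha<m$ (the hypothesis $\alpha\neq1,2,\dots$ guaranteeing the strict inequality) and assume $u\in C^m[0,t]$, so that both operators are well defined.

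First I would move the singularity to the origin by the substitution $r=t-s$, writing the candidate integrand as $r^{-\alpha-1}u(t-r)$; since $-\alpha-1<-1$ this kernel is non-integrable at $r=0$, so the integral must be read through Definition \ref{DefFPI}. Inserting the cutoff $\int_{\epsilon}^{t}r^{-\alpha-1}u(t-r)\dx[r]$ and Taylor expanding $u(t-r)=\sum_{j=0}^{m-1}\frac{(-r)^{j}}{j!}u^{(j)}(t)+R_m(t,r)$ with $R_m(t,r)=O(r^m)$, the power-law terms integrate to $\frac{t^{j-\alpha}-\epsilon^{j-\alpha}}{j-\alpha}$. Because every exponent satisfies $j-\alpha<0$ for $0\le j\le m-1$, each $\epsilon^{j-\alpha}$ is a divergent contribution of the form $a_k\epsilon^{-\lambda_k}$ with $\lambda_k=\alpha-j>0$, and no logarithmic term $b_0\ln(1/\epsilon)$ appears precisely because $\alpha\notin\mathbb{Z}$ forces $j-\alpha\neq0$. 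Meanwhile $r^{-\alpha-1}R_m(t,r)=O(r^{m-\alpha-1})$ with $m-\alpha-1>-1$, so the remainder integral converges as $\epsilon\to0$. Thus the integrand has the Hadamard property and the finite part \eqref{fpi-2} exists.

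To evaluate that finite part and identify $\frac{1}{\Gamma(-\alpha)}$ times it with ${}_{RL}D_{0,t}^{\alpha}u$, I would integrate the cutoff integral by parts $m$ times. Each step raises the kernel exponent by one, produces a finite boundary contribution at $r=t$ carrying a derivative $u^{(j)}(0)$, produces a boundary contribution at $r=\epsilon$ that is purely divergent (no exact $\epsilon^{0}$ piece can arise, again because $\alpha\notin\mathbb{Z}$) and is therefore discarded under the finite-part prescription, and multiplies the surviving integral by a factor $((p-1)-\alpha)^{-1}$. After $m$ steps the exponent reaches $m-\alpha-1>-1$, the accumulated prefactor combines with $\Gamma(-\alpha)$ through $\Gamma(m-\alpha)=\Gamma(-\alpha)\prod_{p=0}^{m-1}(p-\alpha)$ to give exactly $\frac{1}{\Gamma(m-\alpha)}$, and the remaining integral becomes $\int_0^t(t-s)^{m-\alpha-1}u^{(m)}(s)\dx[s]$. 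This recovers the Caputo--Riemann--Liouville decomposition ${}_{RL}D_{0,t}^{\alpha}u=\sum_{j=0}^{m-1}\frac{t^{j-\alpha}}{\Gamma(j-\alpha+1)}u^{(j)}(0)+\frac{1}{\Gamma(m-\alpha)}\int_0^t(t-s)^{m-\alpha-1}u^{(m)}(s)\dx[s]$, which one checks equals $\frac{1}{\Gamma(m-\alpha)}\frac{{\rm d}^m}{{\rm d}t^m}\int_0^t(t-s)^{m-\alpha-1}u(s)\dx[s]$ by carrying the $m$ derivatives through the now-integrable Abel integral with the Leibniz rule.

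The main obstacle I anticipate is the bookkeeping that reconciles the two operations: proving that the divergent boundary terms generated by the integration by parts are precisely those the Hadamard definition instructs us to drop, so that discarding them loses no finite contribution, and reconciling the expansion centred at $s=t$ (natural for the finite part) with the expansion centred at $s=0$ (natural for the $m$-fold differentiation). The hypothesis $\alpha\neq1,2,\dots$ is used twice here --- once to exclude the logarithmic term in \eqref{fpi} and once to keep $\Gamma(-\alpha)$ and the falling-factorial prefactor finite and nonzero. Conceptually the whole identity expresses that the analytic continuation in the order of the fractional-integral kernel coincides with Hadamard's \emph{partie finie}, but I would keep the argument elementary and verify the matching directly.
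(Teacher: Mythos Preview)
The paper does not supply its own proof of this lemma; it simply cites \cite[p.~112]{SamKM-B93} and moves on to use the result in the proof of Lemma~\ref{lem3-1}. So there is no in-paper argument to compare against. Your sketch is the standard one found in the cited reference: regularize by cutting off at $\epsilon$, Taylor expand to isolate the divergent powers $\epsilon^{j-\alpha}$ (with $\alpha\notin\mathbb{Z}$ ruling out the logarithm), and then integrate by parts $m$ times, absorbing the factors $(p-\alpha)^{-1}$ into $\Gamma(-\alpha)$ via $\Gamma(m-\alpha)=\Gamma(-\alpha)\prod_{p=0}^{m-1}(p-\alpha)$ to land on the $I^{m-\alpha}_{0,t}u^{(m)}$ integral plus the initial-value terms. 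One small wording point: the $\epsilon$-boundary contributions after the $k$-th integration by parts are not \emph{purely} divergent --- once you Taylor expand $u^{(k-1)}(t-\epsilon)$ some pieces carry positive powers of $\epsilon$ and vanish --- but your real claim, that no $\epsilon^{0}$ piece ever appears because the exponents $k+j-\alpha$ are never integers, is correct and is all you need.
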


\begin{lemma}\label{lem3-1}
The tempered fractional derivative of order $\alpha>0$ is equivalent to the following
Hadamard finite-part integral
\begin{equation} \label{sec3:eq-1}
 \begin{aligned}
D^{\sigma,\alpha}_{0,t}u(t) =\frac{1}{\Gamma(-\alpha)}
P.V.\int_{0}^{t}(t-s)^{-\alpha-1}e^{-\sigma(t-s)}u(s)\dx[s].
 \end{aligned}
\end{equation}
\end{lemma}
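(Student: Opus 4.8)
The plan is to reduce the tempered case to the already-established RL case of Lemma~\ref{lem:1-5} by absorbing the tempering factor into the unknown function. Introduce the smooth modification $v(s):=e^{\sigma s}u(s)$. The whole argument then rests on two elementary conjugation identities. First, factoring $e^{-\sigma(t-s)}=e^{-\sigma t}e^{\sigma s}$ in the tempered integral \eqref{eq:sfint} gives
\[
I^{\sigma,\beta}_{0,t}u(t)=e^{-\sigma t}\,I^{\beta}_{0,t}v(t),
\]
so the tempered RL integral is simply the ordinary RL integral of $v$, rescaled by $e^{-\sigma t}$. Second, I would establish by induction on $m$ the operator identity
\[
(\pt+\sigma)^m\big[e^{-\sigma t}g(t)\big]=e^{-\sigma t}\,\pt^m g(t),
\]
whose base case $m=1$ is the cancellation $\pt(e^{-\sigma t}g)+\sigma e^{-\sigma t}g=e^{-\sigma t}g'$ and whose inductive step follows by applying the base case to $\pt^{k}g$.

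Chaining the two identities through the definition \eqref{sRL-L} with $g(t)=I^{m-\alpha}_{0,t}v(t)$ yields
\[
D^{\sigma,\alpha}_{0,t}u(t)=(\pt+\sigma)^m\big[e^{-\sigma t}I^{m-\alpha}_{0,t}v(t)\big]=e^{-\sigma t}\,\pt^m I^{m-\alpha}_{0,t}v(t)=e^{-\sigma t}\,{}_{RL}D^{\alpha}_{0,t}v(t),
\]
the last equality being just the definition \eqref{RL-L} read as ${}_{RL}D^{\alpha}_{0,t}=\pt^m I^{m-\alpha}_{0,t}$. Thus the tempered derivative of $u$ equals the RL derivative of $v$ conjugated by $e^{-\sigma t}$. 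It then remains to apply Lemma~\ref{lem:1-5} directly to $v$ — legitimate for $\alpha\neq 1,2,\dots$ since $v$ inherits the regularity of $u$ — to get ${}_{RL}D^{\alpha}_{0,t}v(t)=\frac{1}{\Gamma(-\alpha)}P.V.\!\int_0^t(t-s)^{-\alpha-1}v(s)\dx[s]$, and then to pull the constant-in-$s$ factor $e^{-\sigma t}$ back inside, recombining $e^{-\sigma t}e^{\sigma s}=e^{-\sigma(t-s)}$ to obtain \eqref{sec3:eq-1}.

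I expect the only genuinely delicate point to be this last step: checking that multiplication by the constant $e^{-\sigma t}$ and by the smooth factor $e^{\sigma s}$ is compatible with the finite-part regularization. Since the finite part is the limit $\lim_{\epsilon\to0}J_0(\epsilon)$ of the regular term in the expansion \eqref{fpi}, multiplying the integrand by the $s$-independent constant $e^{-\sigma t}$ merely rescales all of the coefficients $a_k,b_0$ and hence the limit; moreover the endpoint singularity is controlled entirely by $(t-s)^{-\alpha-1}$, so the analytic, nonvanishing factor $e^{\sigma s}$ does not alter the Hadamard property or the value of the finite part. Everything else is routine operator algebra, so this observation is where I would focus the rigor.
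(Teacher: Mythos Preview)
Your proposal is correct and follows the same overall strategy as the paper: both establish the conjugation identity $D^{\sigma,\alpha}_{0,t}u(t)=e^{-\sigma t}\,{}_{RL}D^{\alpha}_{0,t}\big[e^{\sigma\cdot}u(\cdot)\big](t)$ and then invoke Lemma~\ref{lem:1-5}. The difference lies in how the operator identity $(\pt+\sigma)^m\big[e^{-\sigma t}g(t)\big]=e^{-\sigma t}\pt^m g(t)$ is obtained. The paper expands $(\pt+\sigma)^m=\sum_{k}\binom{m}{k}\sigma^{m-k}\pt^k$, applies the Leibniz rule to each $\pt^k(e^{-\sigma t}g)$, and then collapses the resulting double sum via the combinatorial identity $\sum_{k=j}^{m}\binom{m}{k}\binom{k}{j}(-1)^{k-j}=\delta_{jm}$, proved separately. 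Your inductive argument bypasses this computation entirely: the base case $(\pt+\sigma)(e^{-\sigma t}g)=e^{-\sigma t}g'$ already encodes the cancellation, and iteration is immediate. This is shorter and arguably more transparent; the paper's route has the minor advantage of making the combinatorial structure explicit. Your remark on compatibility of the finite-part regularization with the smooth factor $e^{\sigma s}$ is also a point the paper passes over in silence.
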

\begin{proof}
Let $f(t)=e^{-\sigma t}$ and
$g(t)=\frac{1}{\Gamma(m-\alpha)}\int_{0}^t(t-s)^{m-\alpha-1} e^{\sigma{s}}{u(s)}\dx[s]$.
Then
\begin{eqnarray}
D^{\sigma,\alpha}_{0,t}u(t)&=&(\pt+\sigma)^m(fg)
=\sum_{k=0}^m\binom{m}{k}\sigma^{m-k}\pt^k(fg)\nonumber\\
&=&\sum_{k=0}^m\binom{m}{k}\sigma^{m-k}  \sum_{j=0}^k\binom{k}{j}f^{(k-j)}(t)g^{(j)}(t)\nonumber\\
&=&f(t)\sum_{k=0}^m\sum_{j=0}^k\binom{m}{k}\binom{k}{j}\sigma^{m-j} (-1)^{k-j}g^{(j)}(t)\nonumber\\
&=&f(t)\sum_{j=0}^m\sigma^{m-j}g^{(j)}(t)\sum_{k=j}^m\binom{m}{k}\binom{k}{j} (-1)^{k-j},\label{sec3:eq-2}
\end{eqnarray}
where we have used $f^{(k)}(t)=(-\sigma)^{k}e^{-\sigma t}=(-\sigma)^{k} f(t)$.

In the following, we will prove that
\begin{equation}\label{sec3:eq-3}
\sum_{k=j}^m\binom{m}{k}\binom{k}{j} (-1)^{k-j}
= \left\{\begin{aligned}
&0,{\qquad} 0\leq j \leq m-1,\\
&1,{\qquad} j = m.
\end{aligned}\right.
\end{equation}
Obviously, one has $\sum_{k=j}^m\binom{m}{k}\binom{k}{j} (-1)^{k-j}=1$  for $j=m$.
For $0\leq j \leq m-1$, we have
\begin{equation}
\begin{aligned}
\sum_{k=j}^m\binom{m}{k}\binom{k}{j}(-1)^{k-j}
=& \sum_{k=j}^m(-1)^{k-j}\frac{m!}{k!(m-k)!}\frac{k!}{(k-j)!j!}\\
=&\frac{m(m-1)\cdots (m-j+1)}{j!}\sum_{k=j}^m\frac{(m-j)!(-1)^{k-j}}{(m-k)!(k-j)!}  \\
=&\frac{m(m-1)\cdots (m-j+1)}{j!}(1-1)^{m-j}=0,{\quad}j<m.
\end{aligned}
\end{equation}

Combining \eqref{sec3:eq-2}  and \eqref{sec3:eq-3} yields
\begin{equation}
 \begin{aligned}
D^{\sigma,\alpha}_{0,t}u(t)
=&f(t)g^{(m)}(t)
=e^{-\sigma t}
\frac{\dx[]^m}{\dx[t]^m}\left[\frac{1}{\Gamma(m-\alpha)}\int_{0}^t(t-s)^{m-\alpha-1} e^{\sigma{s}}{u(s)}\dx[s]\right]\\
=&\frac{e^{-\sigma t}}{\Gamma(-\alpha)}P.V.\int_{0}^t(t-s)^{-\alpha-1} e^{\sigma{s}}{u(s)}\dx[s],
 \end{aligned}
\end{equation}
where Lemma \ref{lem:1-5} is applied. The proof is complete.
\end{proof}


\section{Fractional linear multistep methods (FLMMs)}\label{sec-3}
In this section, we extend Lubich's FLMMs (see \cite{Lub86}) to discretize the tempered fractional integral and derivative
operators. For convenience, we introduce the following notation:
\begin{equation}\label{FLMM}
D_{\tau}^{\alpha,\sigma,\gamma,m,n} u = \tau^{-\alpha}\sum_{k=0}^n\omega^{(\alpha,\sigma)}_{n-k}(u(t_k)-u_0) +
 \tau^{-\alpha}\sum_{k=1}^mw^{(\alpha,\sigma)}_{n,k}(u(t_k)-u_0) ,
\end{equation}
where $\tau$ is the step size, $t_k=k\tau$ is the grid point, $\gamma=(\gamma_1,\gamma_2,\cdots)$,
$\gamma_{j+1}>\gamma_j>0$, and the quadrature
weights $\omega^{(\alpha,\sigma)}_k$  are chosen such that
$D_{\tau}^{\alpha,\sigma,\gamma,m,n} u$ is a stable approximation of
$\left[D^{\sigma,\alpha}_{0,t}(u(t)-u_0)\right]_{t=t_n}$.
When the quadrature weights $\omega^{(\alpha,\sigma)}_k$ are determined, the  starting weights
$w^{(\alpha,\sigma)}_{n,k}(1\leq k \leq m)$ will be chosen such that
$D_{\tau}^{\alpha,\sigma,\gamma,m,n} u=\left[D^{\sigma,\alpha}_{0,t}u(t)\right]_{t=t_n}$ for some
$u(t)=t^{\gamma_j}$, $1\leq j\leq m$.

The convolution quadrature weights $\omega^{(\alpha,\sigma)}_k$ in \eqref{FLMM} can be given by the following generating functions; see \cite{Lub86}.
\begin{itemize}
  \item The  fractional backward difference formula of order $p$ (FBDF-$p$):
  \begin{equation}\label{FBDF-S}
\omega^{(\alpha,\sigma)}(z) =\left(\sum_{k=1}^p\frac{1}{k}(1-ze^{-\sigma\tau})^k\right)^{\alpha}
=\sum_{k=0}^{\infty}\omega^{(\alpha,\sigma)}_{k}z^k.
\end{equation}

  \item The generalized Newton--Gregory formula of order $p$ (GNGF-$p$)
\begin{equation}\label{GNGF-S}
\omega^{(\alpha,\sigma)}(z) =\left({1-ze^{-\sigma\tau}}\right)^{\alpha}
\sum_{k=1}^p g_{k-1}(1-ze^{-\sigma\tau})^{k-1}
=\sum_{k=0}^{\infty}\omega^{(\alpha,\sigma)}_{k}z^k,
\end{equation}
where $g_0=1$ and $g_1= {\alpha}/{2}$; see  \cite{GalGar08} for $g_k(k=2,3,4,5)$.
  \item The fractional trapezoidal rule
\begin{equation}\label{FTRAP-S}
\omega^{(\alpha,\sigma)}(z)
=\left(\frac{(1-ze^{-\sigma\tau})}{2(1+ze^{-\sigma\tau})}\right)^{\alpha}
=\sum_{k=0}^{\infty}\omega^{(\alpha,\sigma)}_{k}z^k.
\end{equation}
\item  See \eqref{FLMM-S} for other choices of the coefficients $\omega^{(\alpha,\sigma)}_k$.
\end{itemize}

Under suitable conditions, \eqref{FLMM} is a $p$-th order approximation of
$D^{\sigma,\alpha}_{0,t}(u(t)-u_0)$
if the generating function \eqref{FBDF-S} or
\eqref{GNGF-S} is used, and a second-order approximation is derived if
\eqref{FTRAP-S} is applied.

From \cite{Lub86}, we immediately derive the following two theorems.
\begin{theorem}
Let $\alpha\in \mathbb{R}$, $\delta > 0$. Then
for $u(t)=t^{\delta}$, one has
\begin{equation}\label{FLMM-2}
 \left[D^{\sigma,\alpha}_{0,t}u(t)\right]_{t=t_n} =D_{\tau}^{\alpha,\sigma,\gamma,0,n} u
+ O(t_n^{\alpha+\delta-p}\tau^{p}) +  O(t_n^{\alpha-1}\tau^{\delta+1}).
\end{equation}
\end{theorem}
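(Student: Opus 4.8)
The plan is to reduce the tempered estimate to Lubich's untempered estimate through the exponential ``tempering'' substitution, and then to invoke the result of \cite{Lub86} for the resulting power-type data. The starting point is the identity already implicit in the proof of Lemma \ref{lem3-1}: writing $v(t)=e^{\sigma t}u(t)$, one has $D^{\sigma,\alpha}_{0,t}u(t)=e^{-\sigma t}\,\mathcal{D}^{\alpha}_{0,t}v(t)$, where $\mathcal{D}^{\alpha}_{0,t}$ denotes the untempered ($\sigma=0$) operator of the same order $\alpha$ (the RL integral for $\alpha>0$ and the finite-part derivative for $\alpha<0$). For $\alpha>0$ this is just \eqref{eq:sfint} with the factor $e^{-\sigma(t-s)}=e^{-\sigma t}e^{\sigma s}$ pulled outside the integral, and for $\alpha<0$ it is the finite-part identity \eqref{sec3:eq-1} read the same way.

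First I would establish the discrete counterpart. Each of the generating functions \eqref{FBDF-S}--\eqref{FTRAP-S} depends on $\sigma$ only through the replacement $z\mapsto ze^{-\sigma\tau}$, so $\omega^{(\alpha,\sigma)}(z)=\omega^{(\alpha,0)}(ze^{-\sigma\tau})$, and comparing Taylor coefficients gives $\omega^{(\alpha,\sigma)}_k=e^{-k\sigma\tau}\omega^{(\alpha,0)}_k$. Substituting this into \eqref{FLMM} with $m=0$, using $e^{-(n-k)\sigma\tau}=e^{-\sigma t_n}e^{\sigma t_k}$ together with $u_0=v_0=0$ (which holds since $\delta>0$), I obtain $D_\tau^{\alpha,\sigma,\gamma,0,n}u=e^{-\sigma t_n}D_\tau^{\alpha,0,\gamma,0,n}v$. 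Thus the tempered discretization is exactly $e^{-\sigma t_n}$ times the untempered discretization of $v$, mirroring the continuous identity.

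Combining the two identities, the tempered error factorizes as
\[
\left[D^{\sigma,\alpha}_{0,t}u\right]_{t=t_n}-D_\tau^{\alpha,\sigma,\gamma,0,n}u
= e^{-\sigma t_n}\Big(\big[\mathcal{D}^{\alpha}_{0,t}v\big]_{t=t_n}-D_\tau^{\alpha,0,\gamma,0,n}v\Big),
\]
so it remains only to bound the untempered FLMM error for the data $v(t)=e^{\sigma t}t^{\delta}$. When $\sigma=0$ this is precisely the error estimate of \cite{Lub86} for $u(t)=t^\delta$, which already produces the two terms $O(t_n^{\alpha+\delta-p}\tau^p)$ and $O(t_n^{\alpha-1}\tau^{\delta+1})$; this is the sense in which the theorem is ``immediate.'' For $\sigma>0$ I would expand the entire factor $e^{\sigma t}=\sum_{\ell\ge0}\sigma^\ell t^\ell/\ell!$, so that $v=\sum_{\ell\ge0}(\sigma^\ell/\ell!)\,t^{\delta+\ell}$ is a superposition of pure powers. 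Applying Lubich's estimate to each $t^{\delta+\ell}$ contributes $O(t_n^{\alpha+\delta+\ell-p}\tau^p)+O(t_n^{\alpha-1}\tau^{\delta+\ell+1})$; summing against $\sigma^\ell/\ell!$ and reinstating the prefactor $e^{-\sigma t_n}$ collapses the series, the boundary terms being dominated by their $\ell=0$ contribution while the factor $e^{-\sigma t_n}$ cancels the exponential growth $\sum_\ell(\sigma t_n)^\ell/\ell!$ produced by the rising powers of $t_n$ in the interior terms.

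The step I expect to be the main obstacle is this last one: making the term-by-term summation rigorous requires controlling the $\ell$-dependence of the constants in \cite{Lub86} (which involve ratios of Gamma functions such as $\Gamma(\delta+\ell+1)/\Gamma(\delta+\ell+\alpha-p+1)$) uniformly enough that the $\sigma^\ell/\ell!$-weighted series converges and, after multiplication by $e^{-\sigma t_n}$, collapses to the two stated orders without leaving spurious polynomial factors in $t_n$. A cleaner route that avoids the explicit expansion is to appeal directly to Lubich's convolution-quadrature error theory for data of the form $t^\delta g(t)$ with $g$ analytic (here $g(t)=e^{\sigma t}$), which bundles the interior smoothness error and the endpoint-singularity error into exactly these two terms; the factorization above then transfers the bound to the tempered operator verbatim.
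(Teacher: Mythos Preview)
Your approach is correct and is precisely the mechanism the paper has in mind: the paper gives no explicit proof, stating only that the result follows ``immediately'' from \cite{Lub86}, and your exponential-substitution reduction---continuous via Lemma~\ref{lem3-1} and discrete via $\omega^{(\alpha,\sigma)}_k=e^{-k\sigma\tau}\omega^{(\alpha,0)}_k$---is exactly what makes the tempered estimate a corollary of the untempered one. Your option~(b) is the clean route and is what the citation to \cite{Lub86} is meant to invoke: Lubich's error theory already covers data of the form $t^{\beta-1}g(t)$ with $g$ analytic, so the term-by-term summation and the attendant tracking of $\ell$-dependent constants are unnecessary.
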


\begin{theorem} \label{thm:3-2}
Let $(\hat{\rho}, \hat{\sigma})$ denote an implicit
linear multistep method (LMM) which is stable and consistent of order $p$,
i.e., $\hat{\rho}(z)$ and $\hat{\sigma}(z)$ are the characteristic polynomials
of the LMM of order $p$ for the first-order ordinary differential equation.
Assume that the zeros of $\hat{\sigma}(z)$ have absolute value less than 1.
Let   $\omega(z)=\frac{\hat{\sigma}(1/z)}{\hat{\rho}(1/z)}$ and
\begin{equation}\label{FLMM-S}
\omega^{(\alpha,\sigma)}(z)=(\omega(z e^{-\sigma\tau}))^{\alpha}
=\sum_{k=0}^{\infty}\omega^{(\alpha,\sigma)}_kz^k.
\end{equation}
Then, we have
\begin{equation}
\left[D^{\sigma,\alpha}_{0,t}(u(t)-u(0))\right]_{t=t_n} =D_{\tau}^{\alpha,\sigma,\gamma,m,n} u
+O(\tau^{p}).
\end{equation}

\end{theorem}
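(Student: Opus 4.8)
The plan is to reduce the claim to Lubich's classical result in \cite{Lub86} through an exponential conjugation that strips off the tempering factor. Set $\widetilde{w}(t)=e^{\sigma t}\bigl(u(t)-u(0)\bigr)$, and note $\widetilde{w}(0)=0$, so no boundary term is lost. Combining Lemma \ref{lem3-1} with Lemma \ref{lem:1-5} gives the operator identity
\[
\left[D^{\sigma,\alpha}_{0,t}\bigl(u(t)-u(0)\bigr)\right]_{t=t_n}
= e^{-\sigma t_n}\left[{}_{RL}D_{0,t}^{\alpha}\widetilde{w}(t)\right]_{t=t_n}.
\]
On the discrete side, the generating function \eqref{FLMM-S} factors as $\omega^{(\alpha,\sigma)}(z)=(\omega(ze^{-\sigma\tau}))^{\alpha}=\omega^{(\alpha)}(ze^{-\sigma\tau})$, where $\omega^{(\alpha)}(z)=(\omega(z))^{\alpha}=\sum_{k}\omega^{(\alpha)}_{k}z^{k}$ is the untempered ($\sigma=0$) generating function of the same LMM; comparing coefficients yields $\omega^{(\alpha,\sigma)}_{k}=e^{-k\sigma\tau}\omega^{(\alpha)}_{k}$.

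Next I would factor the discrete operator \eqref{FLMM} in the same way. Using $e^{-(n-k)\sigma\tau}=e^{-\sigma t_n}e^{\sigma t_k}$ together with $e^{\sigma t_k}\bigl(u(t_k)-u(0)\bigr)=\widetilde{w}(t_k)$, the principal convolution becomes
\[
\tau^{-\alpha}\sum_{k=0}^{n}\omega^{(\alpha,\sigma)}_{n-k}\bigl(u(t_k)-u(0)\bigr)
= e^{-\sigma t_n}\,\tau^{-\alpha}\sum_{k=0}^{n}\omega^{(\alpha)}_{n-k}\widetilde{w}(t_k),
\]
which is precisely $e^{-\sigma t_n}$ times the classical FLMM convolution of \cite{Lub86} evaluated at $\widetilde{w}$. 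Writing $\widehat{w}_{n,k}=e^{(n-k)\sigma\tau}w^{(\alpha,\sigma)}_{n,k}$ collapses the starting-weight sum identically, so that $D_{\tau}^{\alpha,\sigma,\gamma,m,n}u=e^{-\sigma t_n}\widehat{D}_{\tau}\widetilde{w}$, where $\widehat{D}_{\tau}$ is the classical operator built from $\omega^{(\alpha)}_{k}$ and $\widehat{w}_{n,k}$ acting on $\widetilde{w}$.

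It then remains to invoke the classical theorem of \cite{Lub86} to obtain $\widehat{D}_{\tau}\widetilde{w}=\left[{}_{RL}D_{0,t}^{\alpha}\widetilde{w}\right]_{t=t_n}+O(\tau^{p})$ and to multiply back by $e^{-\sigma t_n}$; since $\sigma\geq0$ and $t_n\geq0$ force $0<e^{-\sigma t_n}\leq1$, the remainder order is preserved and the stated estimate follows.

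I expect the starting-weight bookkeeping to be the main obstacle. The tempered weights $w^{(\alpha,\sigma)}_{n,k}$ are fixed by exactness of $D_{\tau}^{\alpha,\sigma,\gamma,m,n}$ on the monomials $u=t^{\gamma_j}$ (which vanish at $t=0$, so the $u(0)$ subtraction is harmless there), whereas Lubich's hypothesis calibrates the starting weights by exactness on $t^{\gamma_j}$ directly; under the conjugation a monomial $t^{\gamma_j}$ is carried to $e^{\sigma t}t^{\gamma_j}$, and the Taylor expansion of $e^{\sigma t}$ injects the extra endpoint singularities $t^{\gamma_j+i}$, $i\geq1$. The point to check carefully is that imposing exactness on the conjugated functions $e^{\sigma t}t^{\gamma_j}$ forces $\widehat{w}_{n,k}$ to annihilate exactly those singular powers of $\widetilde{w}$ that are not already $O(\tau^{p})$, so that Lubich's endpoint-correction analysis transfers verbatim to $\widetilde{w}$ with these starting weights; granting this correspondence, the remaining bounds are routine.
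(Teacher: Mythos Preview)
Your approach is correct and is precisely the route the paper takes, though the paper is terser: it simply states that Theorems 3.1 and 3.2 ``immediately derive'' from \cite{Lub86} without writing out a proof. The exponential conjugation $\widetilde{w}(t)=e^{\sigma t}(u(t)-u(0))$ you use is exactly the mechanism the paper relies on implicitly; the identity $D^{\sigma,\alpha}_{0,t}v=e^{-\sigma t}\,{}_{RL}D^{\alpha}_{0,t}(e^{\sigma\cdot}v)$ is the content of Lemma~\ref{lem3-1}, and the weight relation $\omega^{(\alpha,\sigma)}_{k}=e^{-k\sigma\tau}\omega^{(\alpha,0)}_{k}$ is stated explicitly at the end of Section~\ref{sec-3}. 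Your identification of the starting-weight bookkeeping as the only nonroutine step is accurate, and the paper does not address it either---it is absorbed into the informal ``immediately.''
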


Next, we   discuss   how to implement the fast computation of the convolution quadrature coefficients
$\omega^{(\alpha,\sigma)}_{k}$ defined by \eqref{FBDF-S}, \eqref{GNGF-S}, and \eqref{FTRAP-S}.
In fact, we need only to consider how to derive $\omega^{(\alpha,\sigma)}_{k}$ defined by
\eqref{FBDF-S}, since $\omega^{(\alpha,\sigma)}_{k}$  given in
\eqref{GNGF-S}  and \eqref{FTRAP-S} can be derived from
the coefficients given in \eqref{FBDF-S} for $p=1$.
For the FBDF-$p$ given in  \eqref{FBDF-S}, the coefficients satisfy  $\omega^{(\alpha,\sigma)}_{k}=e^{-k\tau\sigma}\omega^{(\alpha,0)}_{k}$,
where $\omega^{(\alpha,0)}_{k}$ can be efficiently calculated by the
recurrence formula; see, e.g., \cite{DieFFW06,LiZeng13}.



\section{Fast calculation}\label{sec-4}
In this section, we present fast calculations for the convolution
$\tau^{-\alpha}\sum_{k=0}^n\omega^{(\alpha,\sigma)}_{n-k}u(t_k) $ defined in \eqref{FLMM-2}, where the coefficients $\omega^{(\alpha,\sigma)}_{k}$
can be derived from  \eqref{FBDF-S}, \eqref{GNGF-S},   \eqref{FTRAP-S}, or \eqref{FLMM-S}.
The key idea is to represent the
coefficients $\omega^{(\alpha,\sigma)}_{k}$ using the integral formula and then approximate it using numerical quadrature.
We first extend the fast method in \cite{ZengTBK2018} to calculate
the discrete convolution $\tau^{-\alpha}\sum_{k=0}^n\omega^{(\alpha,\sigma)}_{n-k}u(t_k)$ in \eqref{FLMM},
which is called Fast Method I in the following context.
Then we propose the second fast method based on the approaches in \cite{BanjaiLopez18,LubSch02,ZengTBK2018}, which is called  Fast Method II.

\subsection{Fast Method I}
Following the approach developed in \cite{ZengTBK2018}, the convolution quadrature weights $\omega_n^{(\alpha,\sigma)}$ in \eqref{FLMM-2} can be expressed as
\begin{equation}\label{contour-int}
\omega_n^{(\alpha,\sigma)} =e^{-n\sigma\tau}\omega_n^{(\alpha,0)}
= \frac{\tau^{1+\alpha} e^{-n\sigma\tau}}{2\pi i}\int_{\mathcal{C}_{\ell}}\lambda^{\alpha}(1-\lambda\tau)^{-1-n}
F_{\omega}(\lambda)\dx[\lambda],
\end{equation}
where $\mathcal{C}_{\ell}$ is
a contour that surrounds the poles  of $(1-\lambda\tau)^{-1-n}$
and $F_{\omega}(\lambda)$ (see also   (38) in \cite{ZengTBK2018})
is related to the FLMM   \eqref{FLMM} defined by
the generating functions, which is given by
\begin{equation}\label{contour-int-F}
F_{\omega}(\lambda)=(\tau\lambda)^{-\alpha}\omega^{(\alpha,0)}(1-\tau\lambda).
\end{equation}
where  $\omega^{(\alpha,0)}(z)$ is defined by  \eqref{FBDF-S}, \eqref{GNGF-S},  \eqref{FTRAP-S}, or \eqref{FLMM-S}

To approximate the contour integral \eqref{contour-int} with high accuracy, we apply the
trapezoidal rule based on the   Talbot contour
(see, e.g., \cite{LopLubSch08,ZengTBK2018}) to obtain
\begin{equation}\label{contour-int-2}
\omega_n^{(\alpha,\sigma)}
\approx\widetilde{\omega}_n^{(\alpha,\sigma)}
= 2\tau^{1+\alpha} e^{-n\sigma\tau} \mathbf{Im}
\left(\sum_{j=0}^{N-1}w_j^{(\ell)}(\lambda_j^{(\ell)})^{\alpha}(1-\lambda^{(\ell)}_j\tau)^{-1-n}
F_{\omega}(\lambda^{(\ell)}_j) \right),
\end{equation}
where the  quadrature points $\lambda^{(\ell)}_j$ and weights $w_j^{(\ell)}$ are given by
(see, e.g., \cite{Weideman06,ZengTBK2018})
\begin{equation}\label{Talbot}
{\lambda}_j^{(\ell)} = z(\theta_j,N/T_{\ell}),{\quad}
{w}_j^{(\ell)}=\px[\theta]z(\theta_j,N/T_{\ell}), {\quad}\theta_j={(2j+1)\pi}/({2N}),
\end{equation}
with $z(\theta,N)=N\left(-0.4814 +0.6443(\theta\cot(\theta)+i0.5653\theta)\right)$,
$T_{\ell}=(2B^{\ell}-2+n_0)\tau$,  $B>1$ is a positive integer.

According to the procedure in \cite{LubSch02,SchLopLub06,ZengTBK2018}, we need to first find the smallest integer $L$ satisfying $n-n_0+1\le 2B^L$ for each $n\ge n_0$. Then for $\ell=1,2,...L$,
we obtain a unique integer $q_{\ell}$ satisfying
\begin{equation}\label{fast1-b}
b_\ell^{(n)}=q_{\ell}B^l \quad \text{with} \quad n-n_0+1-b_\ell^{(n)}\in [B^{\ell-1},2B^\ell-1].
\end{equation}
Set $b_0^{(n)}=n-n_0$ and $b_L^{n}=0$. Readers can refer to \cite{SchLopLub06}
for the  pseudocode for determining $q_{\ell}$ and  $b_\ell^{(n)}$.

To develop the fast method, the convolution $u^{(\alpha,\sigma)}_n=\tau^{-\alpha}\sum_{k=0}^{n}\omega^{(\alpha,\sigma)}_{n-k}u_{k}$
is decomposed as
\begin{equation}\label{fast1-decomposition}
u^{(\alpha,\sigma)}_n=\tau^{-\alpha}\sum_{k=0}^{n}\omega^{(\alpha,\sigma)}_{n-k}u_{k}
=\sum_{\ell=0}^{n}u_n^{(\ell)},
\end{equation}
with $u_n^{(0)}=\sum_{k=n-n_0}^{n}\omega^{(\alpha,\sigma)}_{n-k}u_{k}$ and $u_n^{(\ell)}=\sum_{k=b_{\ell}^{(n)}}^{b_{\ell-1}^{(n)}-1}\omega^{(\alpha,\sigma)}_{n-k}u_{k}$. For each part $u_n^{(\ell)}$, we can use \eqref{contour-int-2} to approximate the corresponding quadrature weights. The summary of Fast Method I is given in Algorithm \ref{alg:fc1}.

\begin{algorithm}
\caption{Fast Method I for approximating $u^{(\alpha,\sigma)}_n=\tau^{-\alpha}\sum_{k=0}^{n}\omega^{(\alpha,\sigma)}_{n-k}u_{k}$, where $\omega^{(\alpha,\sigma)}_{k}$ satisfies \eqref{contour-int}.}\label{alg:fc1}
\begin{algorithmic}[1]
\State Input: the fractional order $\alpha$ and $\sigma\geq 0$, a time stepsize $\tau>0$,
  the suitable positive integers $n_0$, $N$, and $B\geq 2$,
  the quadrature points $\lambda_j^{(\ell)}$ and weights $w_j^{(\ell)}$  (see \eqref{real-int-3}), the coefficients $\omega^{(\alpha,\sigma)}_{n}(0\leq n \leq n_0)$ defined by \eqref{FBDF-S}, \eqref{GNGF-S},   \eqref{FTRAP-S}, or \eqref{FLMM-S},
  and the function $F_{\omega}(\lambda)$ defined by \eqref{contour-int-F}.
\State Output: the fast approximation ${}_Fu^{(\alpha,\sigma)}_n$ of $u^{(\alpha,\sigma)}_n$
(see \eqref{fast1-decomposition}).

\begin{itemize}
  \item Step 1. Find the smallest integer $L$ satisfying $n-n_0+1\le 2B^L$ for each $n\ge n_0$.
  \item Step 2. Determine $q_{\ell}$ according to \eqref{fast1-b} for $\ell=1,2,...,L-1$.
  \item Step 3. For every $1\leq \ell\leq L$, approximate $u_n^{(\ell)}$ by
  \begin{equation}\label{fast1-app}
 \widehat{u}_n^{(\ell)}= 2\mathbf{Im}\bigg\{\sum_{j=0}^{N-1}w_j^{(\ell)}F_\omega(\lambda_j^{(\ell)})
 (1-\tau\lambda_j^{(\ell)})^{-[n-b_{\ell-1}^{(n)}-1]}y^{(\ell)}(\tau\lambda_j^{(\ell)})\bigg\},
\end{equation}
where
$y^{(\ell)}(\tau\lambda)=y^{(\ell)}(b^{(n)}_{\ell-1}\tau,b^{(n)}_{\ell}\tau,\tau\lambda)$ is
the backward Euler approximation to the solution at $t=b^{(n)}_{\ell-1}\tau$
of the linear initial-value problem
\begin{equation}\label{ode2}
(y^{(\ell)})'(t)=\lambda y^{(\ell)}(t) + u(t),{\quad}y^{(\ell)}(b^{(n)}_{\ell}\tau)=0.
\end{equation}

  \item Step 4. Calculate
  \begin{equation}\label{fast1}
{}_Fu^{(\alpha,\sigma)}_n=u_n^{(0)}+\widehat{u}_n^{(1)}+\cdot\cdot\cdot+\widehat{u}_n^{(L)}.
\end{equation}

\end{itemize}
\end{algorithmic}
\end{algorithm}

\begin{remark}
Here we use the Talbot contour quadrature to approximate the contour integral
\eqref{contour-int}. However, other contour quadratures can be used to discretize \eqref{contour-int},
such as  the hyperbolic and parabolic contour quadratures.
For more details,   see \cite{BanLopSch17,LopLubPS2005,SchLopLub06,ZengTBK2018} and references therein.
\end{remark}

\begin{remark}
It is shown in \cite{ZengTBK2018} that the memory requirement and computational cost of Fast Method I are about $O(N\log n_T)$ and $O(Nn_T\log n_T)$, respectively,
when $n_T$ is sufficiently large.
\end{remark}

\subsection{Fast Method II}
Instead of using   \eqref{contour-int} for expressing
the convolution weights $\omega_n^{(\alpha,\sigma)}$, we extend Lemma 9 in \cite{BanjaiLopez18} to
re-express the contour integral \eqref{contour-int} into the following form
\begin{equation}\label{real-int}
 \omega_n^{(\alpha,\sigma)}
=  {\tau^{1+\alpha} e^{-n\sigma\tau}} \frac{\sin(\alpha \pi)}{\pi}
\int_0^{\infty}\lambda^{\alpha}(1+\lambda\tau)^{-1-n}
F_{\omega}(-\lambda)\dx[\lambda],
\end{equation}
where $F_{\omega}$ is given by \eqref{contour-int-F}.
The key point is how to approximate \eqref{real-int} efficiently and accurately.
Here we follow the idea in \cite{McLean2016} and let $\lambda= \exp(x)$.
Then the integral \eqref{real-int}  becomes
\begin{equation}\label{real-int-2}
 \omega_n^{(\alpha,\sigma)}
=  \tau^{1+\alpha} e^{-n\sigma\tau}
\int_{-\infty}^{\infty} \phi_n(x)    \dx[x],
\end{equation}
where
\begin{equation}\label{phix}
\phi_n(x)= (1+e^x\tau)^{-1-n}\phi(x),{\qquad}
\phi(x)= -\frac{\sin(\alpha \pi)}{\pi}e^{(1+\alpha)x}F_{\omega}(-e^x).
\end{equation}
We find that $\phi_n(x)$ decays exponentially as  {$|x|\to \infty$} for any $n>n_0$,
where $n_0$ is a suitable positive integer.
Figure \ref{eg51fig0} shows the exponential decay of $\phi_n(x)$ for $\alpha=0.2$ and $0.8$
when the second-order GNGF \eqref{GNGF-S} is applied, $n_0=50$.
For the GNGF-$p$ and FBDF-$p$, and
any fractional order $\alpha>0$, the corresponding $\phi_n(x)$ decays exponentially
for $n>n_0$ as {$|x|\to \infty$}
but these results are not shown here.

\begin{figure}[!h]
\begin{center}
\begin{minipage}{0.47\textwidth}\centering
\epsfig{figure=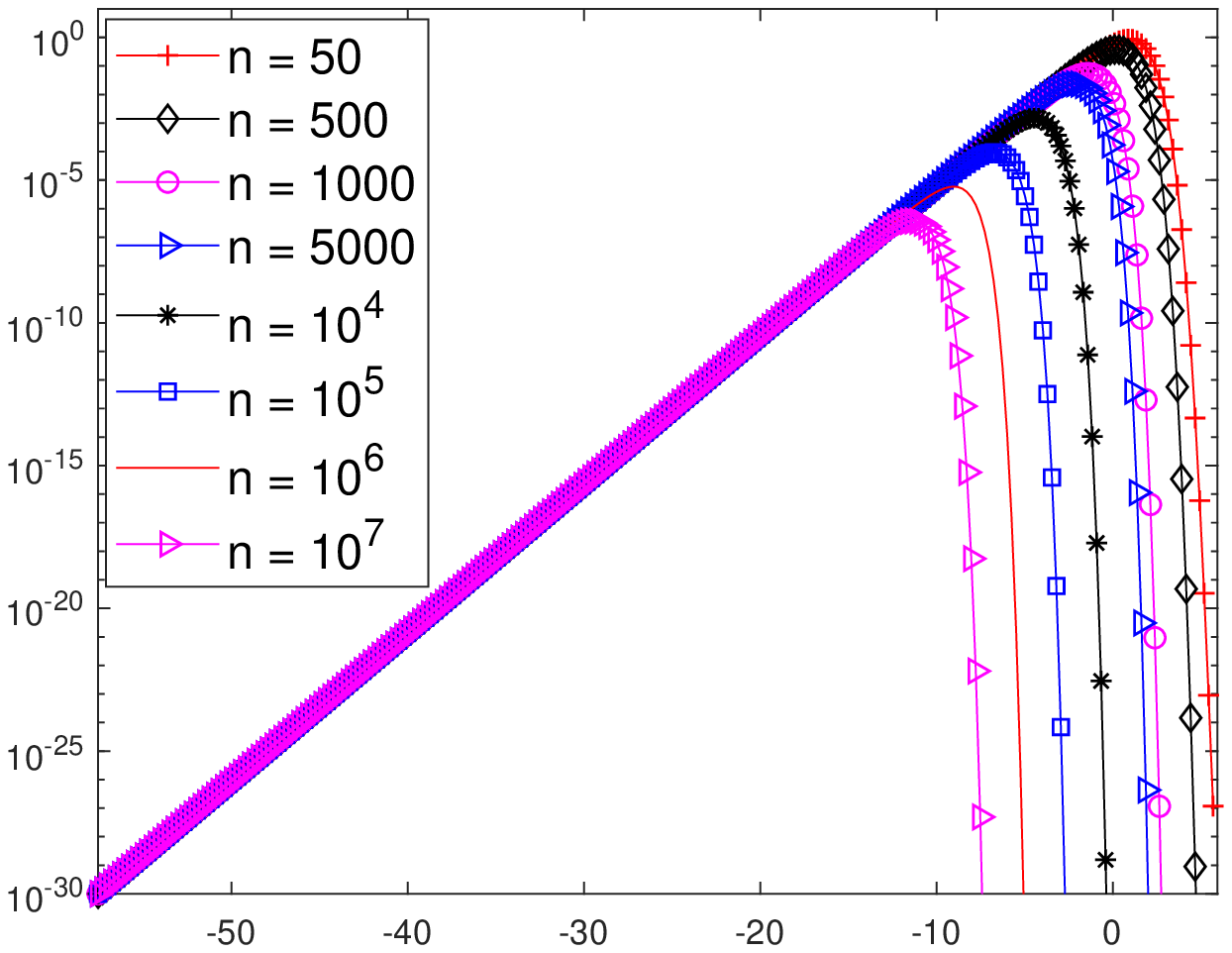,width=5.3cm} \par {(a) $\alpha=0.2$.}
\end{minipage}
\begin{minipage}{0.47\textwidth}\centering
\epsfig{figure=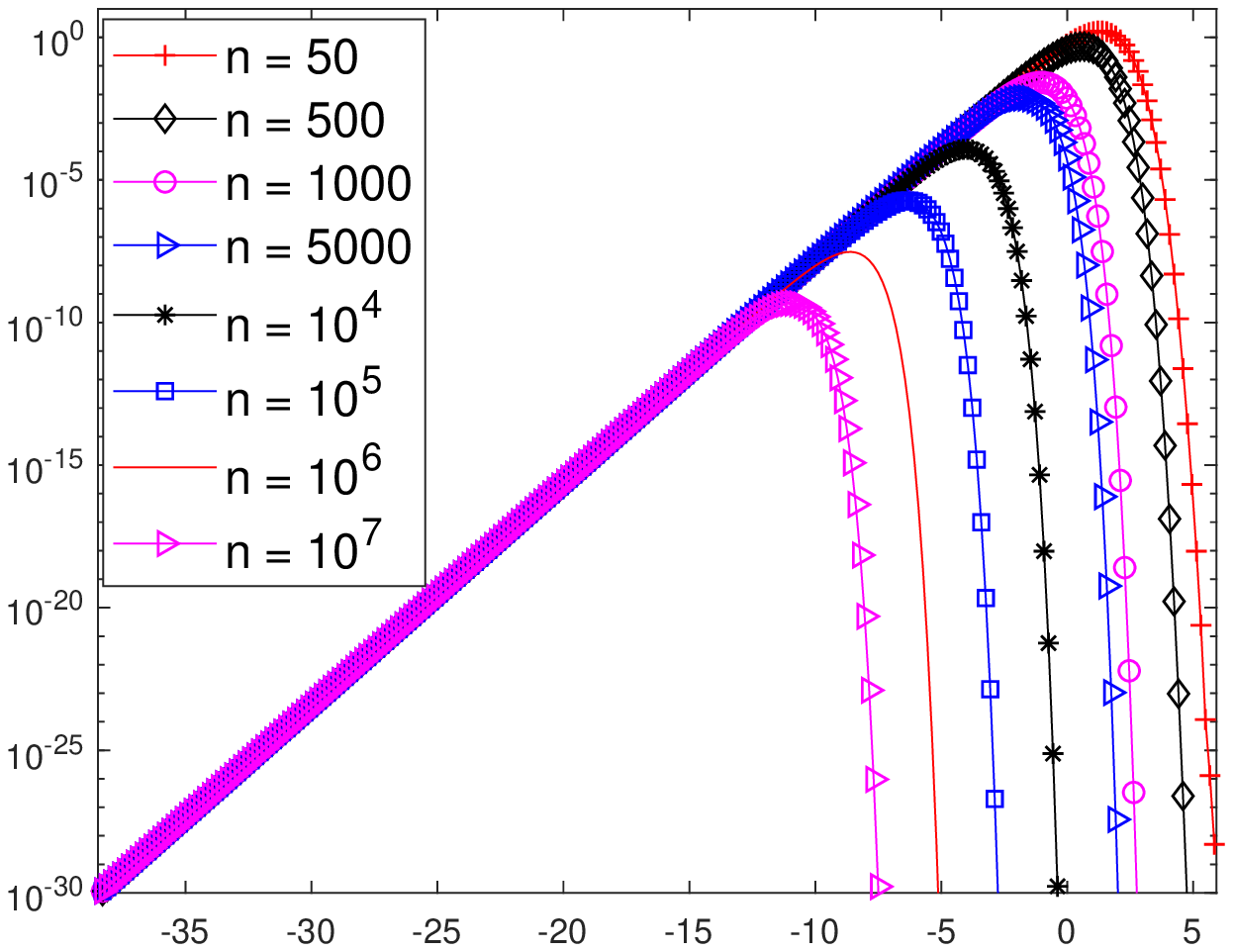,width=5.3cm} \par {(b) $\alpha=0.8$.}
\end{minipage}
\end{center}
\caption{The exponential decay of $\phi_n(x)$ for second-order GNGF, $\tau=0.01$.\label{eg51fig0}}
\end{figure}

The exponential decay of $\phi_n(x)$ inspires us to use the exponentially convergent trapezoidal rule
(see \cite{TrefethenWeidman14}) to approximate the integral $\int_{-\infty}^{\infty}\phi_n(x)\dx$.
Thus, we have
\begin{equation}\label{real-int-3-2}\begin{aligned}
\omega_n^{(\alpha,\sigma)}\approx  \widehat{\omega}_n^{(\alpha,\sigma)}
=&{\tau^{1+\alpha} e^{-n\sigma\tau}}
\Delta{x}\sum_{j=-\infty}^{\infty} (1+e^{x_j}\tau)^{-1-n} \phi(x_j)\\
=&{\tau^{1+\alpha} e^{-n\sigma\tau}} \sum_{j=-\infty}^{\infty}w_j(1+\lambda_j\tau)^{-1-n},
\end{aligned}\end{equation}
where $\lambda_j=e^{x_j}$, $x_j=j\Delta{x}$, $\Delta{x}>0$ is a positive number,
$w_j=\Delta{x}\phi(x_j)$.

We have the following theorem for the error of   \eqref{real-int-3-2}, see \cite{TrefethenWeidman14}.
\begin{theorem}[\cite{TrefethenWeidman14}]
Suppose $\phi_n(x)$ is analytic in the strip $\big |Im(x)\big | < a$ for some $a > 0$.
Suppose further that $\phi_n(x)\to0$ uniformly as $|x|\to\infty$ in the strip, and for some $M$,
$\int_{-\infty}^{\infty}|\phi_n(x+ib)| \dx[x]\leq M$
for all $b\in(-a,a)$. Then, for any $\Delta{x}>0$,
$\widehat{\omega}_n^{(\alpha,\sigma)}$ as defined by
\eqref{real-int-3-2} exists and satisfies
\begin{equation}\label{real-int-4}
|\omega_n^{(\alpha,\sigma)}-\widehat{\omega}_n^{(\alpha,\sigma)}|
\leq \tau^{1+\alpha} \pi^{-1}|\sin(\alpha\pi)| \frac{2M}{e^{{2\pi a}/{\Delta{x}}}-1}.
\end{equation}
\end{theorem}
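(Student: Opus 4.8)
The plan is to recognize this statement as the exponentially convergent trapezoidal rule of Trefethen and Weideman applied to the integrand of \eqref{real-int-2}, followed by a routine bookkeeping of the prefactors appearing in \eqref{real-int-3-2}. The heart of the matter is the clean estimate that, for a function $f$ analytic in the strip $|\mathrm{Im}(x)|<a$, tending to zero uniformly as $|x|\to\infty$ within the strip, and satisfying $\int_{-\infty}^\infty|f(x+ib)|\dx[x]\le M$ for all $b\in(-a,a)$, the trapezoidal error obeys
\[
\Big|\Delta x\sum_{j=-\infty}^\infty f(j\Delta x)-\int_{-\infty}^\infty f(x)\dx[x]\Big|\le\frac{2M}{e^{2\pi a/\Delta x}-1}.
\]
I would first establish this core estimate and then transfer it to $\omega_n^{(\alpha,\sigma)}$.

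For the core estimate, the first step is to invoke the Poisson summation formula, which rewrites the trapezoidal sum as
\[
\Delta x\sum_{j=-\infty}^\infty f(j\Delta x)=\sum_{k=-\infty}^\infty \widehat f\!\Big(\frac{2\pi k}{\Delta x}\Big),\qquad \widehat f(\xi)=\int_{-\infty}^\infty f(x)e^{-i\xi x}\dx[x],
\]
so that, since $\int_{-\infty}^\infty f\dx[x]=\widehat f(0)$, the quadrature error equals $\sum_{k\ne 0}\widehat f(2\pi k/\Delta x)$. The second step is to bound each nonzero Fourier coefficient by shifting the contour of integration to the horizontal line $\mathrm{Im}(x)=-b$ (for $\xi>0$) or $\mathrm{Im}(x)=b$ (for $\xi<0$); analyticity in the strip together with the uniform decay hypothesis justifies discarding the vertical connecting segments, and Cauchy's theorem then yields $|\widehat f(\xi)|\le Me^{-|\xi|b}$. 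Letting $b\uparrow a$ and summing the geometric series $2M\sum_{k\ge1}e^{-2\pi k a/\Delta x}$ produces the stated bound $\frac{2M}{e^{2\pi a/\Delta x}-1}$.

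The final step is to apply the core estimate to the integrand of \eqref{real-int-2}. I would factor out the constant $-\sin(\alpha\pi)/\pi$ from $\phi_n$ in \eqref{phix} and apply the estimate to the remaining analytic factor $(1+e^x\tau)^{-1-n}e^{(1+\alpha)x}F_{\omega}(-e^x)$, so that the hypothesized constant $M$ controls its strip integral; the factor $\pi^{-1}|\sin(\alpha\pi)|$ then reappears multiplicatively in the bound. Recalling from \eqref{real-int-3-2} that $\omega_n^{(\alpha,\sigma)}-\widehat{\omega}_n^{(\alpha,\sigma)}=\tau^{1+\alpha}e^{-n\sigma\tau}\big(\int_{-\infty}^\infty\phi_n\dx[x]-\Delta x\sum_j\phi_n(x_j)\big)$, I would pull the prefactor through the estimate and use $e^{-n\sigma\tau}\le1$ (valid since $\sigma\ge0$, $n\ge0$, $\tau>0$) to arrive at \eqref{real-int-4}.

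The step I expect to be the genuine obstacle is not any single inequality but the verification that the hypotheses actually hold for the integrand $\phi_n$: exhibiting a strip of analyticity of definite half-width $a>0$, which requires controlling the poles of $(1+e^x\tau)^{-1-n}$ and the analytic structure of $F_{\omega}(-e^x)$ near the imaginary axis, and securing a uniform integrability bound $M$. As \eqref{real-int-4} is phrased conditionally on these facts, within the theorem itself they are assumed; the real analytical content—and the reason the exponential decay illustrated in Figure \ref{eg51fig0} matters—lies in confirming them for the specific generating functions \eqref{FBDF-S}, \eqref{GNGF-S}, and \eqref{FTRAP-S}.
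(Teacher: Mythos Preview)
The paper does not provide its own proof of this theorem; it is simply cited from Trefethen and Weideman \cite{TrefethenWeidman14}, and your outline via Poisson summation followed by contour shifting to bound the aliasing terms is exactly the standard argument given there. One bookkeeping remark: as literally stated, the hypothesis places the bound $M$ on $\int_{-\infty}^\infty|\phi_n(x+ib)|\dx[x]$, and since $\phi_n$ already contains the factor $\sin(\alpha\pi)/\pi$, the core trapezoidal estimate applied directly to $\phi_n$ would give $\tau^{1+\alpha}e^{-n\sigma\tau}\cdot\frac{2M}{e^{2\pi a/\Delta x}-1}$ without the extra $\pi^{-1}|\sin(\alpha\pi)|$; your reading, in which $M$ bounds the strip integral of the factor $(1+e^x\tau)^{-1-n}e^{(1+\alpha)x}F_\omega(-e^x)$ with the sine constant pulled out, is what makes \eqref{real-int-4} come out as written, and is the natural interpretation.
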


In real applications, we do not use \eqref{real-int-3-2}. Instead, we truncate  \eqref{real-int-3-2}
and derive the following modified version, which is used in this paper
\begin{equation}\label{real-int-3}\begin{aligned}
\widehat{\omega}_n^{(\alpha,\sigma)}
=&{\tau^{1+\alpha} e^{-n\sigma\tau}} \sum_{j=0}^{Q-1}w_j(1+\lambda_j\tau)^{-1-n},
\end{aligned}\end{equation}
where $w_j$ and $\lambda_j$ are determined according  to the property of $\phi_n(x)$
(see Figure \ref{eg51fig0}), which may be different from  those used in \eqref{real-int-3-2}.

We illustrate how to derive $w_j$ and $\lambda_j$ in \eqref{real-int-3}.
Denote
$\phi_{n,\max}=\max\limits_{|x|<\infty}\phi_n(x)$ and
$S_n(x) =\{x|{\phi_{n}(x)}/{\phi_{n,\max}}\geq\epsilon,x\in\mathbb{R}\}$, where $\epsilon>0$ is given (we set
$\epsilon=10^{-20}$ in this paper).
Find $x_{\min}^n=\min\limits_{x} \{S_n(x)\}$ and $x_{\max}^n=\max\limits_{x} \{S_n(x)\}$.
Denote $x_{\min}=\min\{x_{\min}^{n_0},x_{\min}^{n_T}\}$ and $x_{\max}=\max\{x_{\max}^{n_0},x_{\max}^{n_T}\}$.
Given a positive integer $Q$, let $\Delta{x}=(x_{\max}-x_{\min})/(Q-1)$ and $x_j=x_{\min}+j\Delta{x}$.
Then, $w_j$ and $\lambda_j$ are determined by $w_j=\Delta{x}\phi(x_j)$ and $\lambda_j=\exp(x_j)$.

Based on \eqref{real-int-3},  we   give a detailed implementation of Fast Method II. We first decompose the discrete convolution $u^{(\alpha,\sigma)}_n=\tau^{-\alpha}\sum_{j=0}^{n}\omega^{(\alpha,\sigma)}_{n-j}u_{j}$ into
      \begin{equation}\label{real-int-5}
u^{(\alpha,\sigma)}_n={}_Lu_{n,n_0}^{(\alpha,\sigma)}+{}_Hu_{n,n_0}^{(\alpha,\sigma)}
 \equiv\tau^{-\alpha}\sum_{k=n-n_0}^{n}\omega^{(\alpha,\sigma)}_{n-k}u_{k}
 +\tau^{-\alpha}\sum_{k=0}^{n-n_0-1}\omega^{(\alpha,\sigma)}_{n-k}u_{k}.
\end{equation}
Then, the local part ${}_Lu_{n,n_0}^{(\alpha,\sigma)}$ is calculated directly. In the following, we give a simple illustration on how to obtain ${}_Hu_{n,n_0}^{(\alpha,\sigma)}$.
Inserting $\omega_n^{(\alpha,\sigma)}$ defined by \eqref{real-int-2} into
${}_Hu_{n,n_0}^{(\alpha,\sigma)}$, we obtain
\begin{equation}\label{real-int-8}
\begin{aligned}
{}_Hu_{n,n_0}^{(\alpha,\sigma)}
      =&\tau\sum_{k=0}^{n-n_0-1}{e^{-(n-k)\sigma\tau}}u_{k}
\int_{-\infty}^{\infty} (1+e^x\tau)^{-1-(n-k)} \phi(x)dx.
\end{aligned}\end{equation}
Applying \eqref{real-int-3} to the above integral yields
\begin{equation}\label{real-int-9}
\begin{aligned}
 {}_Hu_{n,n_0}^{(\alpha,\sigma)}\approx
 &{}_H^Fu_{n,n_0}^{(\alpha,\sigma)}
 = \tau\sum_{k=0}^{n-n_0-1}u_{k}{ e^{-(n-k)\sigma}} \sum_{j=0}^{Q-1}w_j(1+\lambda_j\tau)^{-1-(n-k)}\\
=&\sum_{j=0}^{Q-1}w_j\tau\sum_{k=0}^{n-n_0-1}{e^{-(n-k)\sigma\tau}} (1+\lambda_j\tau)^{-1-(n-k)}u_{k}\\
=&e^{-n_0\tau\sigma}(1+\lambda_j\tau)^{-(n_0+1)}\sum_{j=0}^{Q-1}w_jy^{(j)}_{n-n_0},
\end{aligned}\end{equation}
where
$y^{(j)}_{n-n_0}=\tau\sum_{k=0}^{n-n_0-1} \left(e^{\sigma\tau}(1+\lambda_j\tau)\right)^{-(n-n_0-k)}u_{k}$,
which satisfies \eqref{real-int-7}.

A summary of the entire procedure of Fast Method II is given in Algorithm \ref{alg:fc2}.

We now compare the computational performance and accuracy of the proposed
fast methods against the direct method.
\begin{algorithm}
\caption{Fast calculation   of $u^{(\alpha,\sigma)}_{n}=\tau^{-\alpha}\sum_{k=0}^{n}\omega^{(\alpha,\sigma)}_{n-k}u_{k}$, where $\omega^{(\alpha,\sigma)}_{k}$ satisfies   \eqref{FBDF-S}, \eqref{GNGF-S},  \eqref{FTRAP-S}, or \eqref{FLMM-S} (see also \eqref{real-int}).}\label{alg:fc2}
\begin{algorithmic}[1]
\State Input: the fractional order $\alpha$ and $\sigma\geq 0$, a time stepsize $\tau>0$,
  a suitable positive integer $n_0$, the convolution weights
  $\omega^{(\alpha,\sigma)}_{n}(0\leq n \leq n_0)$ defined by \eqref{FBDF-S}, \eqref{GNGF-S},   \eqref{FTRAP-S}, or \eqref{FLMM-S},
  the quadrature points $\lambda_j$ and weights $w_j$  (see \eqref{real-int-3} and its following paragraph).
\State Output: ${}_Fu^{(\alpha,\sigma)}_n$.

\begin{itemize}
  \item Step 1. Approximate  the history part
   ${}_Hu_{n,n_0}^{(\alpha,\sigma)}=\tau^{-\alpha}\sum_{k=0}^{n-n_0-1}\omega^{(\alpha,\sigma)}_{n-k}u_{k}$  by
      \begin{equation}\label{real-int-6}
\begin{aligned}
{}_H^F{u}_{n,n_0}^{(\alpha,\sigma)}
      =e^{-n_0\tau\sigma}(1+\lambda_j\tau)^{-(n_0+1)\tau}\sum_{j=0}^{Q-1}w_jy^{(j)}_{n-n_0},
\end{aligned}\end{equation}
where $y^{(j)}_{n}$ is calculated by the following recurrence formula
\begin{equation}\label{real-int-7}
y^{(j)}_{n}=\frac{e^{-\tau\sigma}}{1+\lambda_j\tau}
\left(y^{(j)}_{n-1}+\tau u_{n-1}\right),
{\quad}y^{(j)}_0=0.
\end{equation}

  \item Step 2. Calculate the local part ${}_Lu_{n,n_0}^{(\alpha,\sigma)}$ directly and let
  \begin{equation}\label{real-int-7-2}
  {}_Fu^{(\alpha,\sigma)}_n={}_Lu_{n,n_0}^{(\alpha,\sigma)}+{}_H^F{u}_{n,n_0}^{(\alpha,\sigma)}.
  \end{equation}

\end{itemize}
\end{algorithmic}
\end{algorithm}

\begin{example}\label{eg21}
Let $u^{(\alpha,\sigma)}_{n}=\tau^{-\alpha}\sum_{k=0}^{n}\omega^{(\alpha,\sigma)}_{n-k}u_{k}$,
where   $\omega^{(\alpha,\sigma)}_{n}$ satisfies \eqref{GNGF-S}. Compute $u^{(\alpha,\sigma)}_n$
by the direct convolution method,  Fast Method I, and Fast Method II.
\end{example}

Define the pointwise error $e^{(r)}_n$ and the maximum pointwise error $\|e^{(r)}\|_{\infty}$ by
$$e_n^{(r)}={\big|u^{(\alpha,\sigma)}_{n}-{}_F^{(r)}u^{(\alpha,\sigma)}_{n}\big|}/
{\big|u^{(\alpha,\sigma)}_{n}\big|},{\qquad}
\|e^{(r)}\|_{\infty}=\max_{0\leq n\leq{T/\tau}} e_n^{(r)},{\quad}r=1,2,$$
where ${}_F^{(1)}u^{(\alpha,\sigma)}_{n}={}_Fu^{(\alpha,\sigma)}_{n}$ is the fast solution from Fast Method I and ${}_F^{(2)}u^{(\alpha,\sigma)}_{n}$ is the fast solution from  Fast Method II.

Figure \ref{eg51-relativeerror} shows the relative errors of
Fast Method I and Fast Method II for Example \ref{eg21}.
We can see that Fast Method II shows better accuracy
than Fast Method I when the same number of the quadrature points is used,
which means Fast Method II saves memory and computational cost
to achieve the same level of accuracy. Furthermore, Fast Method II requires only
real arithmetic operations rather than
complex arithmetic  operations as in Fast Method I, which further reduces the computational cost.

Figure \ref{eg51fig2} depicts a comparison of the computational time of the direct method and
the fast methods.
We can see that both fast methods are more efficient than the direct method for long time computation,
while Fast Method II is much faster than Fast Method I, since Fast Method II uses
real arithmetic operations instead of complex arithmetic operations in Fast Method I.

Figure \ref{eg51fig3} further illustrates why Fast Method II is more accurate than Fast Method I,
since the trapezoidal rule \eqref{real-int-3} used in Fast Method II for approximating the quadrature
weights is more accurate than the Talbot contour quadrature \eqref{contour-int-2}.
Moreover, Figure \ref{eg51fig3}
shows that both the trapezoidal rule and Talbot contour quadrature are also effective for the fractional orders greater than one and the trapezoidal rule shows more accurate approximations.

In summary, Fast Method II is more efficient than Fast Method I for any fractional orders $\alpha\in (0,2)$
and $\sigma\geq0$. For the fractional order $\alpha\in(-1,0)$ and $\alpha>2$, the same effect is still observed, but the results are not displayed here.

\begin{figure}[!h]
\begin{center}
\begin{minipage}{0.47\textwidth}\centering
\epsfig{figure=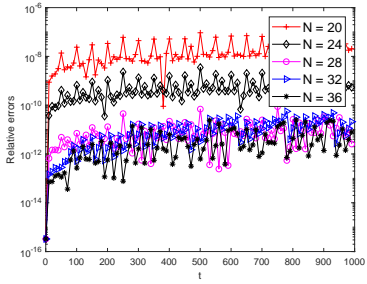,width=5.3cm} \par {(a) Fast Method I, $B=5$.}
\end{minipage}
\begin{minipage}{0.47\textwidth}\centering
\epsfig{figure=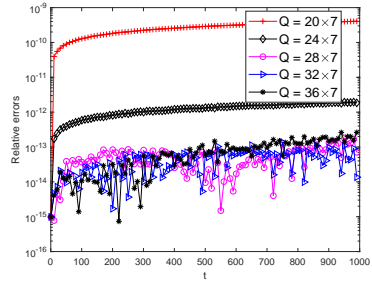,width=5.3cm} \par {(b) Fast Method II.}
\end{minipage}
\end{center}
\caption{The relative errors of Fast Method I and Fast Method II, Example \ref{eg21}:  $u(t)=t+t^2$, $\tau=0.01,\sigma=0$.
The total number of quadrature points used for Fast Method I is $N\times 7$
for $n_0=50,\tau=0.01$ and $T=1000$. The same number $Q=N\times 7$ of quadrature points
are used in Fast Method II for a fair comparison.
\label{eg51-relativeerror}}
\end{figure}

\begin{figure}[!h]
\begin{center}
\begin{minipage}{0.47\textwidth}\centering
\epsfig{figure=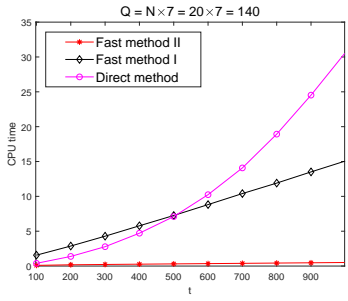,width=5.3cm} \par {(a) Computational time.}
\end{minipage}
\begin{minipage}{0.47\textwidth}\centering
\epsfig{figure=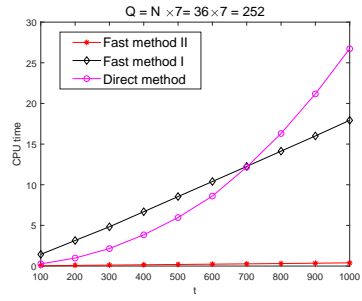,width=5.3cm} \par {(b) Computational time.}
\end{minipage}
\end{center}
\caption{The computational time of direct method, Fast Method I and Fast Method II, Example \ref{eg21}: $u(t)=t+t^2$,  $\tau=0.01,\sigma=0$. The total number of quadrature points:
(a) $Q=N\times 7 = 20\times7=140$; (b) $Q=N\times 7 = 36\times7=252$.
\label{eg51fig2}}
\end{figure}


\begin{figure}[!h]
\begin{center}
\begin{minipage}{0.47\textwidth}\centering
\epsfig{figure=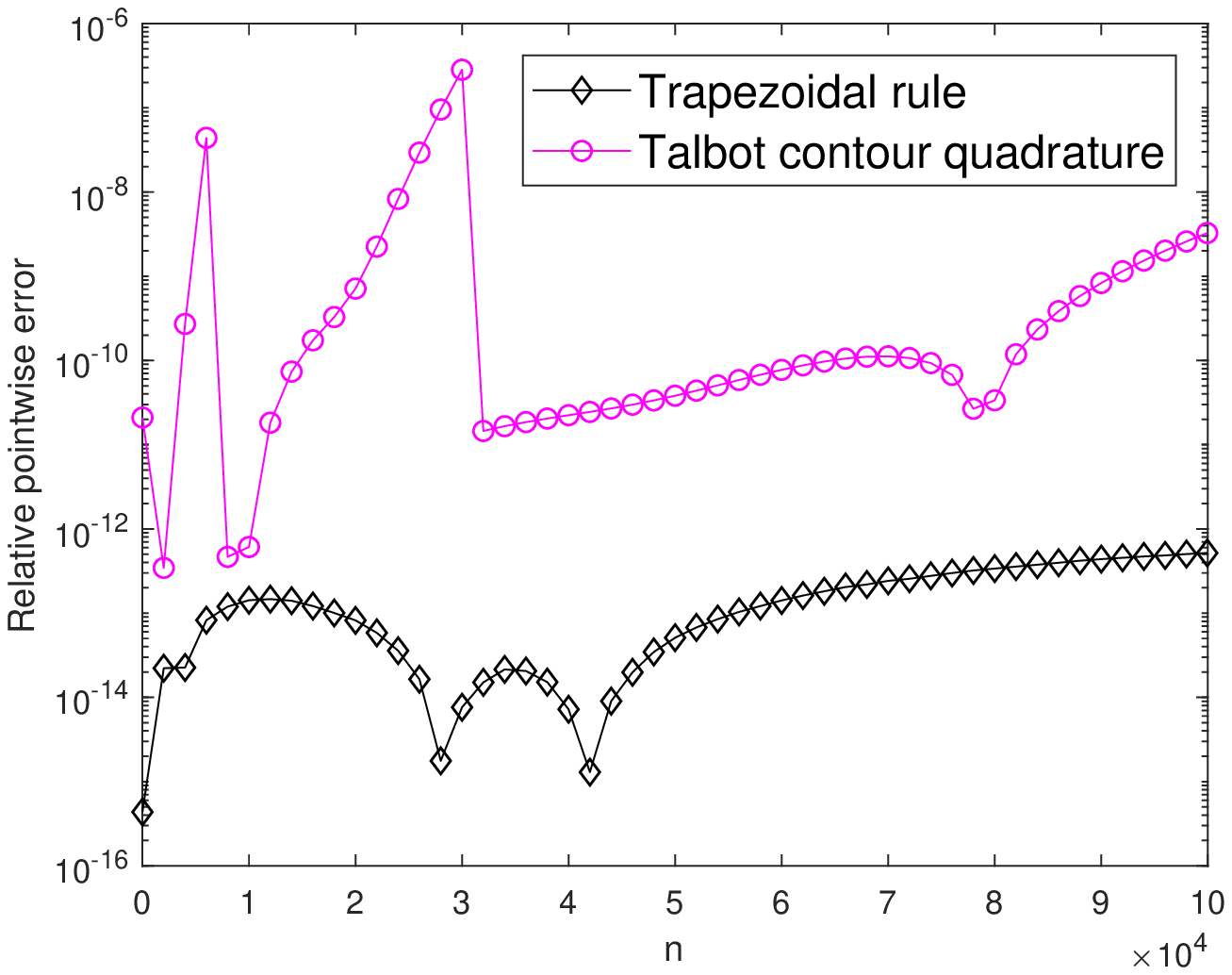,width=5.3cm} \par {(a) $\alpha=0.5,\sigma=0$. }
\end{minipage}
\begin{minipage}{0.47\textwidth}\centering
\epsfig{figure=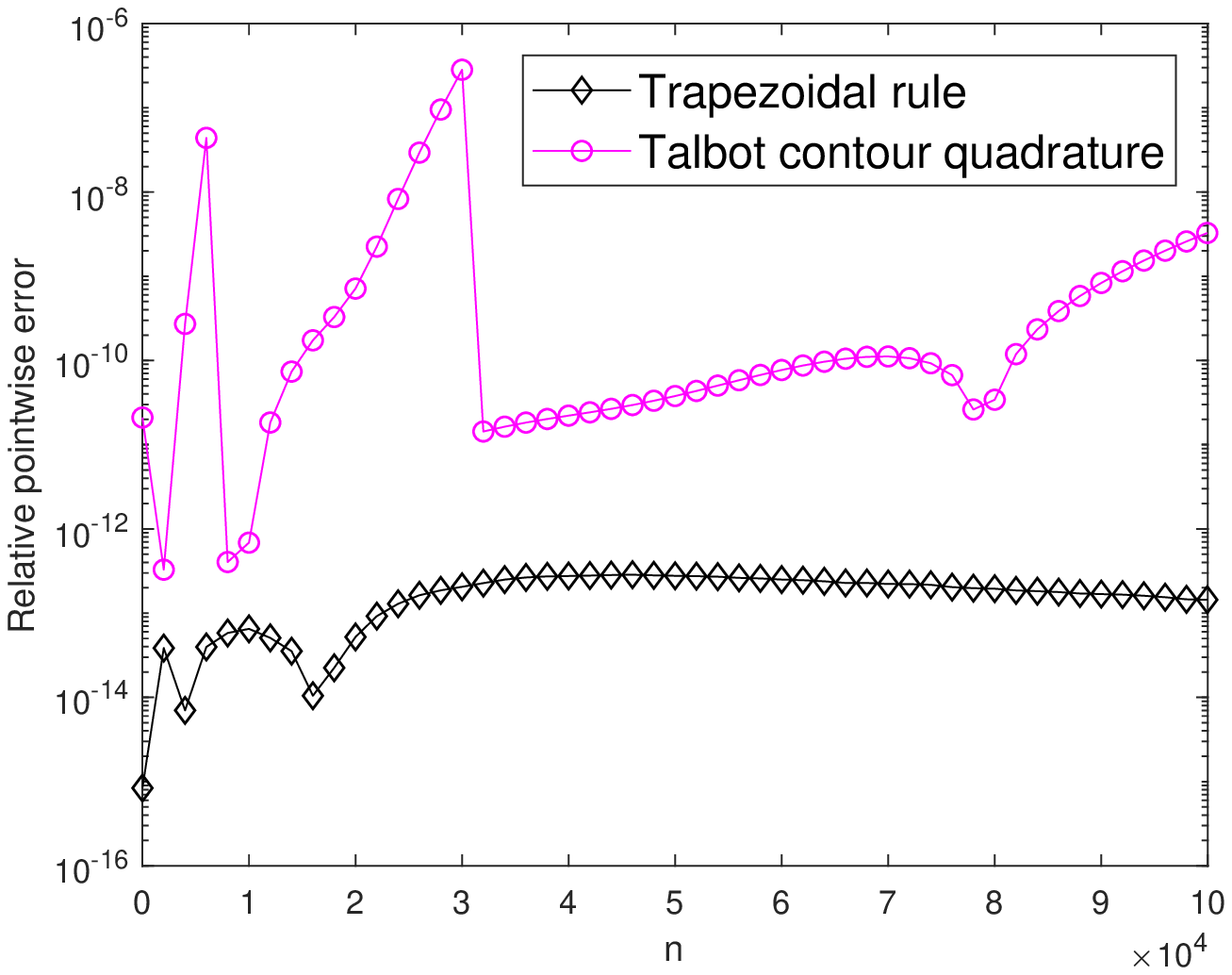,width=5.3cm}
\par {(b) $\alpha=0.5,\sigma=0.5$. }
\end{minipage}
\begin{minipage}{0.47\textwidth}\centering
\epsfig{figure=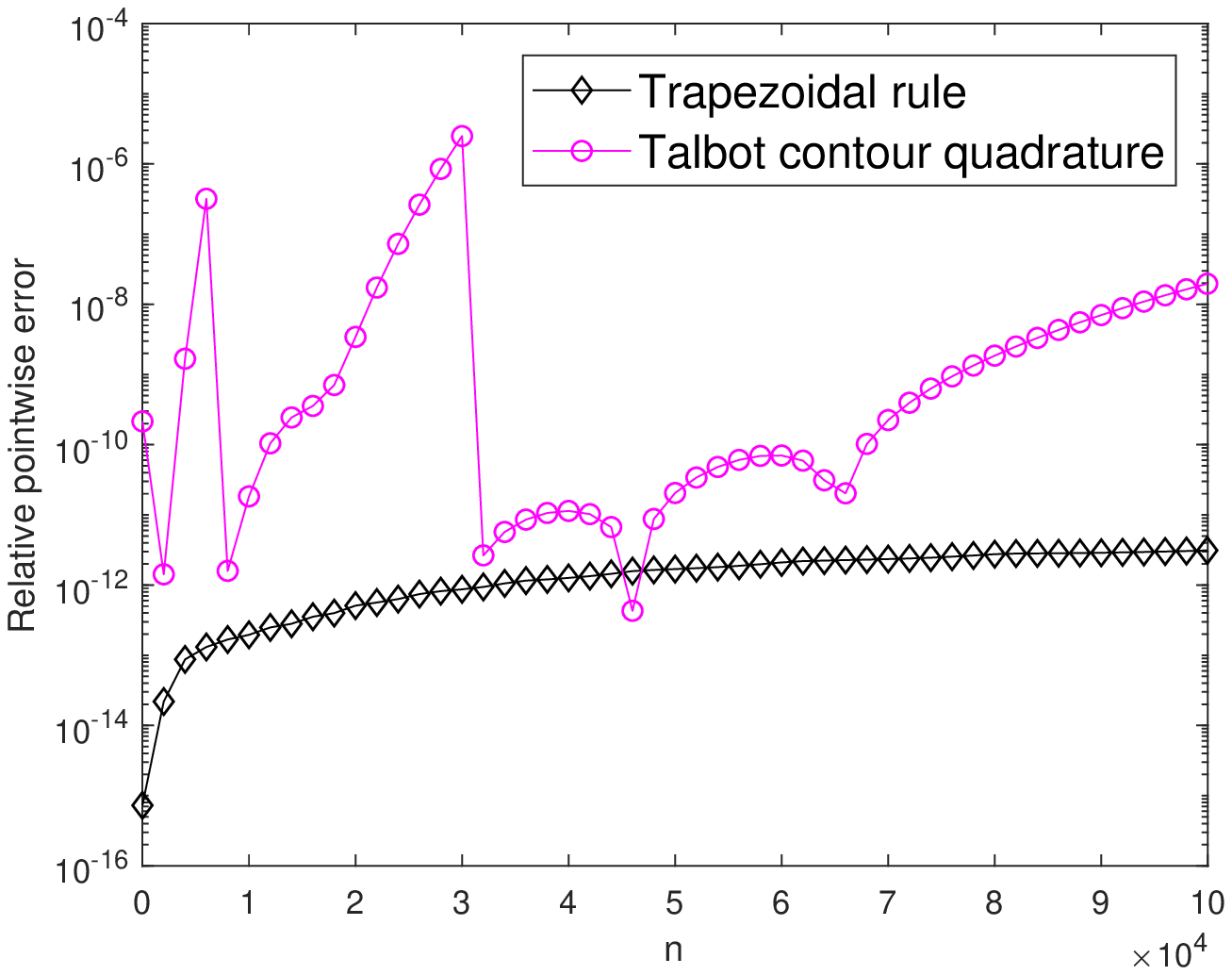,width=5.3cm}
\par {(c) $\alpha=1.5,\sigma=0$. }
\end{minipage}
\begin{minipage}{0.47\textwidth}\centering
\epsfig{figure=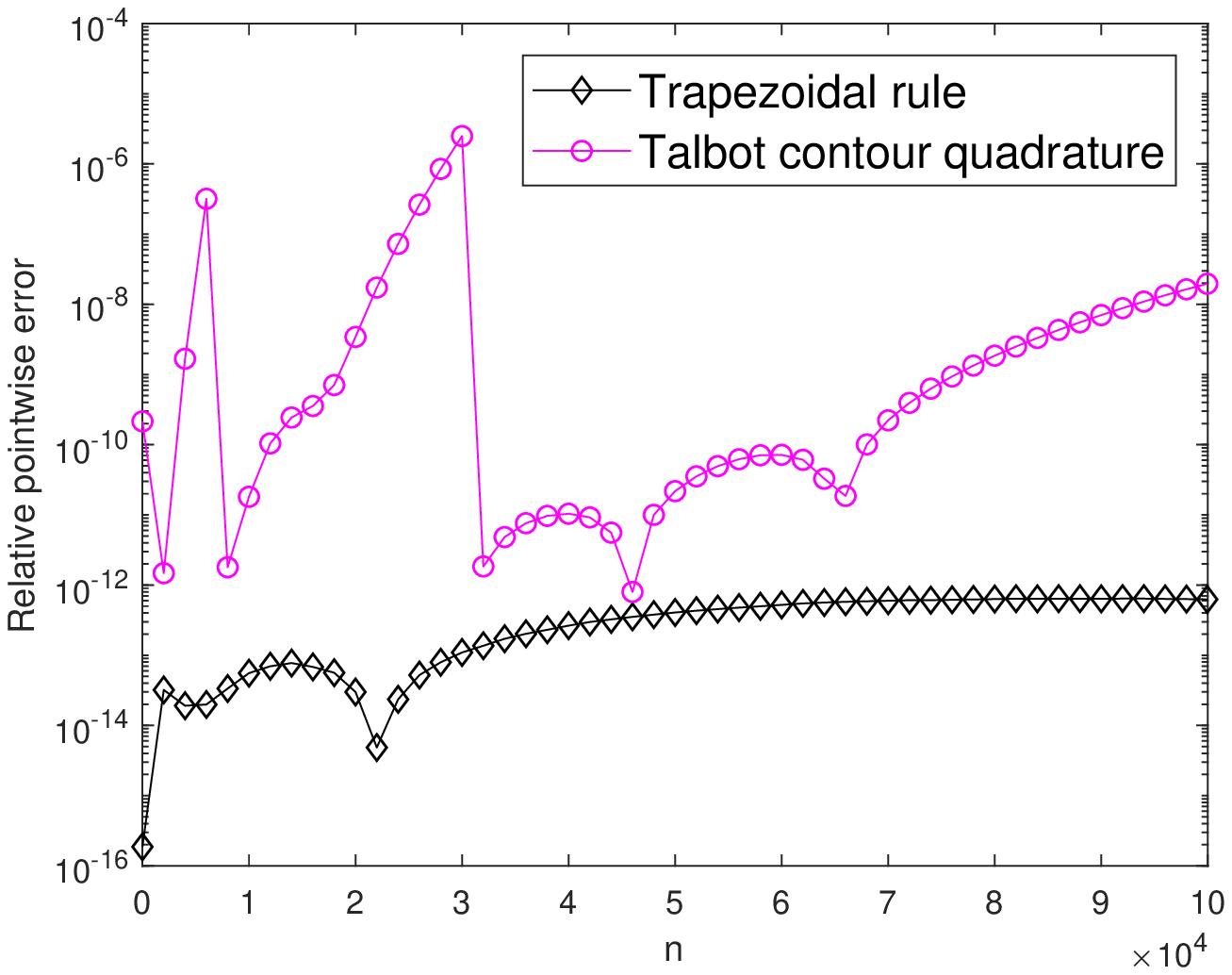,width=5.3cm}
\par {(d) $\alpha=1.5,\sigma=0.5$. }
\end{minipage}
\end{center}
\caption{The relative errors
$|\omega_n^{(\alpha,\sigma)}-\widehat{\omega}_n^{(\alpha,\sigma)}|/|\omega_n^{(\alpha,\sigma)}|$
(circles)
and
$|\omega_n^{(\alpha,\sigma)}-\widetilde{\omega}_n^{(\alpha,\sigma)}|/|\omega_n^{(\alpha,\sigma)}|$
(diamonds), Example \ref{eg21}, $B=5$, $\tau=0.01$.
The number of quadrature points for discretizing each Talbot contour quadrature is
 $N=64$; the number of quadrature points for the trapezoidal rule is
 $Q=N\times L=64\times 7=448$. \label{eg51fig3}}
\end{figure}

In the following section, we apply Fast Method II to solve
a number of time-fractional differential models.

\section{Numerical examples}\label{sec:numerical}
In this section, two examples are presented to verify the effectiveness
of the present fast convolution.
In the direct methods for solving FDEs in this section,
the (tempered) fractional operators in the considered FDEs are always discretized by
$D_{\tau}^{\alpha,\sigma,\gamma,m,n}$ (see \eqref{FLMM})
with the convolution quadrature weights defined by \eqref{GNGF-S} with $p=2$,
i.e.,  GNGF-$2$ is applied.
For convenience, we define
\begin{equation}\label{sec5:eq-0}
{}_FD_{\tau}^{\alpha,\sigma,\gamma,m,n}u
={}_Fu^{(\alpha,\sigma)}_n
+\tau^{-\alpha}\sum_{k=1}^mw^{(\alpha,\sigma)}_{n,k}(u_k-u_0) - b_n^{(\alpha,\sigma)}u_0,
\end{equation}
where $b_n^{(\alpha,\sigma)}=\tau^{-\alpha}\sum_{j=0}^n\omega^{(\alpha,\sigma)}_{j}$
and ${}_Fu^{(\alpha,\sigma)}_n$ is defined by \eqref{real-int-7-2}.

All the algorithms are implemented using MATLAB
2017b, which were run in a 3.40 GHz PC having 16GB RAM and Windows 7 operating system.
\begin{example}\label{s5-eg-1}
Consider the following scalar FODE
\begin{equation}\label{sec5:eq-1}
D^{\sigma,\alpha}_{0,t}(u(t)-u(0)) = -u(t) + f(u,t),{\quad}u(0)=u_0,{\quad}t\in(0, T],
\end{equation}
where $0<\alpha \leq 1$ and $\sigma \geq 0$.
\end{example}

Let $U_n$ be the numerical solution of \eqref{sec5:eq-1}.
The fully implicit fast method  for solving \eqref{sec5:eq-1} is given by
\begin{equation}\label{scheme1}
{}_FD_{\tau}^{\alpha,\sigma,\gamma,m,n}U=-U_n + f(U_n,t_n), \quad U_0=u_0,
\end{equation}
where ${}_FD_{\tau}^{\alpha,\sigma,\gamma,m,n}$ is defined by \eqref{sec5:eq-0}, $m$ is the number
of correction terms.

We need to know $U_k(1\leq k\leq m)$ when \eqref{scheme1} is applied.
In this paper, $U_k(1\leq k\leq m)$ are obtained by solving \eqref{scheme1} with a
small step size $2^{-7}\tau$ and $m=0$ or $m=1$ if there is at least one correction term.
When we say the direct method is applied, we mean that ${}_FD_{\tau}^{\alpha,\sigma,\gamma,m,n}$
in \eqref{scheme1} is replaced by $D_{\tau}^{\alpha,\sigma,\gamma,m,n}$.
The Newton method is applied to solve the nonlinear system \eqref{scheme1} to obtain $U_n$.

The following two cases are considered in this example.
\begin{itemize}
  \item Case I: For the linear case of $f=0$, the exact solution of \eqref{sec5:eq-1} is
  $$u(t)=E_{\alpha}(-t^{\alpha})e^{-\sigma t},$$
  where $E_{\alpha}(t)$ is the  Mittag--Leffler function defined by
$E_{\alpha}(t)=\sum_{k=0}^{\infty}\frac{t^{k}}{\Gamma(k\alpha+1)}.$
  \item Case II: Let $f=u(1-u^2)$ and  $u_0=1$.
\end{itemize}

The maximum error is defined by
$$\|e\|_{\infty}=\max_{0\leq n \leq {T}/\tau}\left|e_n\right|,  {\quad}e_n=u(t_n)-U_n,\,\,T=10.$$

We first show that  the use of the correction terms decreases the global error of
the method significantly for Case I. From Tables \ref{s5:tb1}--\ref{s5:tb3}, we can see
that increasing the number of correction terms improves the accuracy significantly,
and second-order accuracy is observed for some suitable $m$.
Numerical simulations show that the inaccurate numerical solutions near the origin
weakly affect the numerical solutions far from the origin. We show the numerical solutions
at $t=10$ for $\sigma=0.2$ and $0.5$ in Table \ref{s5:tb4}. We can see that
much better numerical solutions are obtained even if no correction term is added
and second-order accuracy is observed using one or two correction terms.

\begin{table}[!h]
\caption{{The maximum error $\|e\|_{\infty}$ for Example \ref{s5-eg-1}, Case I, $\gamma_k=k\alpha$,  $\alpha=0.5$,  ${T=10}$, $N=256$, and $\sigma=0$.}}\label{s5:tb1}
\centering\footnotesize
\begin{tabular}{|c|c|c|c|c|c|c|c|c|c|c|c|c|}
\hline
 $\tau$ & $m=0$ & Order& $m=1$ & Order& $m=2$ & Order & $m=3$   & Order\\
 \hline
$2^{-5}$ &4.8036e-2&      &4.7715e-4&      &2.5974e-5&      &1.9848e-5&      \\
$2^{-6}$ &3.5869e-2&0.4214&2.5331e-4&0.9135&1.2654e-5&1.0374&6.9865e-6&1.5064\\
$2^{-7}$ &2.6373e-2&0.4437&1.3164e-4&0.9443&5.6593e-6&1.1609&2.3047e-6&1.6000\\
$2^{-8}$ &1.9175e-2&0.4598&6.7484e-5&0.9640&2.3617e-6&1.2608&7.2179e-7&1.6749\\
$2^{-9}$ &1.3830e-2&0.4714&3.4294e-5&0.9766&9.3879e-7&1.3309&2.1679e-7&1.7353\\
\hline
\end{tabular}
\end{table}

\begin{table}[!h]
\caption{{The maximum error $\|e\|_{\infty}$  for Example \ref{s5-eg-1}, Case I, $\gamma_k=k\alpha$,  $\alpha=0.5$, $N=256$, and $\sigma=0.5$.}}\label{s5:tb3}
\centering\footnotesize
\begin{tabular}{|c|c|c|c|c|c|c|c|c|c|c|c|c|}
\hline
 $\tau$ & $m=0$ & Order& $m=1$ & Order& $m=3$ & Order & $m=5$   & Order\\
 \hline
$2^{-5}$&4.8581e-2&      &2.1708e-4&      &4.1006e-5&      &3.6445e-5&      \\
$2^{-6}$&3.6270e-2&0.4216&1.2066e-4&0.8473&1.1085e-5&1.8873&1.1950e-5&1.6087\\
$2^{-7}$&2.6622e-2&0.4462&6.4140e-5&0.9116&2.9336e-6&1.9178&3.5868e-6&1.7363\\
$2^{-8}$&1.9318e-2&0.4627&3.3264e-5&0.9473&7.6422e-7&1.9406&1.0161e-6&1.8196\\
$2^{-9}$&1.3908e-2&0.4740&1.7008e-5&0.9677&2.3723e-7&1.6877&2.7697e-7&1.8753\\
\hline
\end{tabular}
\end{table}

\begin{table}[!h]
\caption{The absolute error  $\left|e_n\right|$
at   ${t=10}$, Example \ref{s5-eg-1}, Case I, $\gamma_k=k\alpha$,  $\alpha=0.5$, $N=256$.}\label{s5:tb4}
\centering\footnotesize
$\sigma=0.2$\\
\begin{tabular}{|c|c|c|c|c|c|c|c|c|c|c|c|c|}
\hline
 $\tau$ & $m=0$ & Order& $m=1$ & Order& $m=2$ & Order  \\
 \hline
$2^{-5}$&1.5240e-5&      &6.0747e-6&      &1.4693e-5&        \\
$2^{-6}$&7.9796e-6&0.9335&1.5738e-6&1.9486&4.0865e-6&1.8462\\
$2^{-7}$&4.0758e-6&0.9692&4.0084e-7&1.9731&1.1023e-6&1.8903\\
$2^{-8}$&2.0580e-6&0.9859&1.0057e-7&1.9948&2.9090e-7&1.9220\\
$2^{-9}$&1.0335e-6&0.9937&2.4856e-8&2.0166&7.5573e-8&1.9446\\
\hline
\end{tabular}\\
$\sigma=0.5$\\
\begin{tabular}{|c|c|c|c|c|c|c|c|c|c|c|c|c|}
\hline
 $\tau$ & $m=0$ & Order& $m=1$ & Order& $m=2$ & Order  \\
 \hline
$2^{-5}$&2.0238e-5&      &6.0702e-5&      &4.1007e-5&        \\
$2^{-6}$&5.0097e-6&2.0142&1.5645e-5&1.9560&1.0668e-5&1.9426\\
$2^{-7}$&1.2174e-6&2.0409&3.9910e-6&1.9709&2.7506e-6&1.9554\\
$2^{-8}$&2.8564e-7&2.0916&1.0114e-6&1.9804&7.0398e-7&1.9661\\
$2^{-9}$&6.1912e-8&2.2059&2.5521e-7&1.9866&1.7910e-7&1.9748\\
\hline
\end{tabular}
\end{table}


For Case II,  the explicit form of the analytical solution is unknown, and numerical solutions
are shown in Figure \ref{eg1fig1}.  For a fixed fractional order $\alpha=0.3$,
the solution decays slower and attains a steady state as $\sigma$ increases, see Figure  \ref{eg1fig1}(a).
We observe similar behavior for $\alpha=0.8$, see Figure  \ref{eg1fig1}(b).
For other fractional orders $\alpha\in (0,1)$, we observe similar results, which are not shown here.
Figures \ref{eg1fig2} (a)--(c) show the difference between the fast solution and the direct solution.
We can see that the two solutions are very close, which means the error caused from the trapezoidal \eqref{real-int-3}
rule in Fast Method II is very small.  Figure  \ref{eg1fig2} (d) shows the computational time of
the fast method and the direct method, and we observe that the fast method really outperforms
the direct method in efficiency and saves computational cost.
The advantage of the fast method will be further displayed in the following example,
solving a  time-fractional  activator-inhibitor system.

\begin{figure}[!h]
\begin{center}
\begin{minipage}{0.47\textwidth}\centering
\epsfig{figure=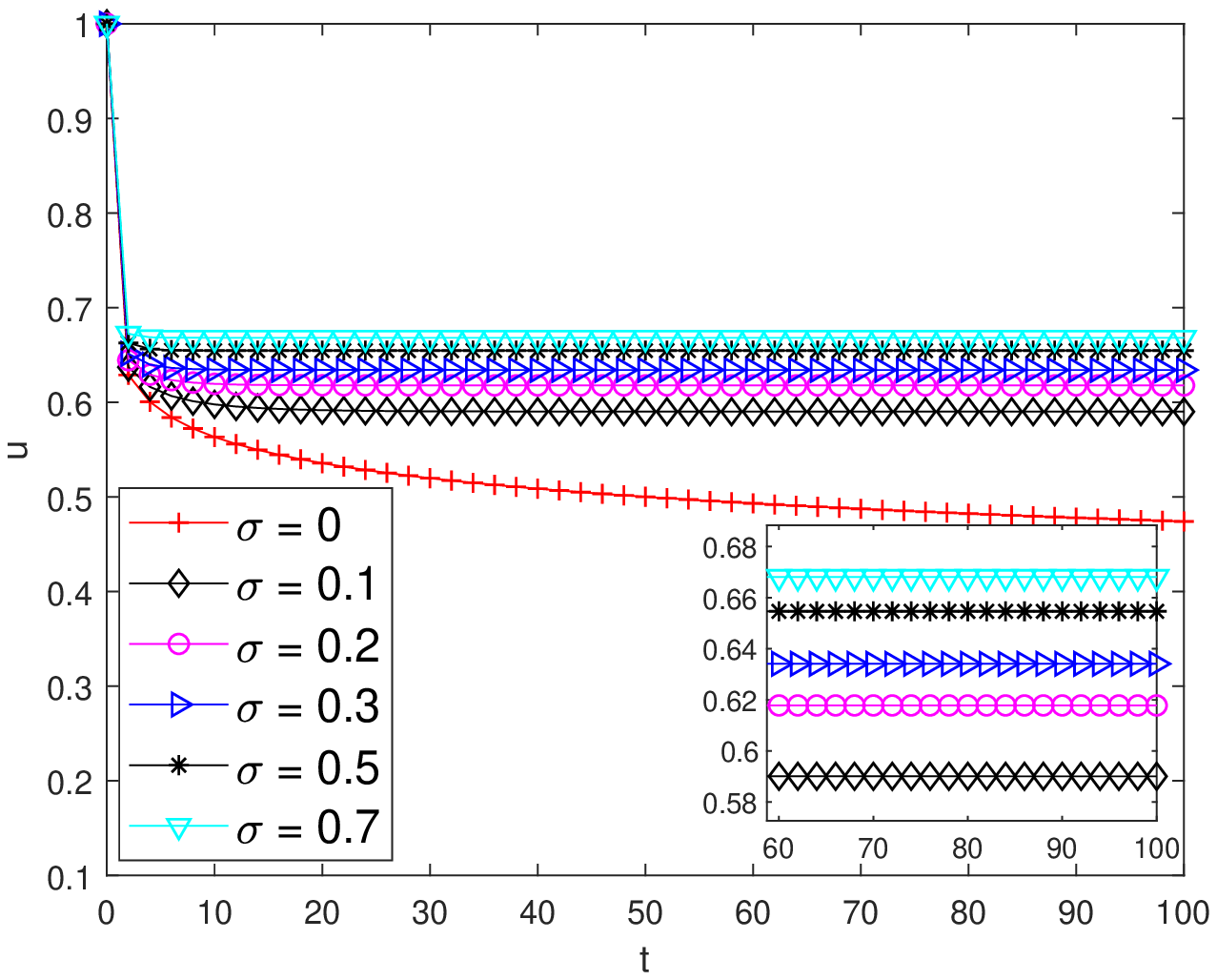,width=5.3cm} \par {(a) $\alpha=0.3$.}
\end{minipage}
\begin{minipage}{0.47\textwidth}\centering
\epsfig{figure=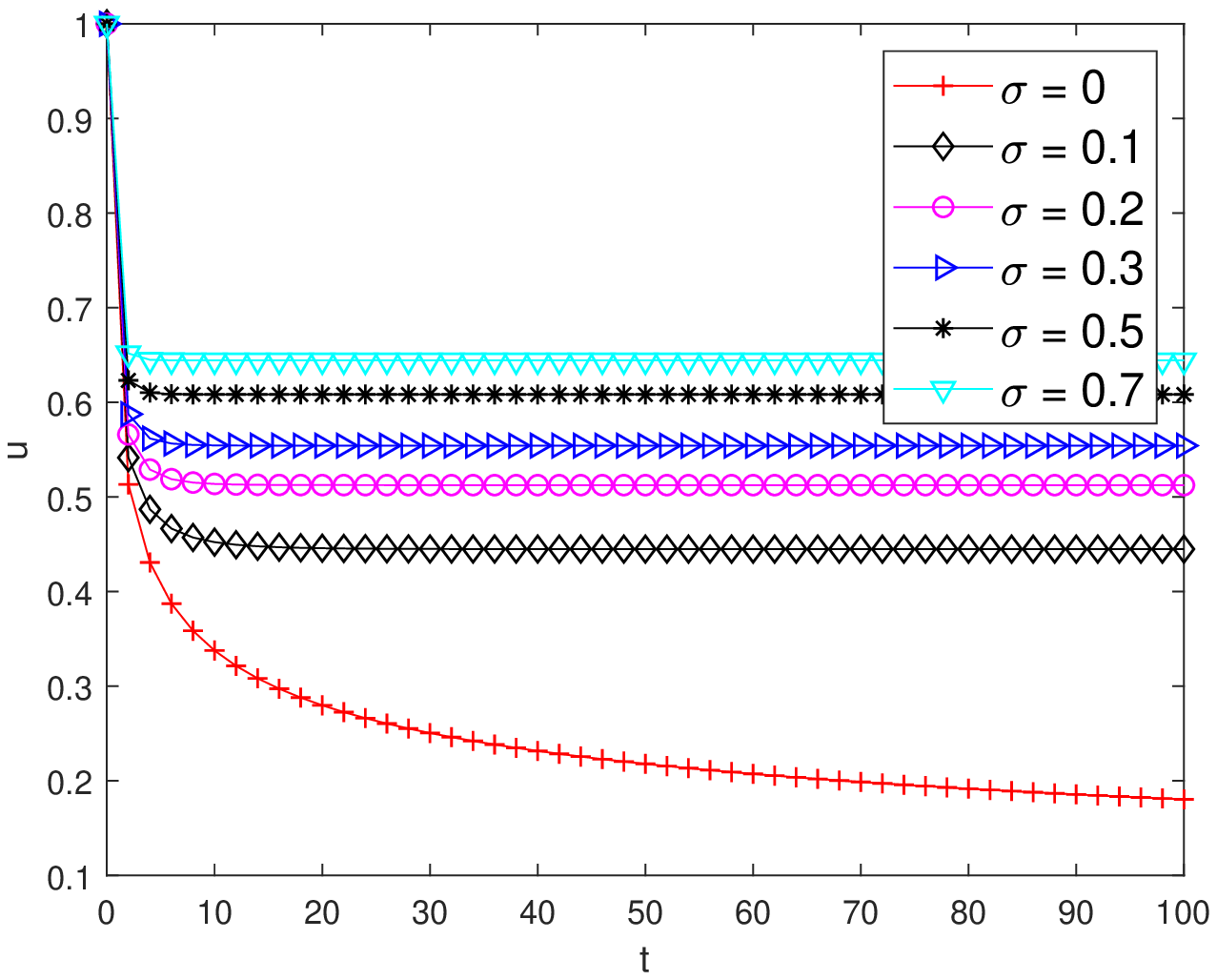,width=5.3cm} \par {(b)   $\alpha=0.8$.}
\end{minipage}
\end{center}
\caption{Numerical solutions for Example \ref{s5-eg-1}, Case II,    $\tau=0.001,N=256$.\label{eg1fig1}}
\end{figure}

\begin{figure}[!h]
\begin{center}
\begin{minipage}{0.47\textwidth}\centering
\epsfig{figure=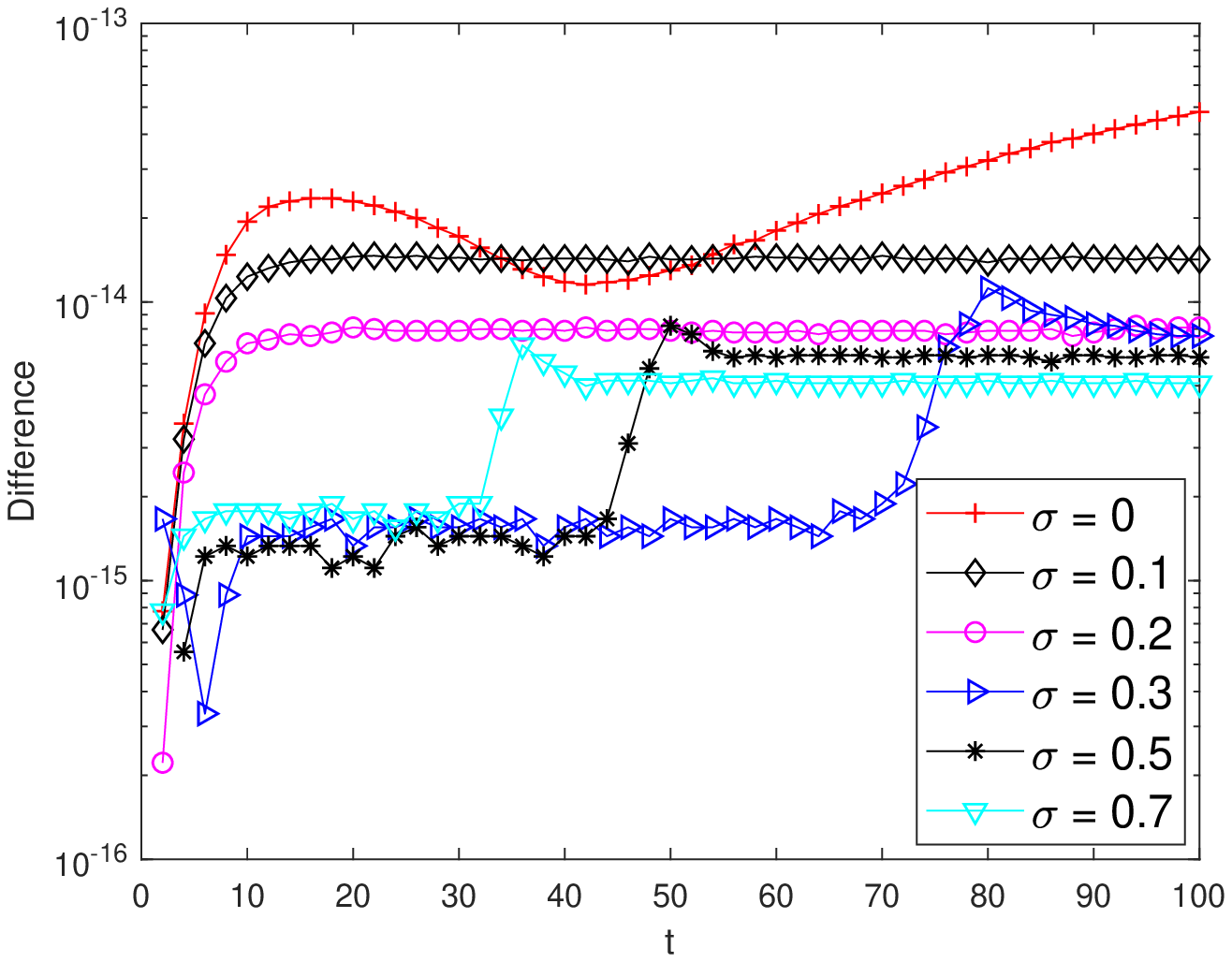,width=5.3cm} \par {(a) $\alpha=0.3$.}
\end{minipage}
\begin{minipage}{0.47\textwidth}\centering
\epsfig{figure=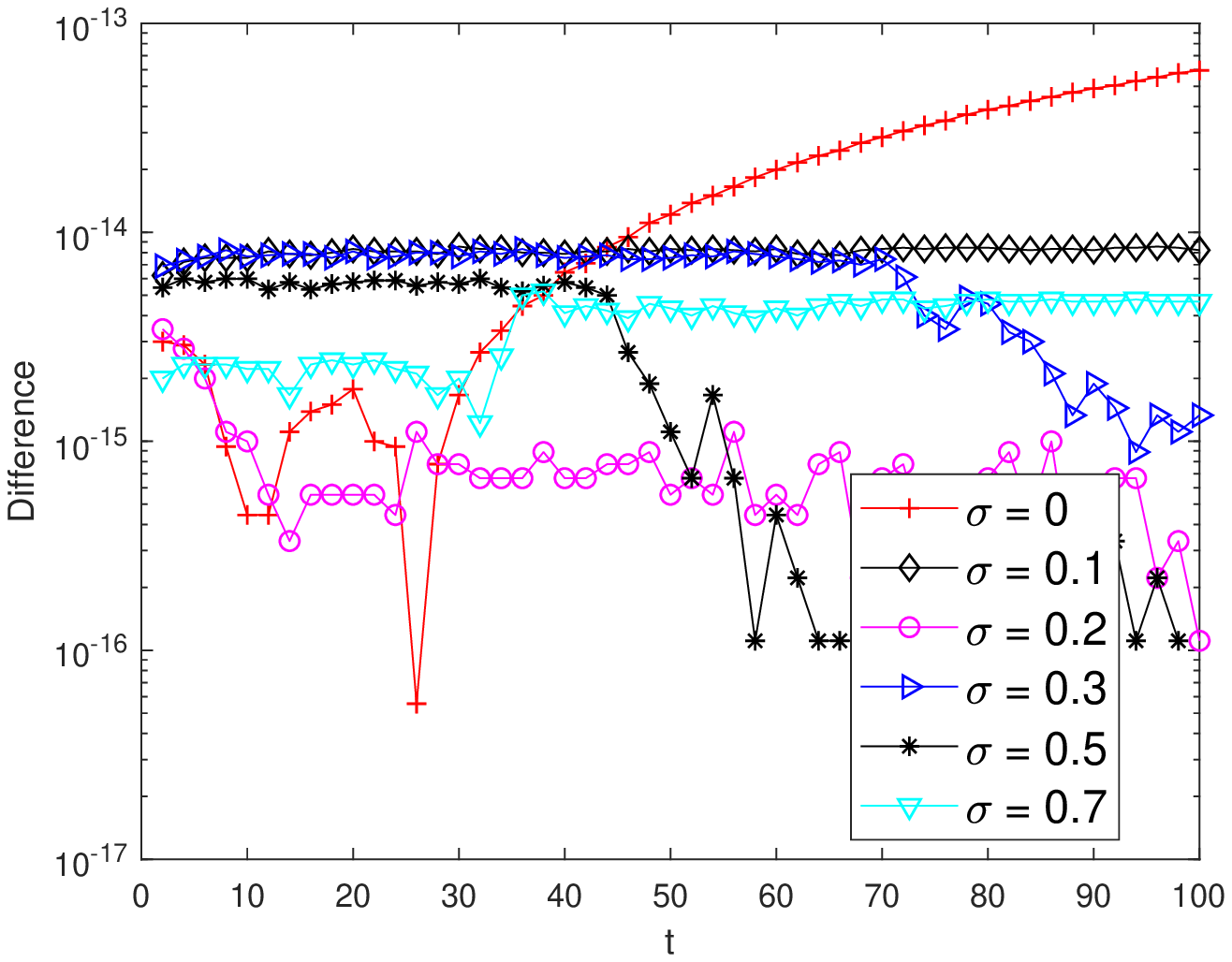,width=5.3cm} \par {(b) $\alpha=0.5$.}
\end{minipage}
\begin{minipage}{0.47\textwidth}\centering
\epsfig{figure=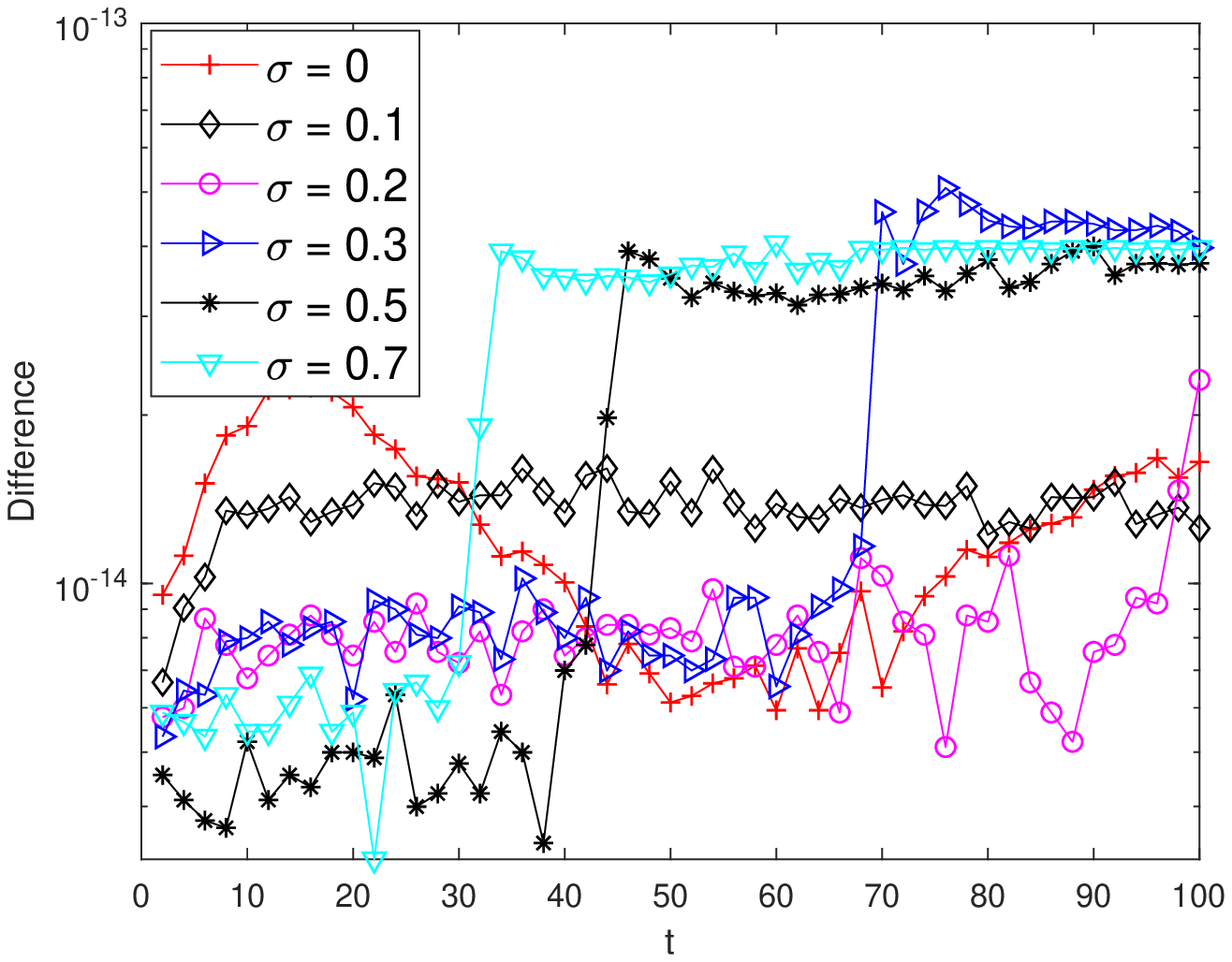,width=5.3cm} \par {(c) $\alpha=0.8$.}
\end{minipage}
\begin{minipage}{0.47\textwidth}\centering
\epsfig{figure=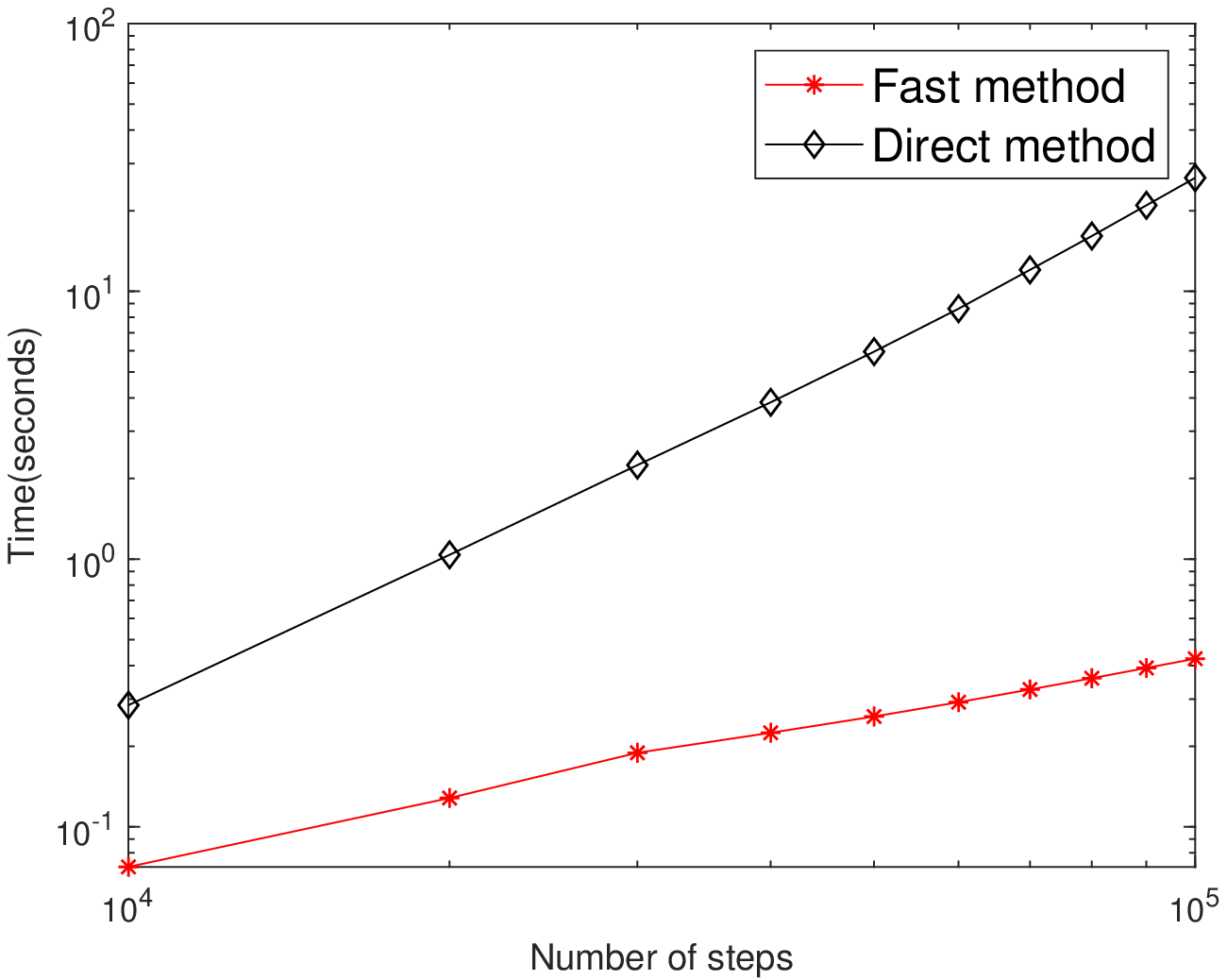,width=5.3cm} \par {(d) Computational time.}
\end{minipage}
\end{center}
\caption{(a)-(c): the  difference between the numerical solutions of the direct method
and the fast method; (d): the computational time of the fast method and direct method;
Example \ref{s5-eg-1}, Case II,  $\tau=0.001,Q=256$.\label{eg1fig2}}
\end{figure}

%

\begin{example}\label{s5-eg-2}
Consider the   fractional activator-inhibitor system   \cite{Henry02}
\begin{eqnarray}
\px[t] u(x,t) &=&\kappa f_1(u,v)+{}_{RL}D_{0,t}^{1-\alpha_1}\px^2u(x,t),
{\quad}0\leq x \leq D,\label{sec5:eq-21}\\
\px[t] v(x,t)&=&\kappa f_2(u,v)+d\,{}_{RL}D_{0,t}^{1-\alpha_2}\px^2v(x,t),
{\quad}0\leq x \leq D,\label{sec5:eq-22}
\end{eqnarray}
where $u(x,t)$ and $v(x,t)$ denote the concentrations of the activator and inhibitor, respectively, $0\le\alpha_1\leq 1$ is the anomalous
diffusion exponent of the activator, and $0\le\alpha_2\leq 1$ is the anomalous diffusion exponent of the inhibitor, $d$ is the ratio of the diffusion
coefficients of inhibitor to activator, and $\kappa> 0$ is a scaling variable that can be interpreted as the characteristic size of the spatial domain
or as the relative strength of the reaction terms.  The reaction
kinetics is defined by the functions $f_1(u,v)$ and $f_2(u,v)$.

In our following numerical test, we will consider the Turing pattern formation in the fractional activator-inhibitor model system described by system \eqref{sec5:eq-21}--\eqref{sec5:eq-22} with zero-flux boundary conditions at both ends of the spatial domain of length $D$, i.e.
\begin{equation}\label{sec5:eq-24}
\px[x]u(0,t)=\px[x]v(0,t)=0,\quad
\px[x]u(D,t)=\px[x]v(D,t)=0.
\end{equation}
\end{example}

We apply cubic finite element to approximate the space of \eqref{sec5:eq-21}--\eqref{sec5:eq-22}.
For the time discretization,  we apply a stabilized  semi-implicit time-stepping method,
i.e., the first-order time derivative  is discretized by the  second-order backward
difference formula, the time-fractional derivative  is discretized by
the second-order generalized Newton--Gregory formula, and the nonlinear term is
approximated using a second-order extrapolation with a stablization factor.

Let $X_h$ be a cubic piecewise finite element space defined on
the uniform grids $\{x_i\}$, where $x_i=ih$, $h$ is space stepsize, and $D/h$ is a positive integer.
The numerical scheme for  \eqref{sec5:eq-21}--\eqref{sec5:eq-24}
is given by: For $2\leq n \leq n_T$, find $u_h^n,v_h^n\in X_h$, such that
\begin{eqnarray}
&&(D^{n}_{\tau}u_h,w) +({}_FD_{\tau}^{1-\alpha_1,0,0,0,n}\px u_h,\px w)
+b_n^{(1-\alpha_1,0)}(\px u_h^0,\px w)\nonumber\\
=&&\kappa (2F_1^{n-1}-F_1^{n-2},w)-\kappa_1(u_h^n-2u_h^{n-1}+u_h^{n-2},w),
\quad\forall w\in X_h, \label{sec5:eq-21-1}\\
&&(D^{n}_{\tau}v_h,w)+d({}_FD_{\tau}^{1-\alpha_2,0,0,0,n}\px v_h,\px w)
+db_n^{(1-\alpha_2,0)}(\px v_h^0,\px w)\nonumber\\
=&&\kappa (2F_2^{n-1}-F_2^{n-2},w)-\kappa_2(v_h^n-2v_h^{n-1}+v_h^{n-2},w),
\quad\forall w\in X_h,\\
&&(u_h^0,w) = (u(0),w), \quad (u_h^1,w) = (u(0)+\tau\pt u(0),w),\quad\forall w\in X_h,\\
&&(v_h^0,w) = (v(0),w),\quad (v_h^1,w) = (v(0)+\tau\pt v(0),w),\quad\forall w\in X_h,\label{sec5:eq-22-1}
\end{eqnarray}
where
$F_1^n=f_1(u^n_h,v_h^n)$, $F_2^n=f_2(u^n_h,v_h^n)$,
$u(t)=u(x,t)$, $v(t)=v(x,t)$,
$\kappa_1$ and $\kappa_2$ are positive numbers that stabilize the time-stepping method, $D^{n}_{\tau}u_h= (3u_h^n-4u_h^{n-1}+u_h^{n-2})/(2\tau)$,
${}_FD_{\tau}^{\alpha,0,0,0,n}$ and $b_n^{(\alpha,\sigma)}$ are defined in \eqref{sec5:eq-0}.

Two kinds of reaction kinetics, Gierer--Meinhardt and Brusselator, will be considered for the fractional activator-inhibitor model system.
We consider the same initial conditions as those in \cite{Henry05},
which take the forms $u(x,0)=u^*+\epsilon r_1(x)$ and $v(x,0)=v^*+\epsilon r_2(x)$.
Three different types of perturbation are considered here:
(i) random, where $r_j(x)$ is a uniform random function on the interval $[-1,1]$;
(ii) long-wavelength sinusoidal, $r_1(x)=r_2(x)=\epsilon\sin(qx)$,
with $q=0.4$ (Gierer--Meinhardt) or $q=0.5$ (Brusselator);
(iii) short-wavelength sinusoidal, $r_1(x)=r_2(x)=\epsilon\sin(qx)$,
with $q=5$ (both Gierer--Meinhardt and Brusselator).
We set $\epsilon=0.01$ in each case.

\begin{itemize}[leftmargin=*]
  \item  \textbf{Gierer--Meinhardt reaction kinetics}. For
  the Gierer-Meinhardt reaction kinetics, $f_1$ and $f_2$ are given by
\begin{eqnarray}
f_1(u,v)&=&1-u+3{u^2}/{v},\label{sec5:eq-25}\\
f_2(u,v)&=&u^2-v.\label{sec5:eq-26}
\end{eqnarray}

The fractional activator-inhibitor model system defined by \eqref{sec5:eq-21}--\eqref{sec5:eq-24} and \eqref{sec5:eq-25}--\eqref{sec5:eq-26}
has a homogeneous steady state of $u^*=4$ and $v^*=16$. Standard linear stability analysis \cite{Murray03,Henry02,Henry05} reveals that in the case of standard diffusion
$\alpha_1=\alpha_2=1$ nonhomogeneous steady states can occur if the value of $d$ exceeds the critical value $d^*\approx 19.79$, while for $d\le d^*$ initial perturbations
about the steady state decay to zero and no pattern results.
The critical value of $d^*$ for the fractional Gierer--Meinhardt reaction kinetics and the corresponding maximally excited modes over a range of $\alpha$ are listed in \cite{Henry05}.
\item  \textbf{Brusselator reaction kinetics}.
For the Brusselator reaction kinetics reaction kinetics, $f_1$ and $f_2$ are given by
\begin{eqnarray}
f_1(u,v)&=&2-3u+u^2v,\label{sec5:eq-27}\\
f_2(u,v)&=&2u-u^2v.\label{sec5:eq-28}
\end{eqnarray}
In this case, the homogeneous steady-state solution is given by $u^*=2$ and $v^*=1$. The critical value of $d$ for a turing instability is given by $d^*\approx 23.31$. It has been shown that the overall pattern of behavior is similar to that found for the fractional Gierer-Meinhardt model
\cite{Henry05}.
\end{itemize}

The parameters are taken as $h=D/256$, $D=100$, $\tau=0.01$, $\kappa_1=\kappa_2=10$ (Gierer--Meinhardt)
or $\kappa_1=\kappa_2=2$ (Brusselator)
when the numerical method \eqref{sec5:eq-21-1}--\eqref{sec5:eq-22-1} is applied.
For both Gierer--Meinhardt and Brusselator reaction kinetics, we take the same values
of $\alpha_1,\alpha_2$ and $d$ as in \cite{Henry05} in our simulations.

Figures  \ref{turing_pattern1} and \ref{turing_pattern2}  show the full surface profiles for
the concentrations of the
activator $u$ (left column) and inhibitor $v$ (right column) with randomly perturbed initial conditions
and $\alpha_1=\alpha_2=\alpha$, where the activator shows similar behavior as the inhibitor.
We obtain similar results as those in \cite{Henry05}:
i) The concentrations of the activator and inhibitor both fluctuate about the homogenous steady-state values.
ii) A spatiotemporal pattern develops on or before $t=500$. iii) The surface profiles
become more spatially rough and/or less stationary as the fractional order $\alpha$ decreases.

\begin{figure}[!h]
\begin{center}
\begin{minipage}{0.47\textwidth}\centering
\epsfig{figure=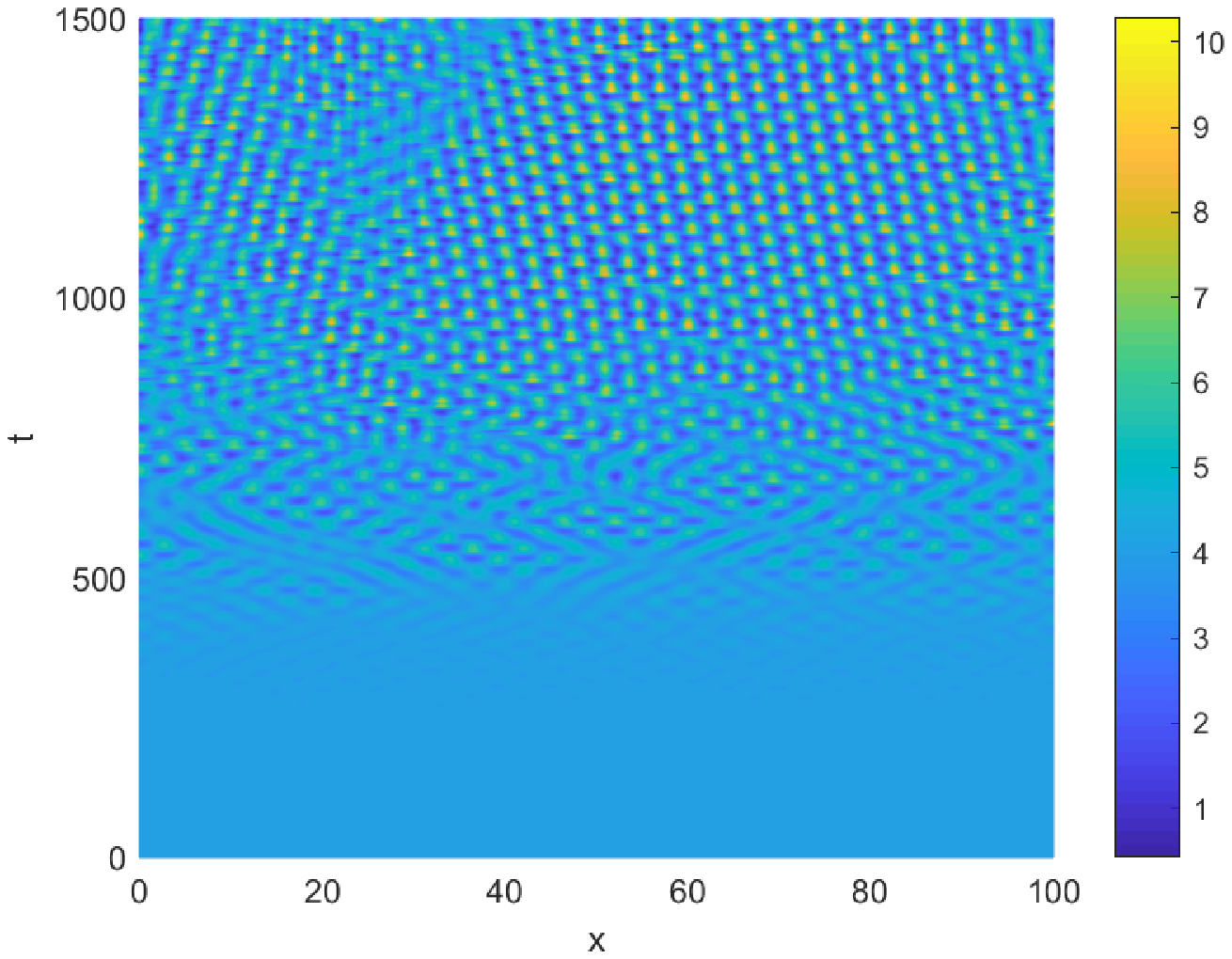,width=5cm} \par {(a) Surface profile of $u(x,t)$.}
\end{minipage}
\begin{minipage}{0.47\textwidth}\centering
\epsfig{figure=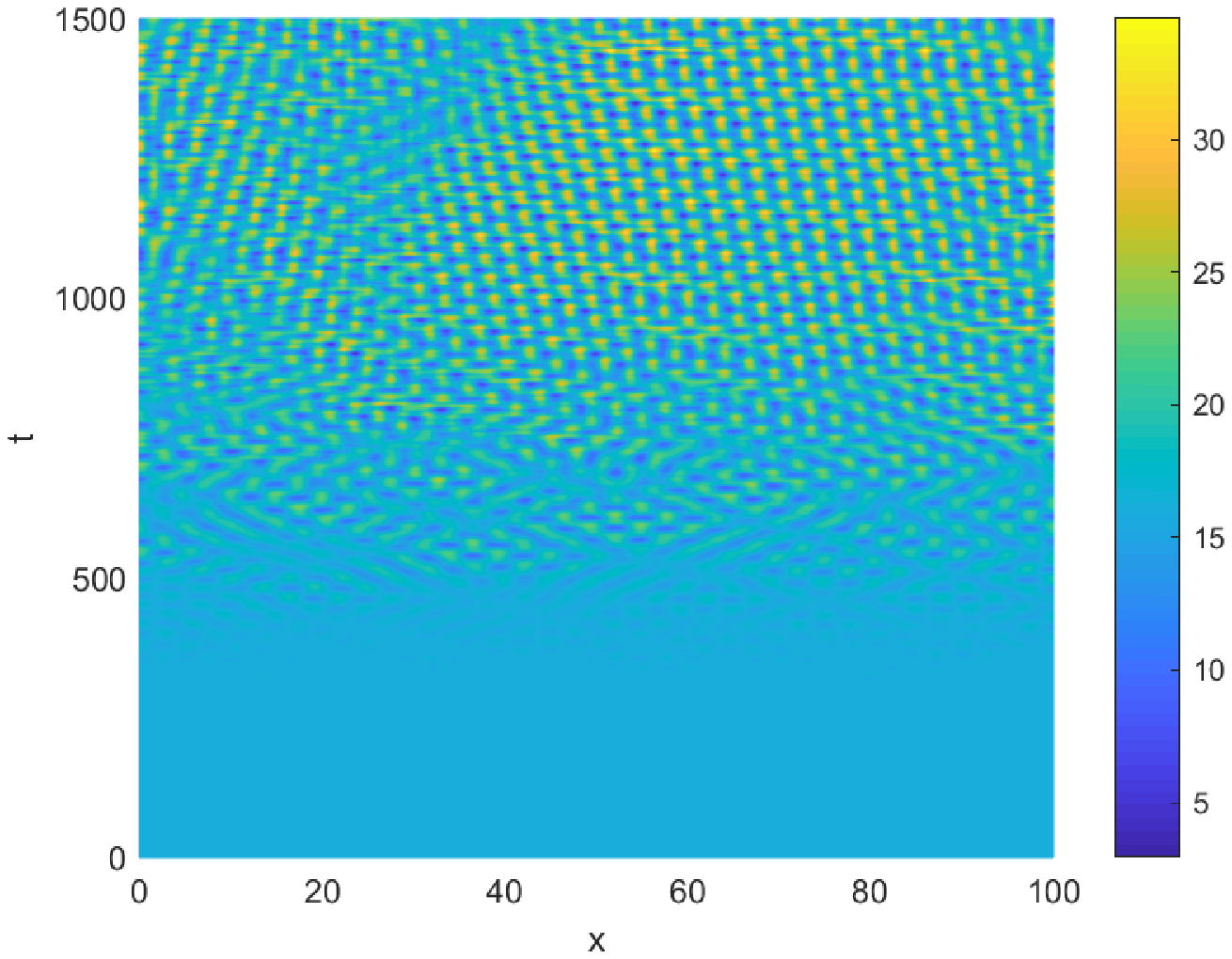,width=5cm} \par {(a) Surface profile of  $v(x,t)$.}
\end{minipage}
\begin{minipage}{0.47\textwidth}\centering
\epsfig{figure=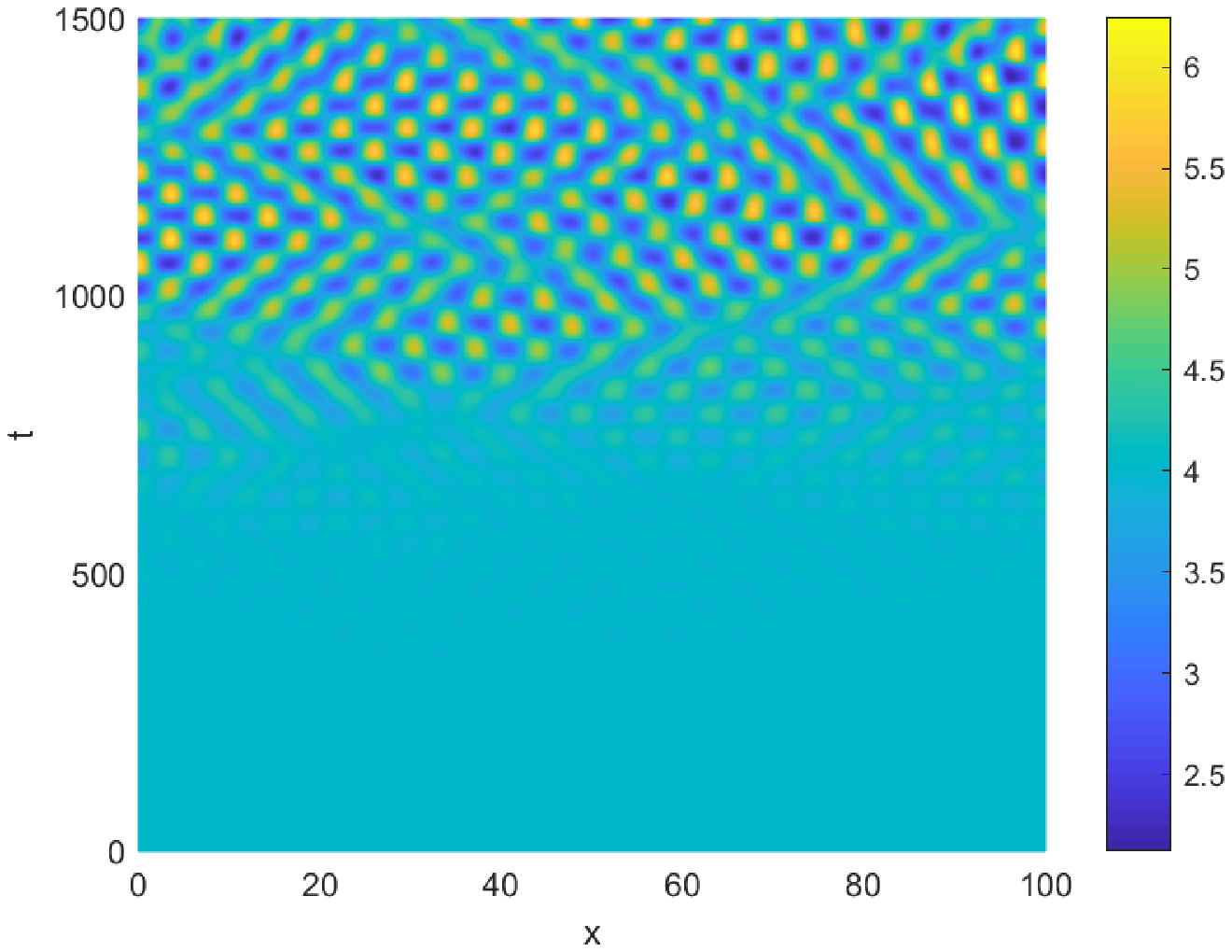,width=5cm} \par {(b) Surface profile of $u(x,t)$.}
\end{minipage}
\begin{minipage}{0.47\textwidth}\centering
\epsfig{figure=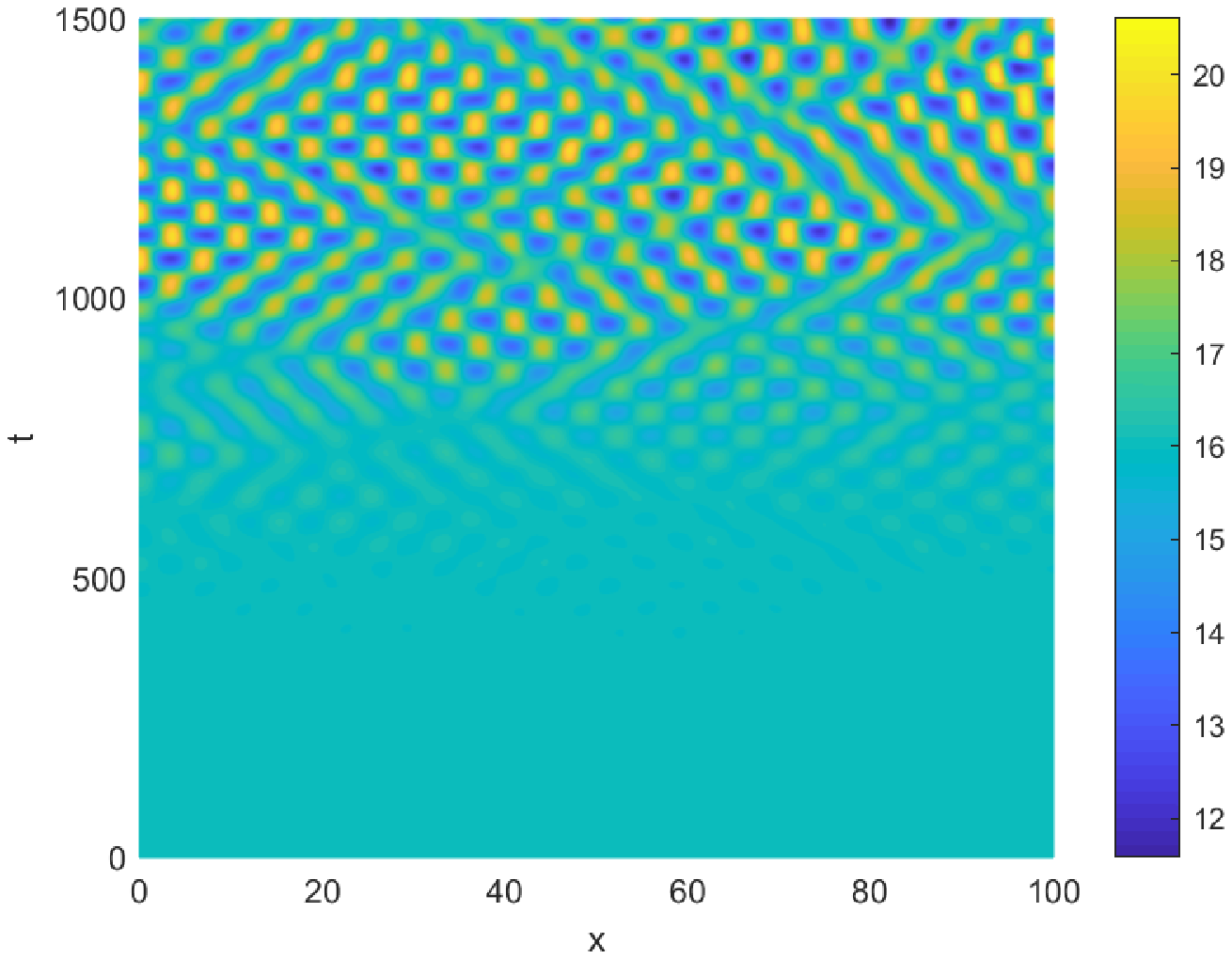,width=5cm} \par {(b) Surface profile of  $v(x,t)$.}
\end{minipage}
\begin{minipage}{0.47\textwidth}\centering
\epsfig{figure=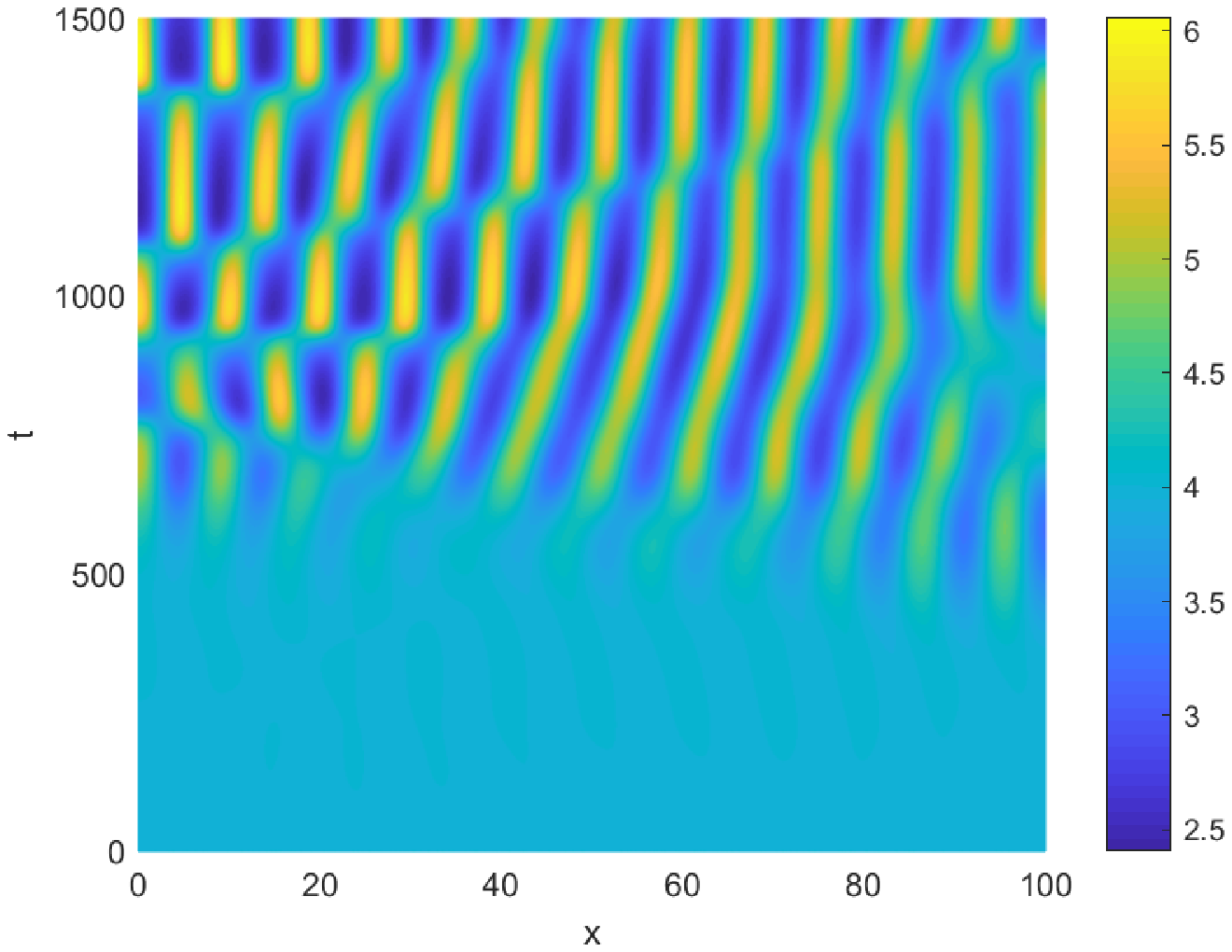,width=5cm} \par {(c) Surface profile of  $u(x,t)$.}
\end{minipage}
\begin{minipage}{0.47\textwidth}\centering
\epsfig{figure=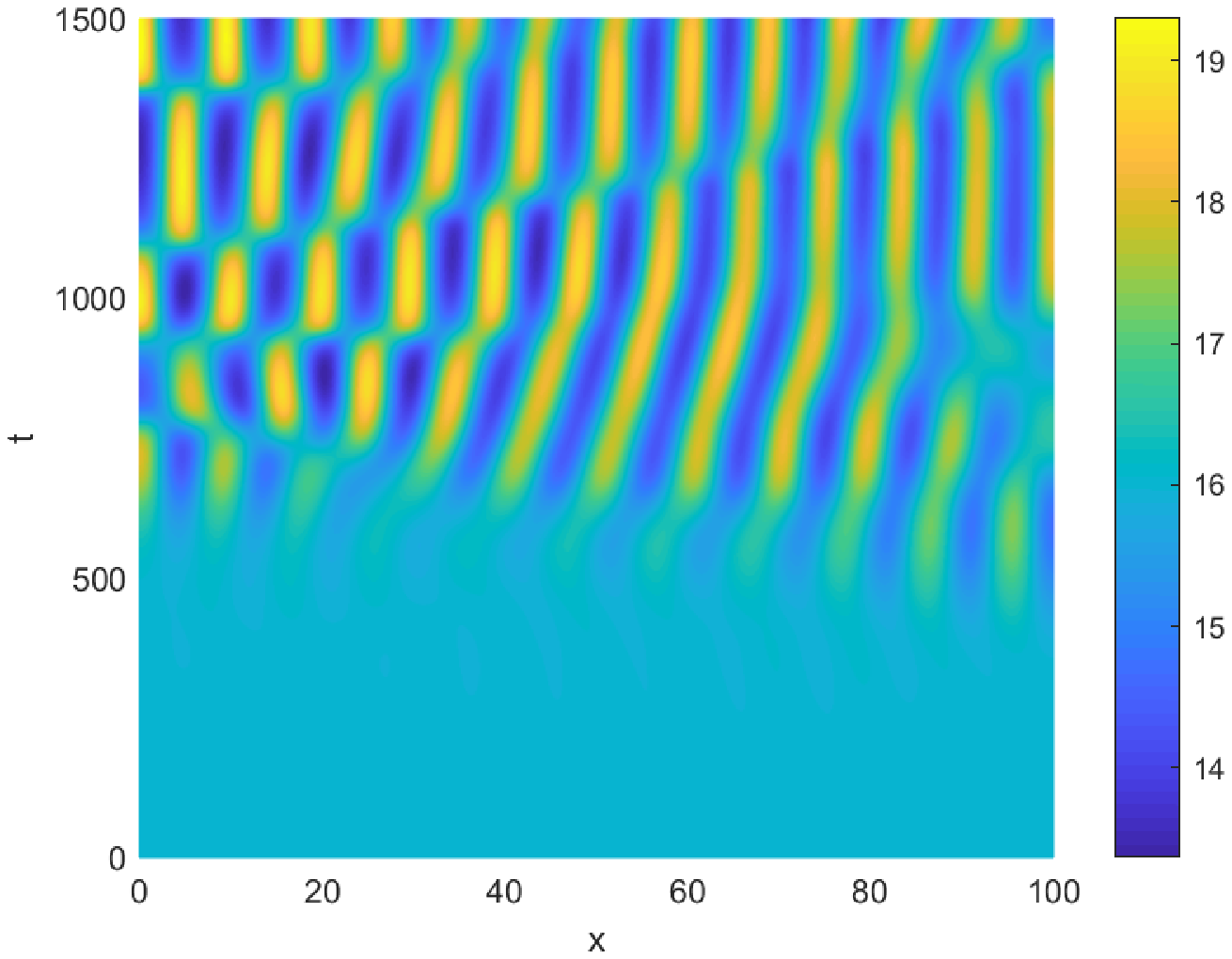,width=5cm} \par {(c) Surface profile of  $v(x,t)$.}
\end{minipage}
\end{center}
\caption{Fractional Gierer--Meinhardt model with randomly perturbed initial conditions (i),
Example \ref{s5-eg-2}: (a) $\alpha_1=\alpha_2=0.2$, $d=7$;
(b) $\alpha_1=\alpha_2=0.5$, $d=14$; (c) $\alpha_1=\alpha_2=0.8$, $d=21$.\label{turing_pattern1}}
\end{figure}

\begin{figure}[!h]
\begin{center}
\begin{minipage}{0.47\textwidth}\centering
\epsfig{figure=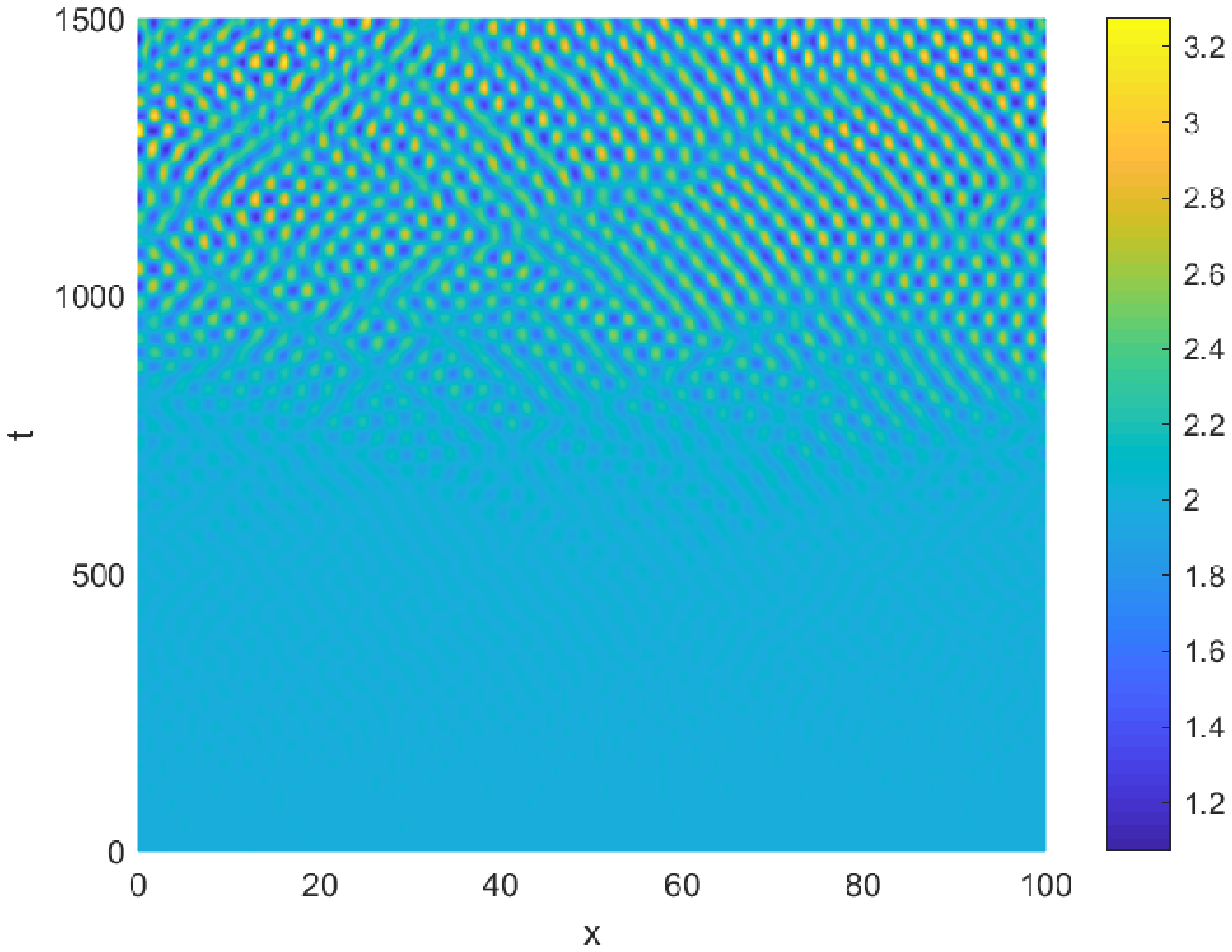,width=5cm} \par {(a) Surface profile of  $u(x,t)$.}
\end{minipage}
\begin{minipage}{0.47\textwidth}\centering
\epsfig{figure=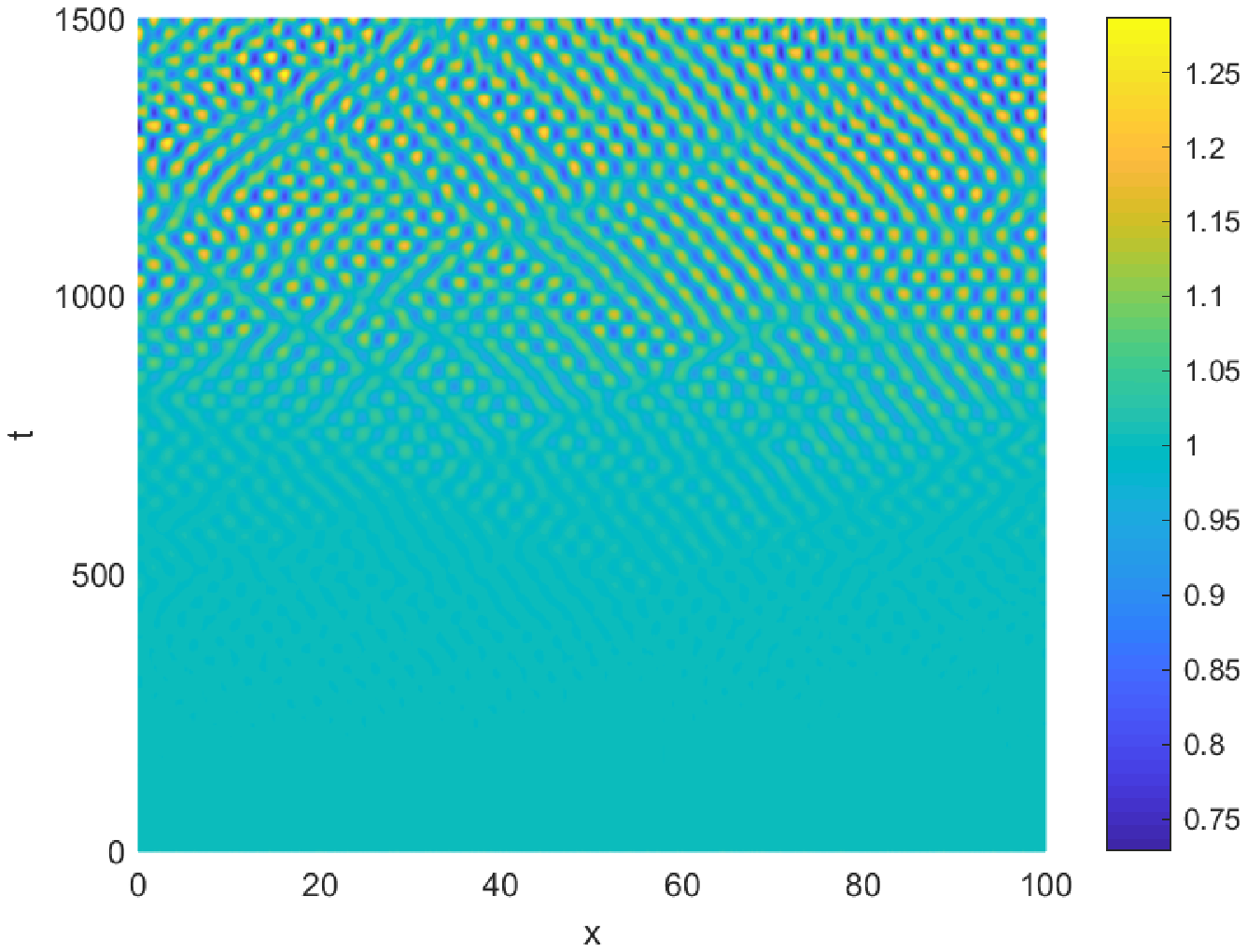,width=5cm} \par {(a) Surface profile of  $v(x,t)$.}
\end{minipage}
\begin{minipage}{0.47\textwidth}\centering
\epsfig{figure=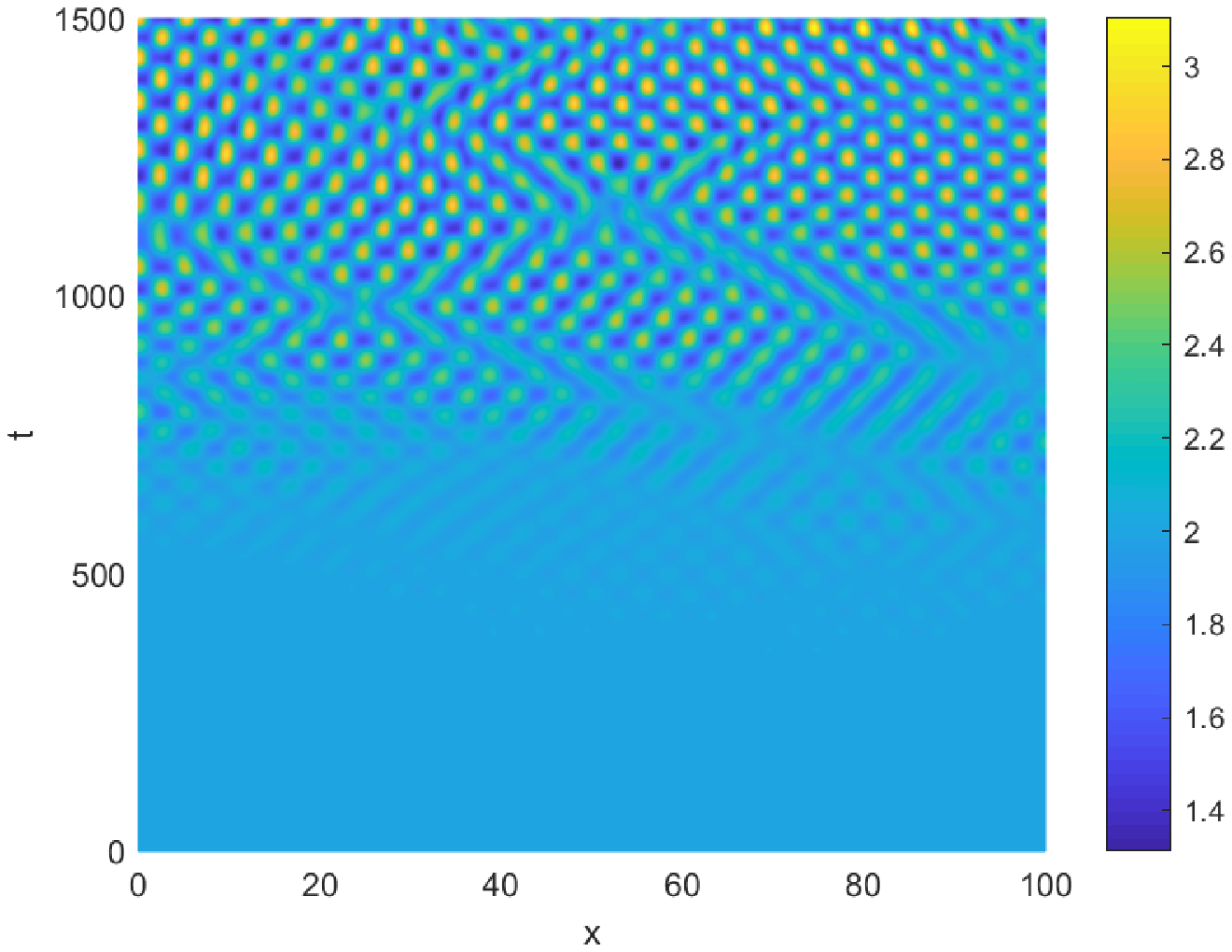,width=5cm} \par {(b) Surface profile of  $u(x,t)$.}
\end{minipage}
\begin{minipage}{0.47\textwidth}\centering
\epsfig{figure=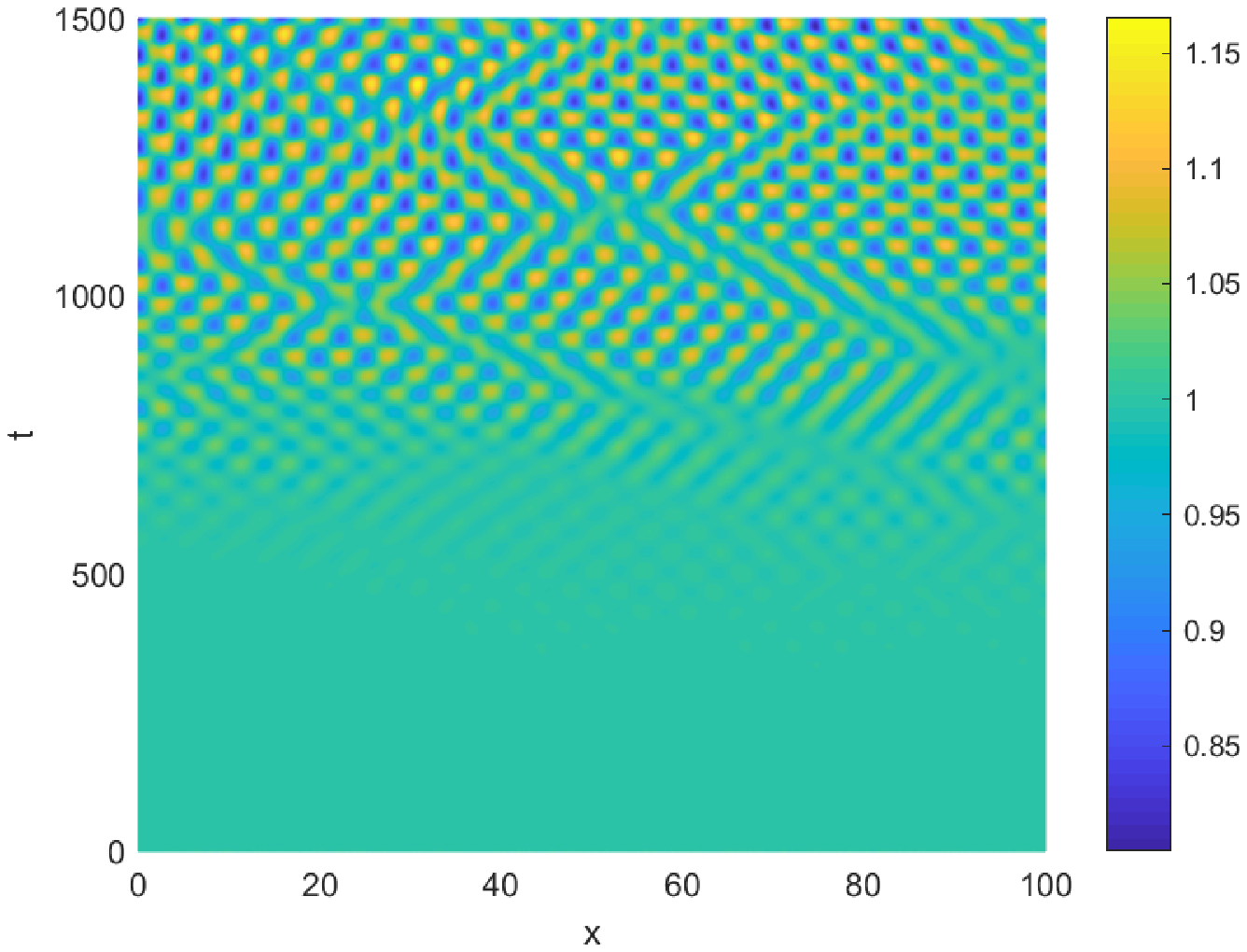,width=5cm} \par {(b) Surface profile of  $v(x,t)$.}
\end{minipage}
\begin{minipage}{0.47\textwidth}\centering
\epsfig{figure=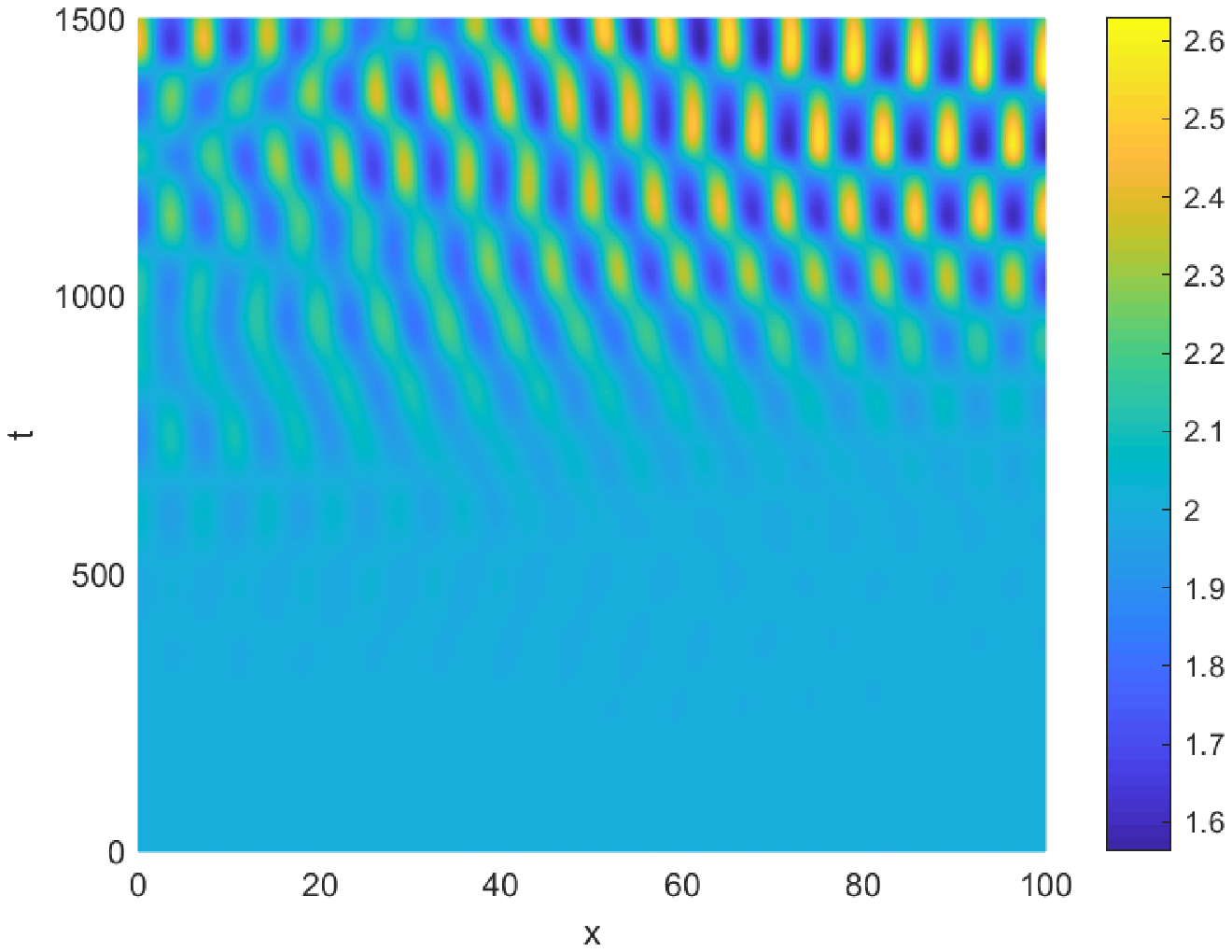,width=5cm} \par {(c) Surface profile of  $u(x,t)$.}
\end{minipage}
\begin{minipage}{0.47\textwidth}\centering
\epsfig{figure=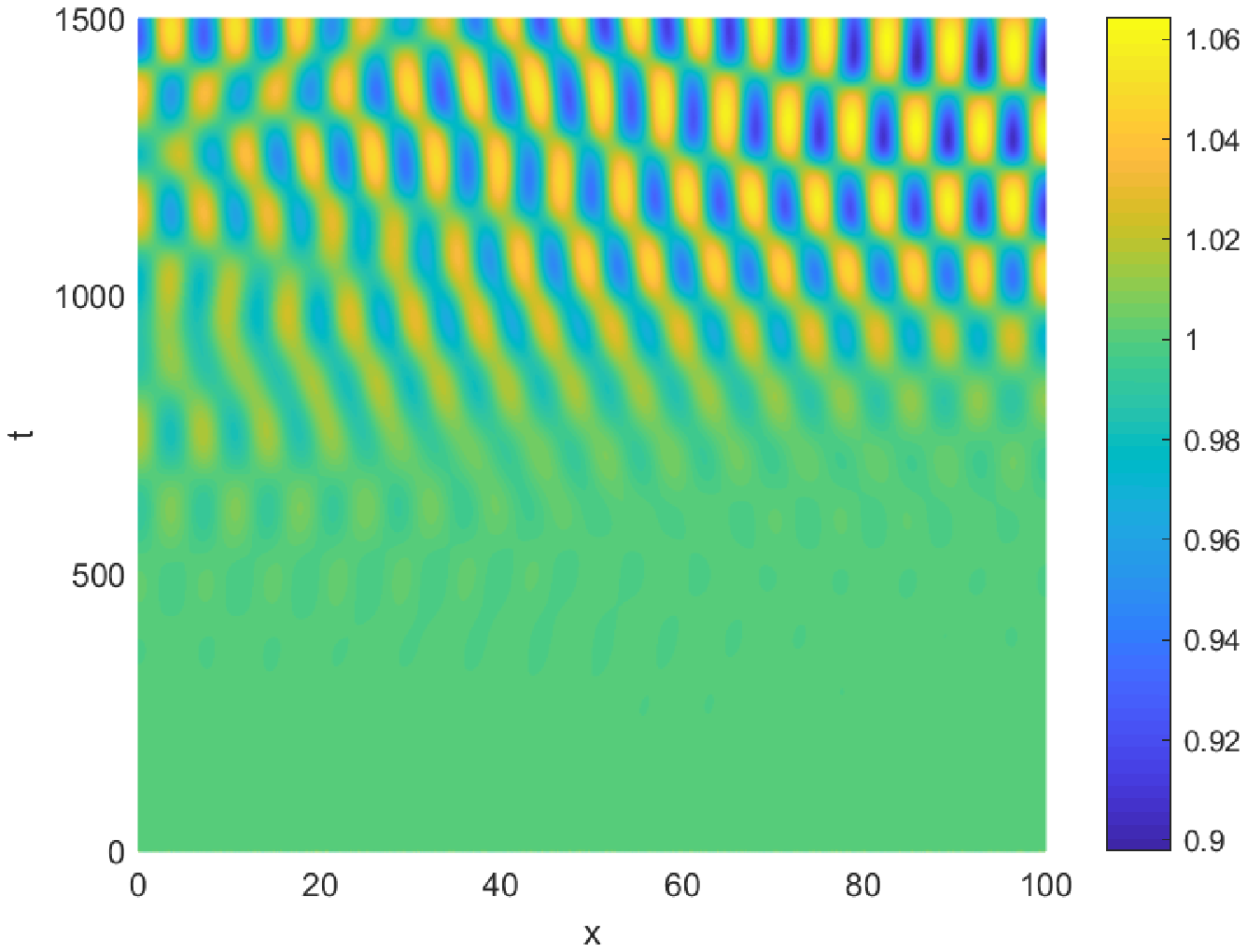,width=5cm} \par {(c) Surface profile of  $v(x,t)$.}
\end{minipage}
\end{center}
\caption{Fractional Brusselator model with randomly perturbed initial conditions (i), Example \ref{s5-eg-2}:
(a) $\alpha_1=\alpha_2=0.2$, $d=9$;
(b) $\alpha_1=\alpha_2=0.5$, $d=17$; (c) $\alpha_1=\alpha_2=0.8$, $d=23$.\label{turing_pattern2}}
\end{figure}

Figure \ref{turing_pattern3} shows the surface density plots of $u(x,t)\geq u^*$ (black)
and $u(x,t)<u^*$ (white) for the Brusselator model with  sinusoidally perturbed initial conditions (ii):
long-wavelength sinusoidally perturbations (left column) and
short-wavelength sinusoidally perturbations  (right column).
We observe the same results as shown in \cite{Henry05}, but
we use finer spatial resolution to obtain more accurate solutions.
For both long-wavelength sinusoidally perturbations and short-wavelength sinusoidally perturbations,
similar patterns are observed after $t=500$ for the same parameters $d$ and fractional
orders $\alpha_1=\alpha_2$.

\begin{figure}[!h]
\begin{center}
\begin{minipage}{0.47\textwidth}\centering
\epsfig{figure=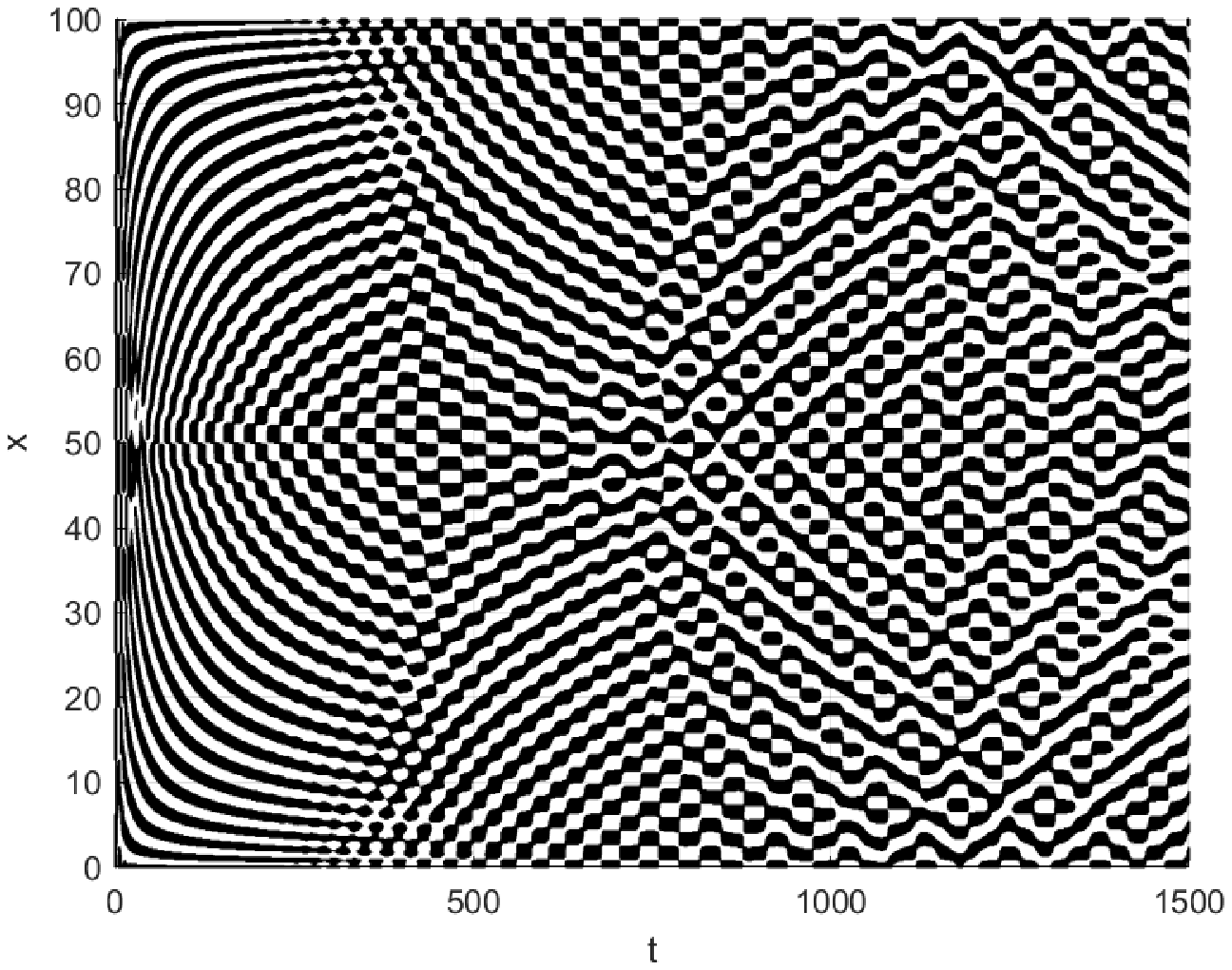,width=5.3cm} \par {(a) }
\end{minipage}
\begin{minipage}{0.47\textwidth}\centering
\epsfig{figure=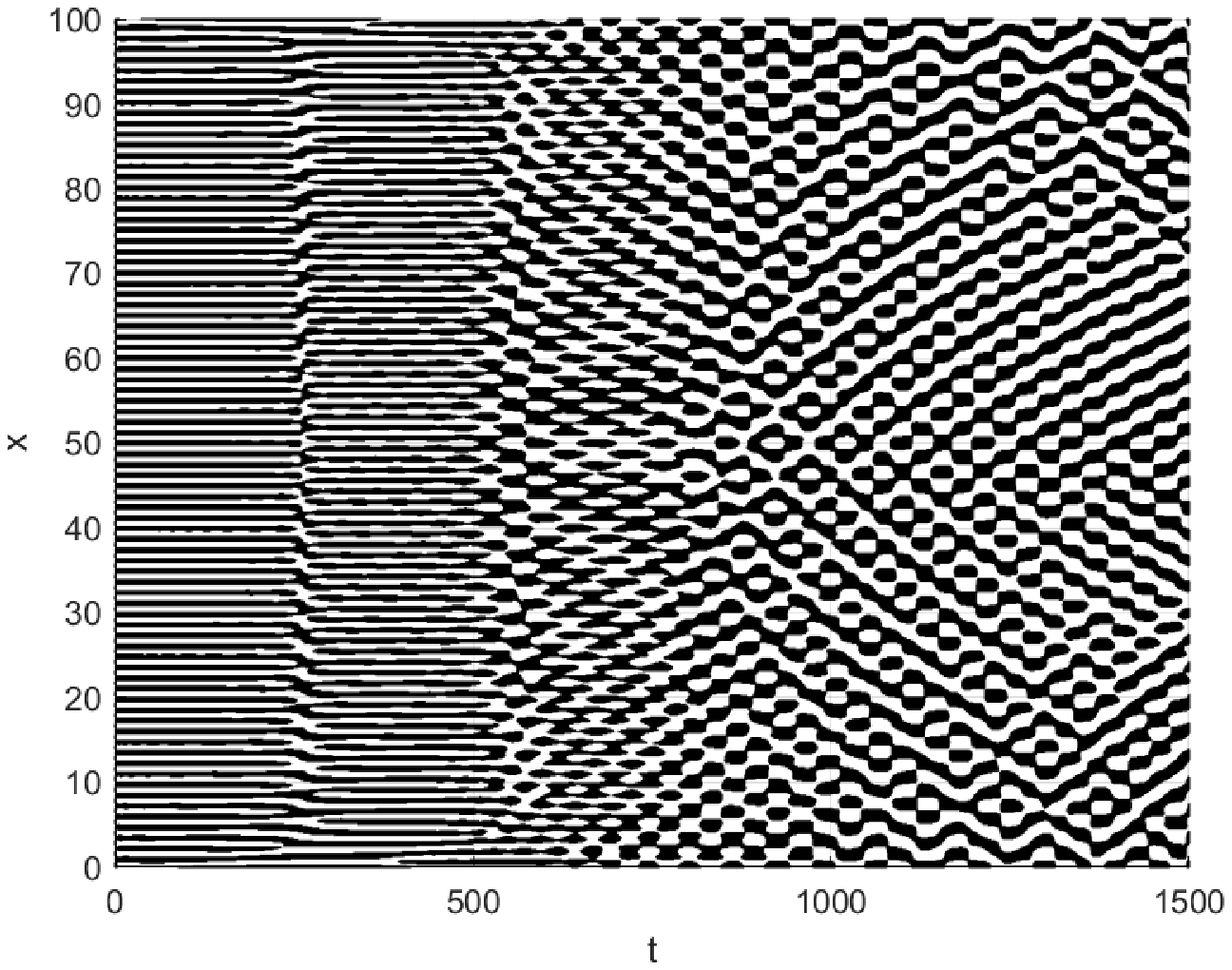,width=5.cm} \par {(a) }
\end{minipage}
\begin{minipage}{0.47\textwidth}\centering
\epsfig{figure=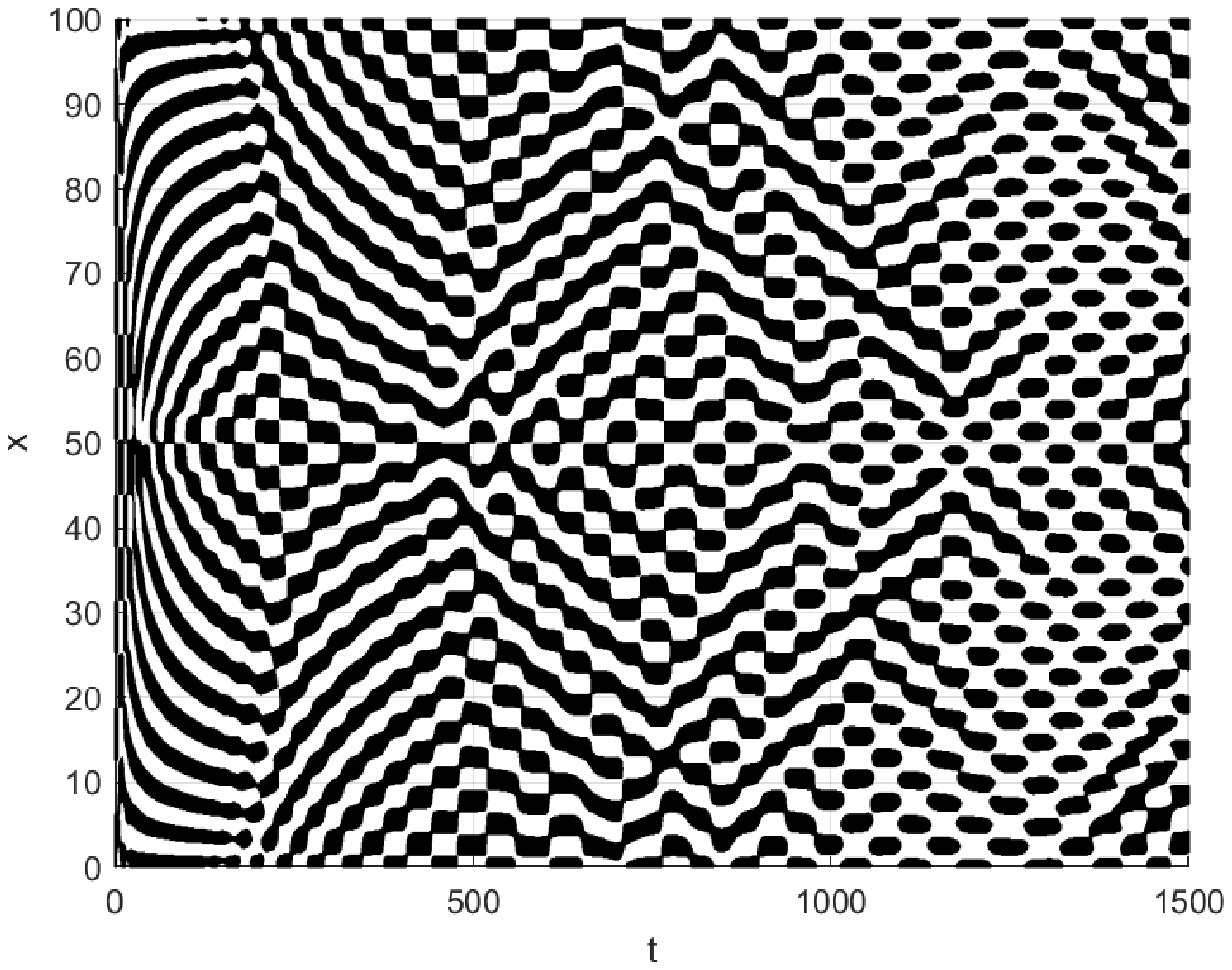,width=5cm} \par {(b) }
\end{minipage}
\begin{minipage}{0.47\textwidth}\centering
\epsfig{figure=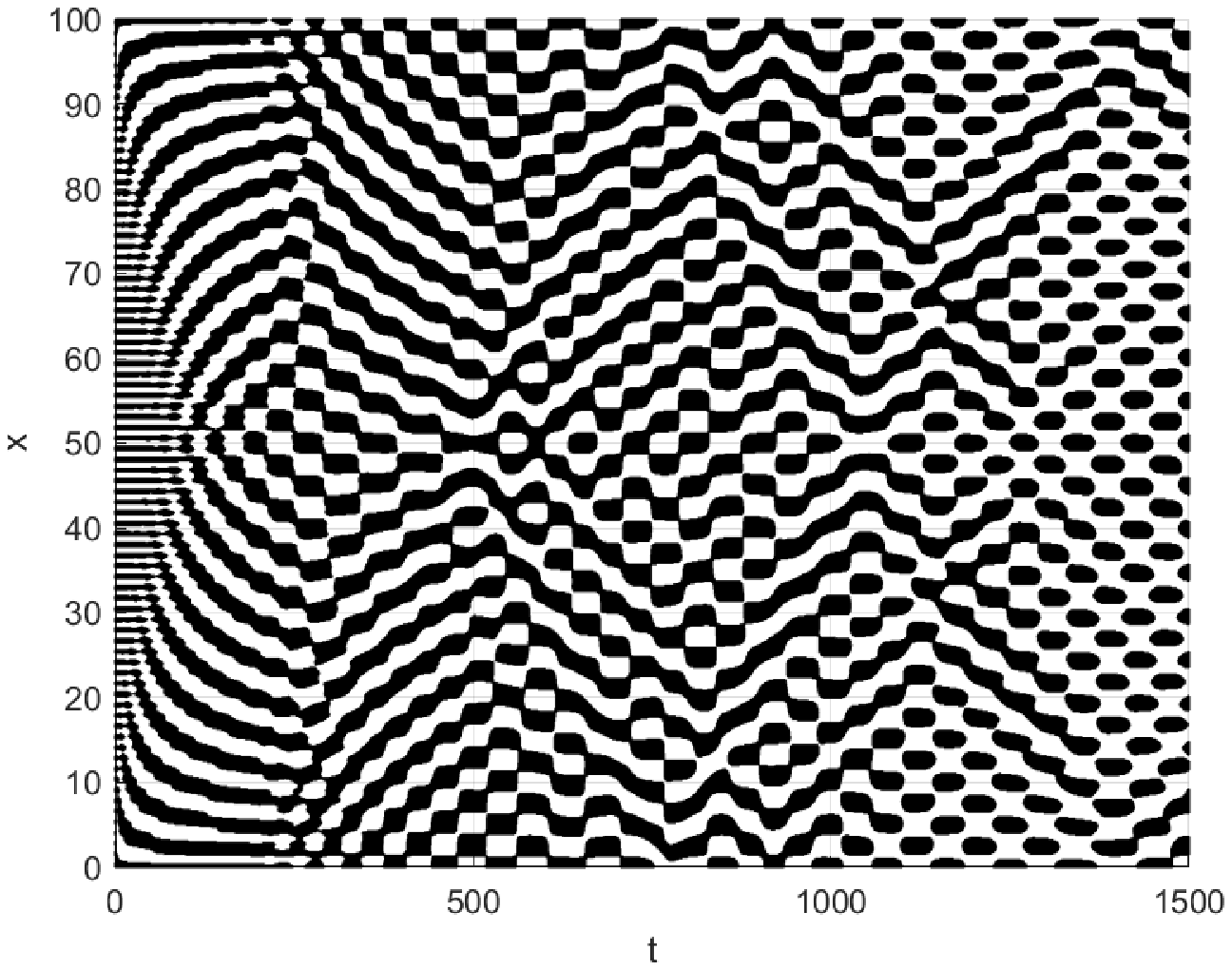,width=5cm} \par {(b) }
\end{minipage}
\begin{minipage}{0.47\textwidth}\centering
\epsfig{figure=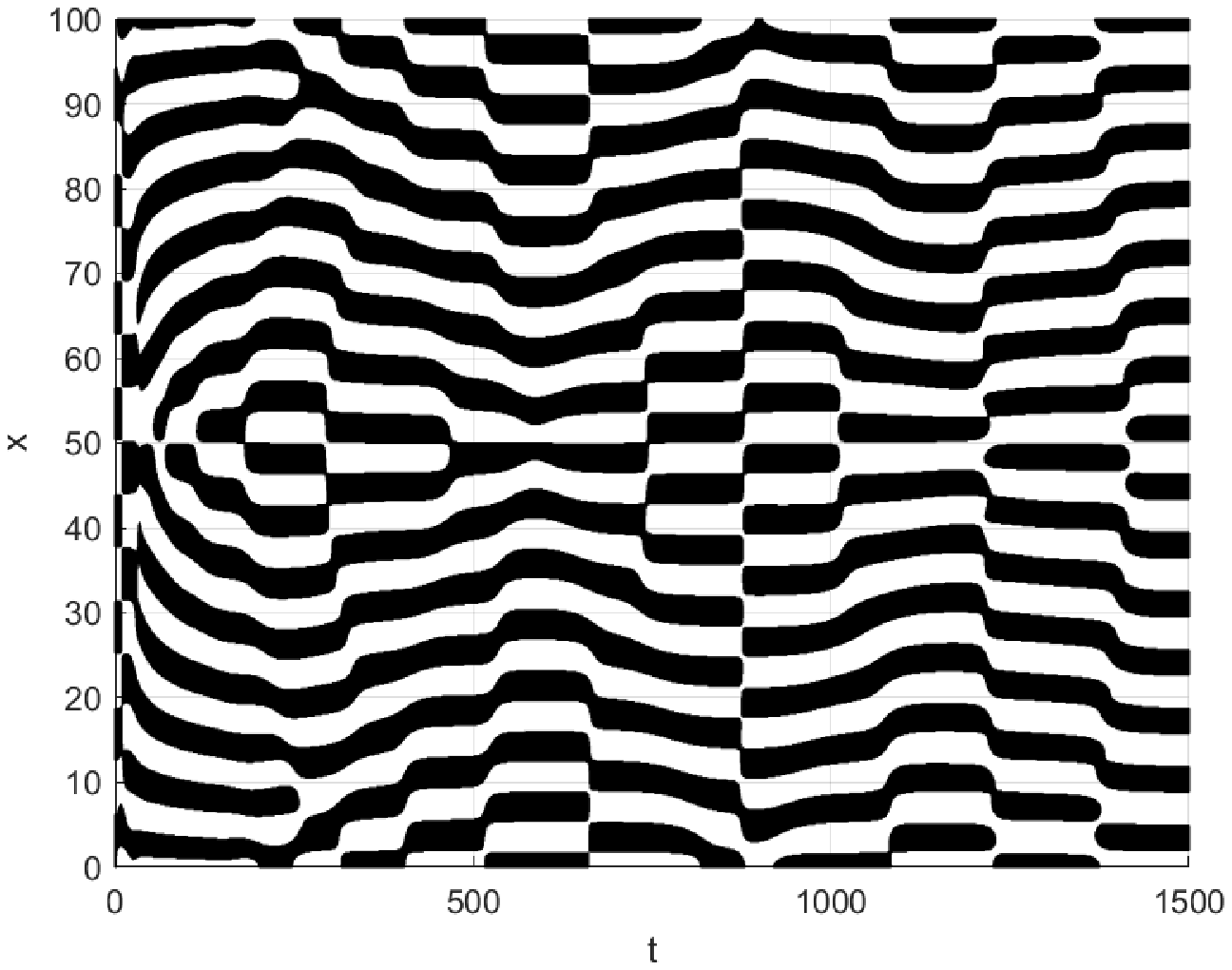,width=5cm} \par {(c) }
\end{minipage}
\begin{minipage}{0.47\textwidth}\centering
\epsfig{figure=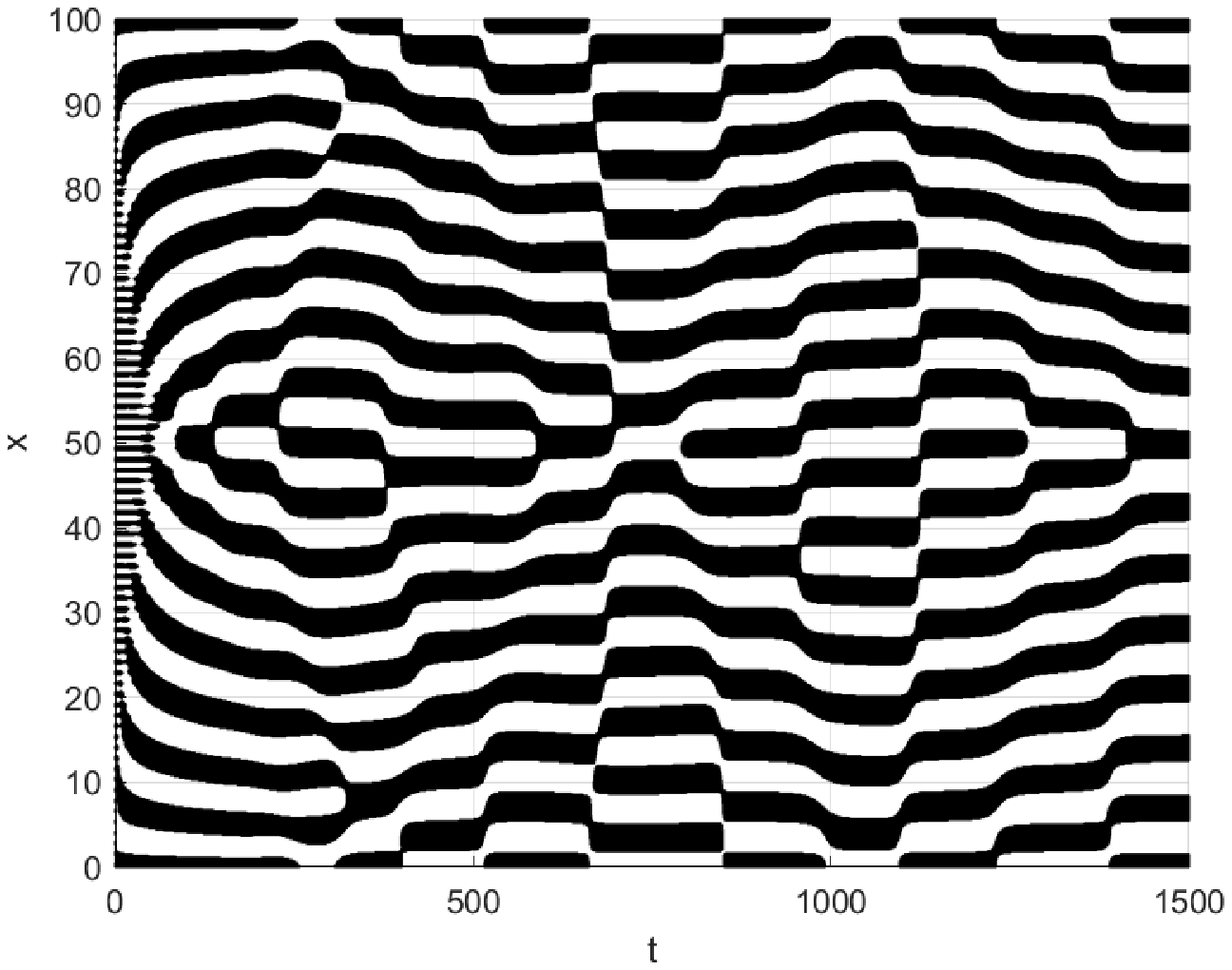,width=5cm} \par {(c) }
\end{minipage}
\end{center}
\caption{Surface density plots of fractional Brusselator model for $u(x,t)$
with long-wavelength (left column) and
short-wavelength (right column) sinusoidally perturbed initial conditions (ii),
Example \ref{s5-eg-2}:
(a) $\alpha_1=\alpha_2=0.2$, $d=9$;
(b) $\alpha_1=\alpha_2=0.5$, $d=17$; (c) $\alpha_1=\alpha_2=0.8$, $d=23$.\label{turing_pattern3}}
\end{figure}


Next, we choose different fractional orders $\alpha_1=0.5$ (anomalous subdiffusion in
the activator $u(x,t)$) and $\alpha_2=1$ (standard diffusion in the inhibitor $v(x,t)$).
In such a  case, turning-instability-induced pattern formation might occur for any
$d>0$ (see \cite{Henry05,Henry02}). We perform a number of numerical simulations of
the fractional activator-inhibitor model with anomalous diffusion in the activator
and standard diffusion in the inhibitor over a range of parameters. Sample
results are shown in Figures \ref{turing_pattern5}--\ref{turing_pattern6}.
Figure \ref{turing_pattern5} shows the surface profiles of the activator and inhibitor for
the fractional Gierer--Meinhardt model with randomly perturbed initial conditions (i).
Figure \ref{turing_pattern6} shows  the surface profiles of the activator and inhibitor of
the fractional Brusselator model with short-wavelength sinusoidally perturbed initial conditions (iii).
Obviously, for both models,  the activator and inhibitor display different fluctuations
about the homogenous steady-state solution. We obtain similar results as those obtained in
\cite{Henry05}.

\begin{figure}[!h]
\begin{center}
\begin{minipage}{0.47\textwidth}\centering
\epsfig{figure=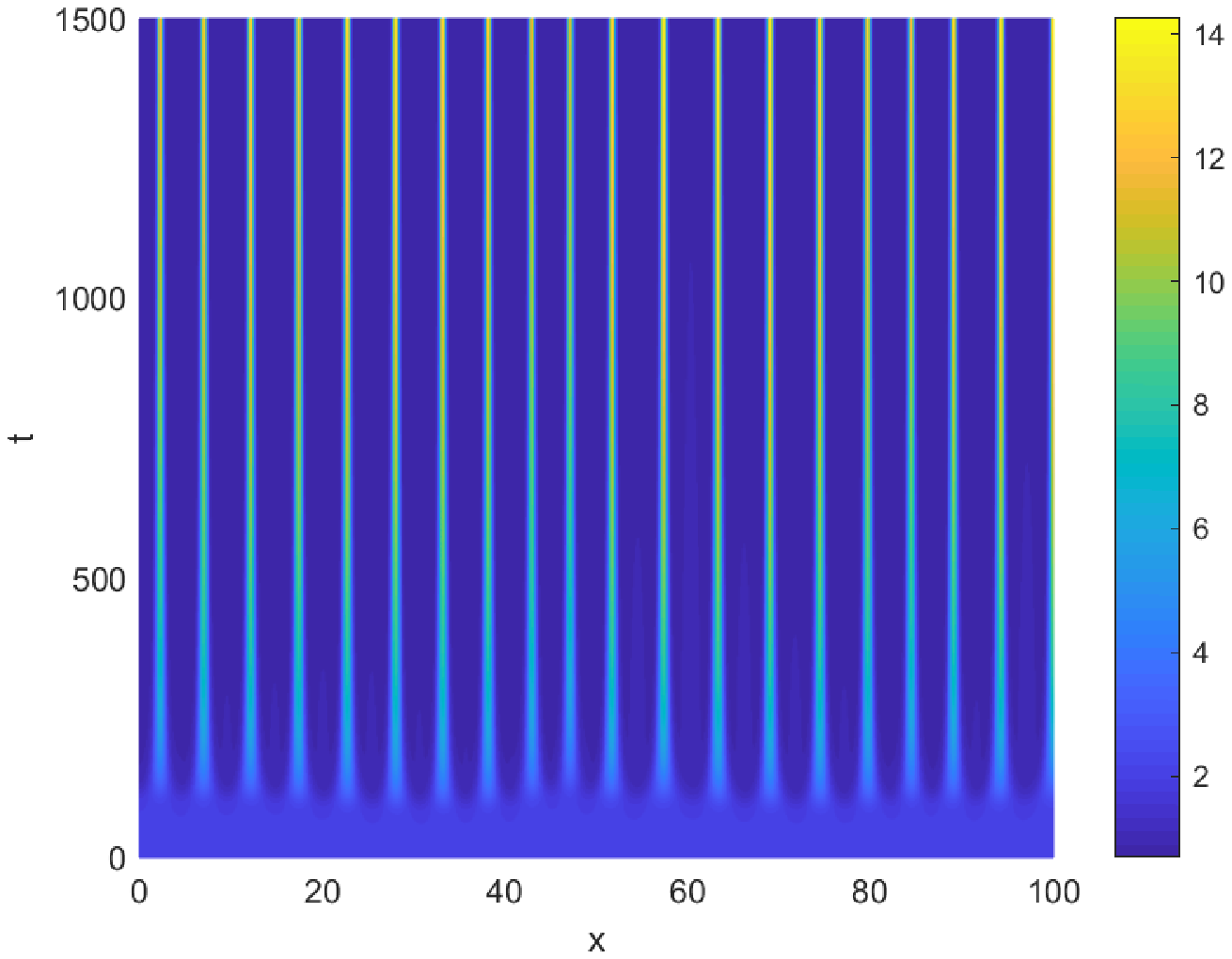,width=5cm} \par {(a) Surface  profile of  $u(x,t)$.}
\end{minipage}
\begin{minipage}{0.47\textwidth}\centering
\epsfig{figure=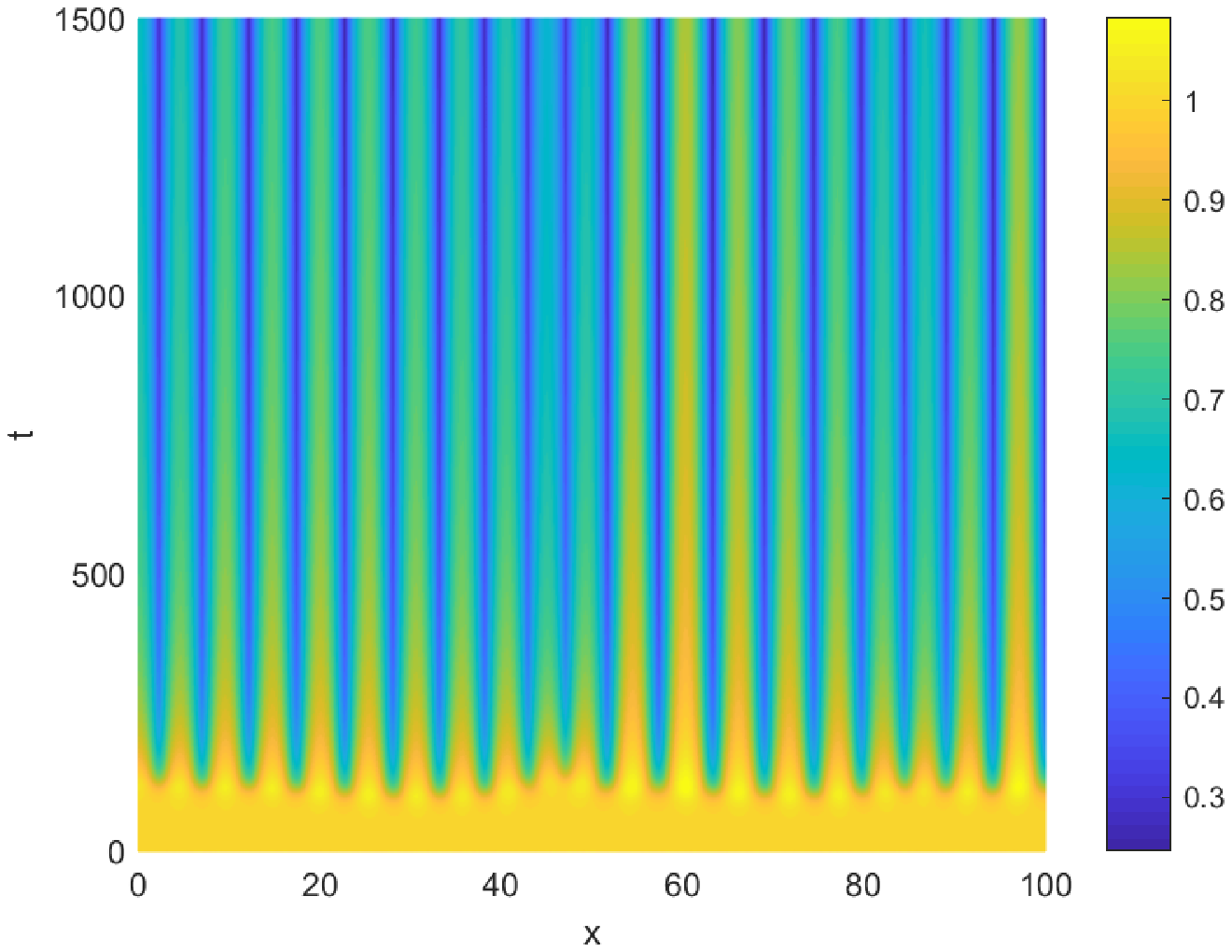,width=5cm} \par {(b) Surface  profile of  $v(x,t)$.}
\end{minipage}
\end{center}
\caption{Fractional Gierer--Meinhardt model with randomly perturbed initial conditions (i), Example \ref{s5-eg-2}: $\alpha_1=0.5$, $\alpha_2=1.0$, $d=8$.\label{turing_pattern5}}
\end{figure}

\begin{figure}[!h]
\begin{center}
\begin{minipage}{0.47\textwidth}\centering
\epsfig{figure=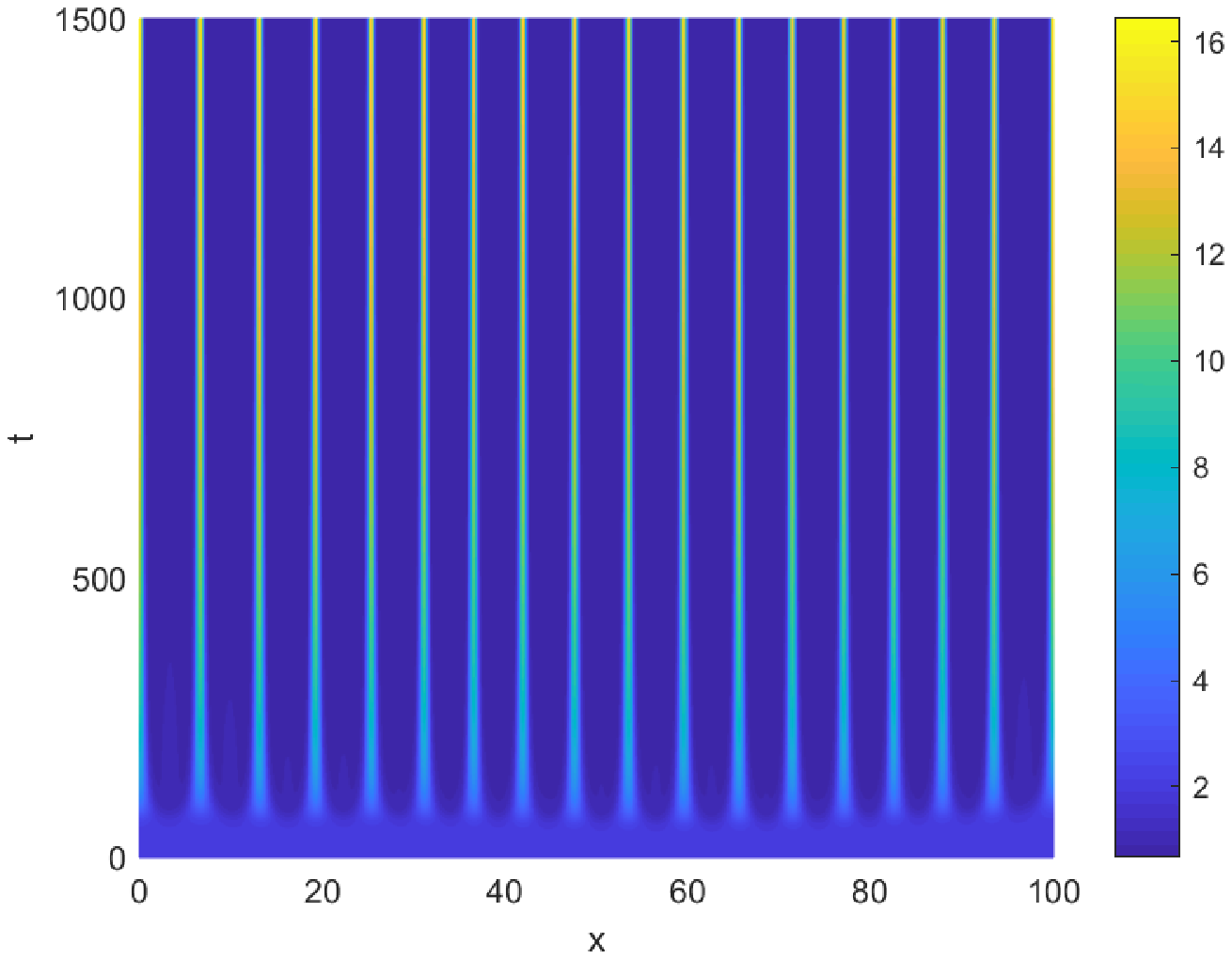,width=5cm} \par {(a) Surface  profile of  $u(x,t)$.}
\end{minipage}
\begin{minipage}{0.47\textwidth}\centering
\epsfig{figure=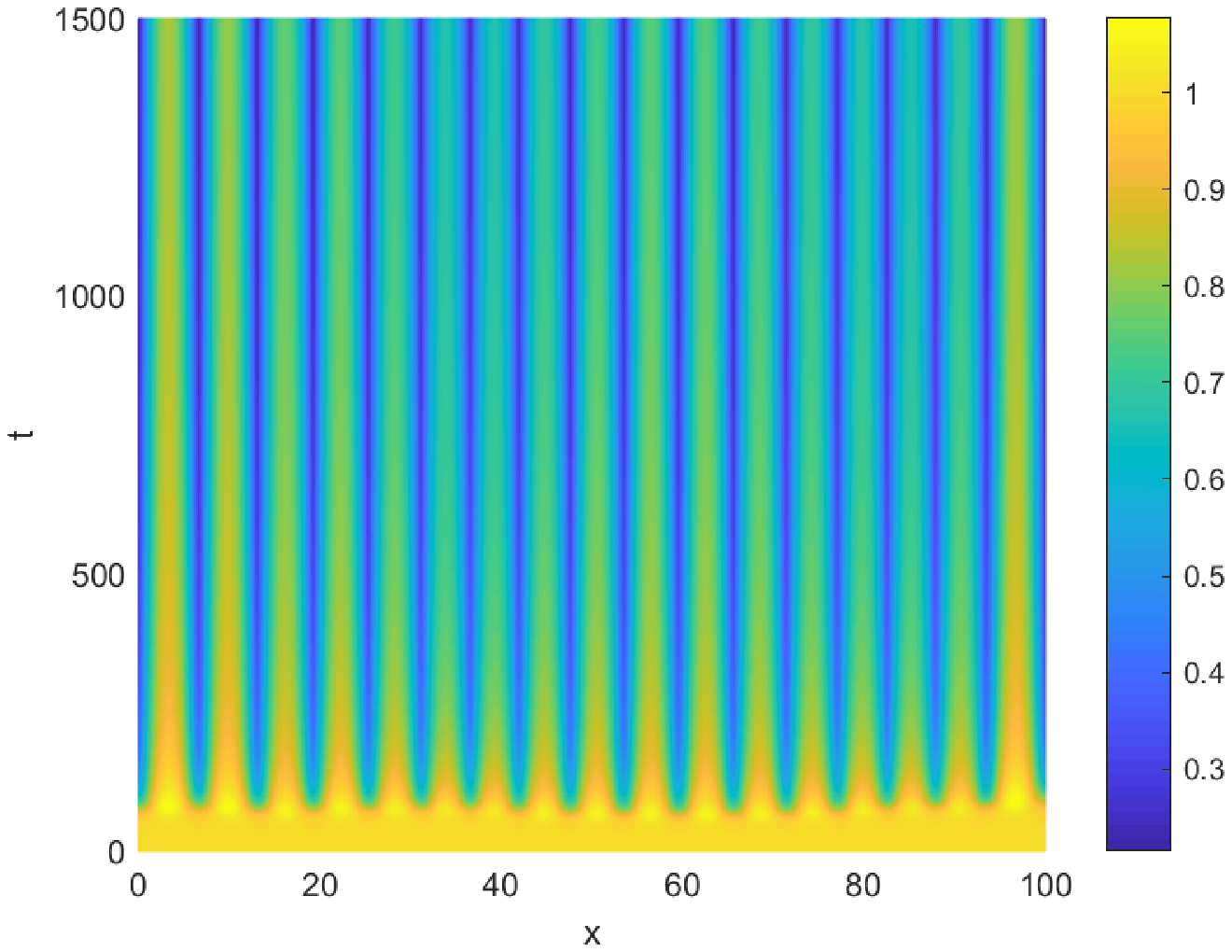,width=5cm} \par {(b) Surface  profile of  $v(x,t)$.}
\end{minipage}
\end{center}
\caption{Fractional Brusselator model with short-wavelength sinusoidally perturbed initial conditions (iii),
Example \ref{s5-eg-2}: $\alpha_1=0.5$, $\alpha_2=1.0$, $d=10$.\label{turing_pattern6}}
\end{figure}

Finally in this section, we show the efficiency and accuracy of the fast method.
Figure \ref{cputime8} (a) displays the difference $\|{}_Fu_{h}(t)-{}_Du_{h}(t)\|_{\infty}$
of the fast solution and the direct solution of the fractional Brusselator model
with long-wavelength sinusoidally perturbed initial conditions (ii),
where ${}_Fu_{h}$ is the fast solution obtained from   \eqref{sec5:eq-21-1}--\eqref{sec5:eq-22-1},
${}_Du_{h}$ is the direct method solution that is obtained from \eqref{sec5:eq-21-1}--\eqref{sec5:eq-22-1}
with ${}_FD_{\tau}^{1-\alpha,0,0,0,n}$ replaced by the direct calculation
method $D_{\tau}^{1-\alpha,0,0,0,n}$. We choose $256$ quadrature points in the trapezoidal rule
used in the fast method,  and an accuracy of   $10^{-9}$ is achieved
(more accurate results can be obtained if we increase the number of quadrature points $Q$,
but these results are not shown here).
The most obvious observation is that the computational time of the fast  method
increases linearly, while the computational cost of the direct method
increases quadratically; see Figure \ref{cputime8} (b).
For the case shown in    Figure \ref{cputime8} (b), the computational times
of the fast method and direct method are about 4000 seconds (about one hour)
and 87000 seconds (about one day and two hours), respectively.

\begin{figure}[!h]
\begin{center}
\begin{minipage}{0.47\textwidth}\centering
\epsfig{figure=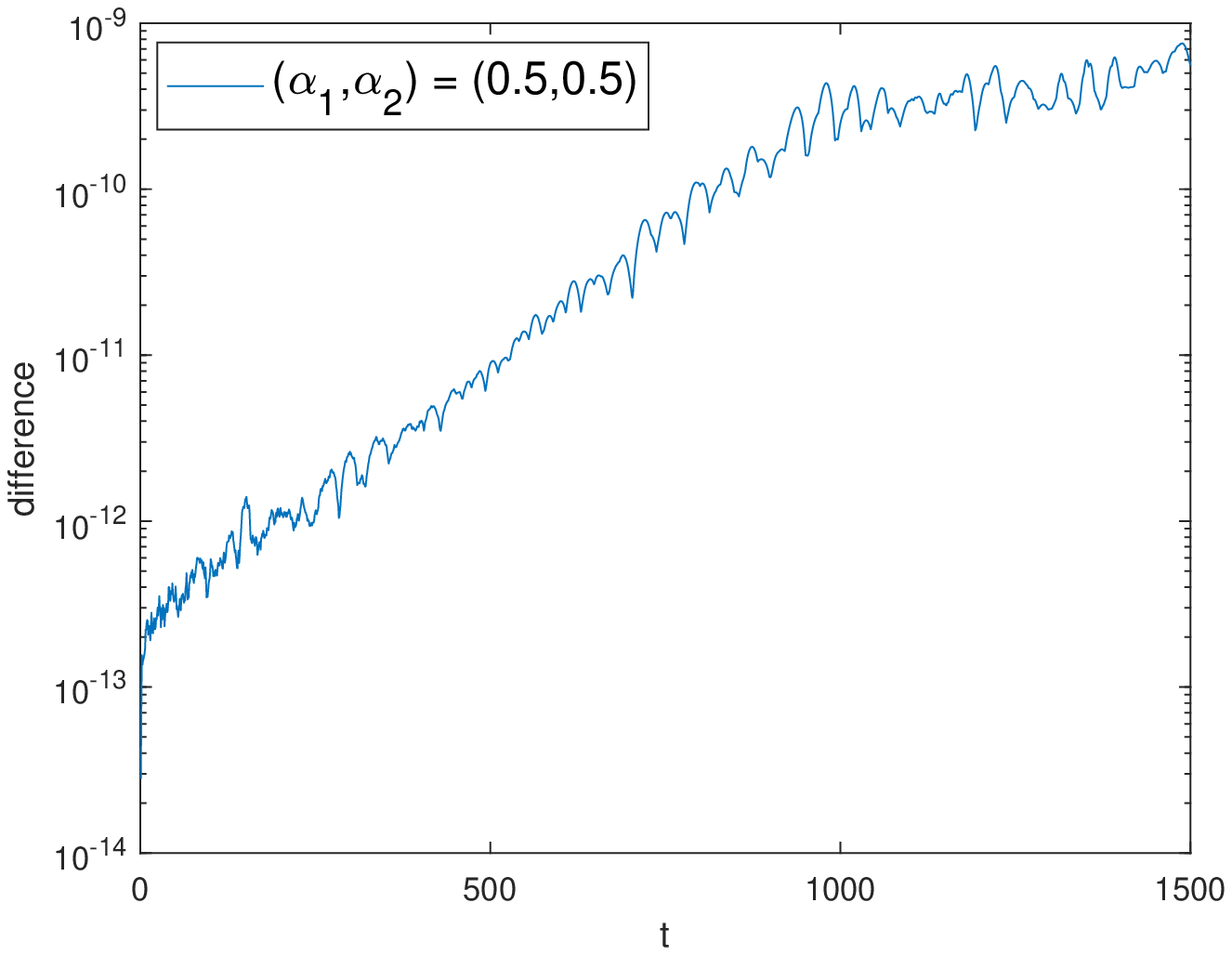,width=5cm}
\par {(a)  $\|{}_Fu_{h}(t)-{}_Du_{h}(t)\|_{\infty}$.}
\end{minipage}
\begin{minipage}{0.47\textwidth}\centering
\epsfig{figure=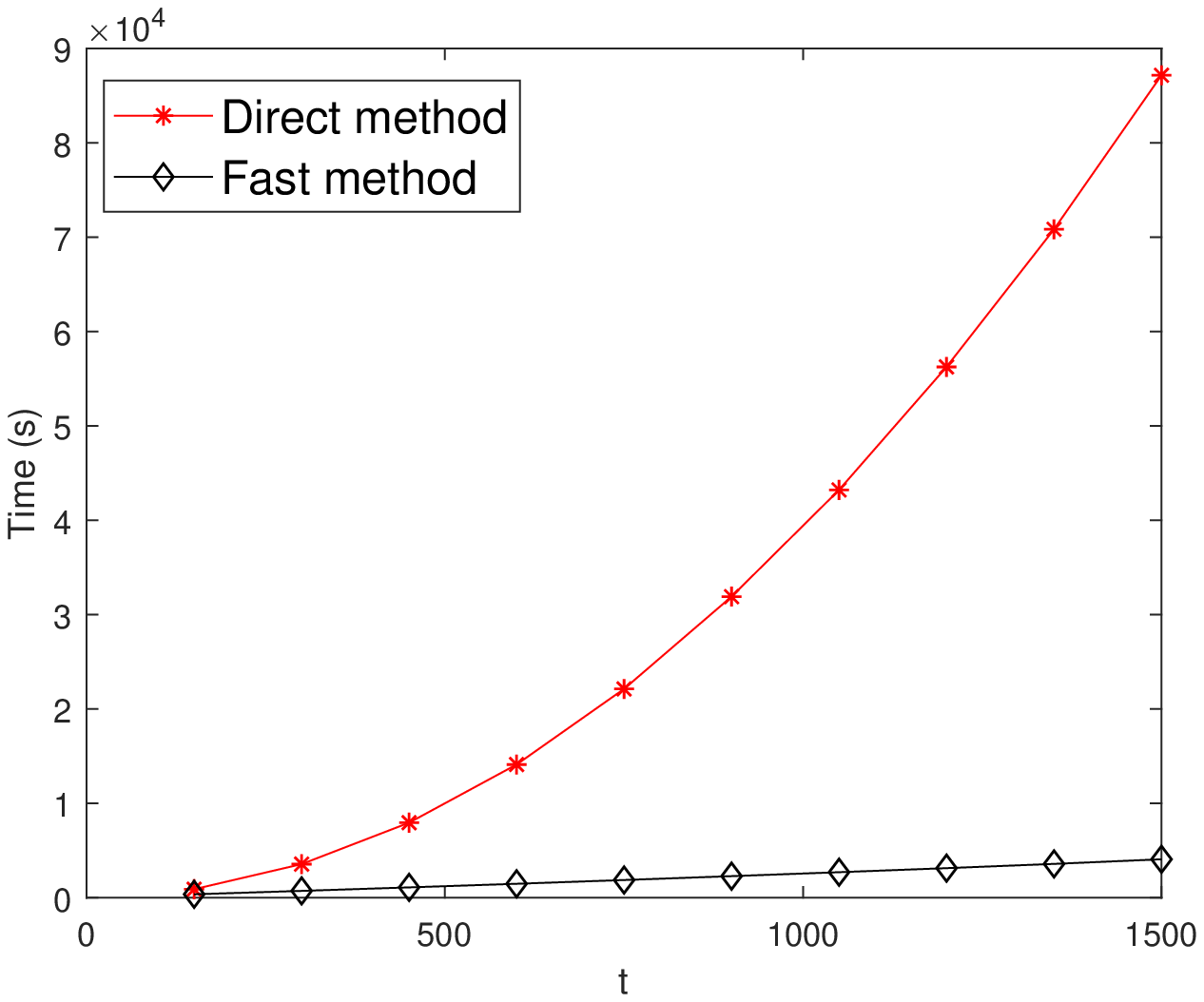,width=5cm} \par {(b) Computational time.}
\end{minipage}
\end{center}
\caption{(a) The maximum difference $\|{}_Fu_{h}(t)-{}_Du_{h}(t)\|_{\infty}$ between the direct solution
and fast solution of the fractional Brusselator model
with long-wavelength sinusoidally perturbed initial conditions (ii);
(b) the computational time of the fast method and the direct method;
Example \ref{s5-eg-2}, $\alpha_1=\alpha_2=0.5$, $d=17$.\label{cputime8}}
\end{figure}

\section{Conclusion and discussion}\label{concl}
In this work, we first prove the equivalence between the tempered fractional derivative
operator and the Hadamard finite-part integral.
The interpretation of the tempered fractional derivative in terms of the finite-part integral
makes a direct and obvious extension of Lubich's FLMMs to both the tempered fractional integral and derivative operators, which greatly simplifies the method in  \cite{ChenDeng15}.

We then propose two fast methods, Fast Method I and Fast Method II, to
approximate the discrete convolution $\sum_{j=0}^n\omega^{(\alpha,\sigma)}_{n-j}u_j$
in the considered FLMM. Both   methods are effective and efficient.
Fast Method I can be seen as a direct extension of the
fast method in \cite{ZengTBK2018} ($\sigma=0$) to the tempered fractional operator $(\sigma>0)$.
In Fast Method I, the convolution weight $\omega^{(\alpha,\sigma)}_{n}$ is represented by a contour
integral, which is approximated by a local contour quadrature for different $n$.
The use of the local approximation for approximating $\omega^{(\alpha,\sigma)}_{n}$  makes
the implementation of Fast Method I a little complicated.
Furthermore,   complex arithmetic operations are performed in Fast Method I,
which leads to slightly larger roundoff errors, see Figure \ref{eg51-relativeerror} (a).

In order to overcome the drawbacks of Fast Method I, we propose Fast Method II,
which has the following advantages.
\begin{itemize}[leftmargin=*]
  \item In Fast Method II, the convolution weight $\omega^{(\alpha,\sigma)}_{n}$ is expressed
  by an integral on the real line  instead of the contour integral in the complex plane in Fast Method I. A uniform approximation is derived to approximate this integral on the real line, which makes the implementation of Fast Method II much easier and simpler than that of Fast Method I.
  \item Only real arithmetic operations are performed in Fast Method II.
  \item In Fast Method I, an ODE of the form $y'(t)=\lambda y(t) + u(t)$ is solved by the
  backward Euler method (see also \eqref{ode2}).
  However, the coefficient $\lambda$ may have positive real part if the Talbot or hyperbolic
  contour quadrature (see, e.g., \cite{SchLopLub06,ZengTBK2018}) is applied, which may affect the stability of Fast Method I.
  We always perform a stable recurrence relation \eqref{real-int-7} in Fast Method II, which
  avoids a possible negative effect caused by the positive real part of $\lambda$ in Fast Method I.
\end{itemize}

In summary, Fast Method II outperforms Fast Method I in terms of both accuracy and efficiency, and
yields easier implementation,
which is also verified by numerical simulations in this work. Fast Method I still works well, but
the most obvious disadvantage is its complicated implementation.
The
code for numerical simulations in this paper can be found at
https://github.com/fanhaizeng/fast-method-for-fractional-operators-generating-functions.

%

\def\cprime{$'$} \def\cprime{$'$} \def\cprime{$'$} \def\cprime{$'$}

\end{document}